\newtheorem{thm}{Theorem}[section]
\newtheorem*{thm*}{Theorem} 
\newtheorem{lem}[thm]{Lemma}
\newtheorem{prop}[thm]{Proposition}
\newtheorem{cor}[thm]{Corollary}
\theoremstyle{definition}
\newtheorem{Def}[thm]{Definition}
{\newtheorem{ex}[thm]{Example}}
{\newtheorem{rem}[thm]{Remark}}
\newcommand{\N}{\ensuremath{\mathbb{N}}}
\newcommand{\C}{\ensuremath{\mathbb{C}}}
\newcommand{\Q}{\ensuremath{\mathbb{Q}}}
\newcommand{\R}{\ensuremath{\mathbb{R}}}
\newcommand{\del}{\ensuremath{\partial}}
\newcommand{\G}{\ensuremath{\mathcal{G}}}
\newcommand{\Hcal}{\ensuremath{\mathcal{H}}}
\newcommand{\GL}{\operatorname{GL}}
\newcommand{\SL}{\operatorname{SL}}
\newcommand{\SO}{\operatorname{SO}}
\newcommand{\Gm}{\mathbb{G}_m}
\newcommand{\Ga}{\mathbb{G}_a}
\newcommand{\Pcal}{\ensuremath{\mathcal{P}}}
\newcommand{\Bcal}{\ensuremath{\mathcal{B}}}
\newcommand{\Gal}{\ensuremath{\underline{\mathrm{Gal}}}}
\newcommand{\Galf}{\ensuremath{\mathrm{Gal}}}
\newcommand{\Hom}{\operatorname{Hom}}
\newcommand{\Aut}{\operatorname{Aut}}
\newcommand{\Autd}{\underline{\operatorname{Aut}}^\del}
\newcommand{\Spec}{\operatorname{Spec}}
\newcommand{\trd}{\operatorname{trdeg}}
\newcommand{\Frac}{\ensuremath{\mathrm{Frac}}}
\title{Differential Galois Groups over Laurent Series Fields}
\author{David Harbater, Julia Hartmann, and Annette Maier}
\date{January 27, 2015\\
\textit{2010 Mathematics Subject Classification.} 12H05, 20G15, 14H25, 34M50.\\
\textit{Keywords.} Picard-Vessiot theory, Patching, Linear algebraic groups, Inverse differential Galois problem, Galois descent.}
\begin{document}

\begin{abstract} 
In this manuscript, we apply patching methods to give a positive answer to the inverse differential Galois problem over function fields over Laurent series fields of characteristic zero. More precisely, we show that any linear algebraic group (i.e.\ affine group scheme of finite type) over such a Laurent series field does occur as the differential Galois group of a linear differential equation with coefficients in any such function field (of one or several variables).  
\end{abstract}
\maketitle

\section*{Introduction}
Differential Galois theory studies linear homogeneous differential equations by means of their symmetry groups, the differential Galois groups. Such a group acts on the solution space of the equation under consideration, and this furnishes it with the structure of a linear algebraic group over the field of constants of the differential field. In analogy to the inverse problem in ordinary Galois theory, an answer to the question of which linear algebraic groups occur as differential Galois groups over a given differential field provides information about the field and its extensions. 

Classically, differential Galois theory was mostly concerned with one variable function fields over the complex numbers. In that case, the inverse problem is related to the Riemann-Hilbert problem (Hilbert's 21st problem) about monodromy groups of differential equations. In fact, Tretkoff and Tretkoff (\cite{tretkoff}) showed that every linear algebraic group over ${\mathbb C}$ occurs as the differential Galois group of some equation over ${\mathbb C}(x)$, as a consequence of Plemelj's solution to the (modified) Riemann-Hilbert problem (\cite{Plemelj}; see also \cite{Anosov-Bolibrukh}). The solution to the inverse differential Galois problem over $C(x)$ for an arbitrary algebraically closed field~$C$ of characteristic zero was given in \cite{HartCrelle}, building on decades of work by other authors, e.g.\  \cite{kovacic1}, \cite{kovacic2}, \cite{singer}, \cite{mitschisinger1}, \cite{mitschisinger2}. It is worth noting that contrary to ordinary Galois theory, there seems to be no direct way of deducing this from the complex case for general groups (see \cite{singer}).

More recently, there have been results concerning differential Galois theory over non-algebraically closed fields of constants; see, e.g.\ \cite{amano-masuoka}, \cite{Andre}, \cite{Dyc}, \cite{CHP}. (There was actually an earlier attempt, in \cite{Epstein1} and \cite{Epstein2}, but the approach did not yield a full Galois correspondence and seems to have been dropped.) The differential Galois groups in this more general setting are still linear algebraic groups, but in the sense of group schemes rather than of point groups. However, only very limited results are known about the corresponding inverse problem (in particular, see \cite{Dyc}).

In this manuscript (Theorem~\ref{mainresult}), we solve the inverse problem for function fields over complete discretely valued fields of equal characteristic zero, which are Laurent series fields. While such fields are never algebraically closed, they share with the complex numbers the property of being complete with respect to a metric topology, therefore allowing local calculations involving series. This relates our approach to the classical approach in \cite{tretkoff}, although we use different techniques.  

As a general result that may also be useful in other contexts, we show that in order to solve the inverse problem over finitely generated differential fields over a field of constants $K$, it suffices to solve it over $K(x)$ with derivation $d/dx$ (Corollary~\ref{cor generalresult}). This type of approach has previously been used for groups over algebraically closed fields (e.g.\  for connected groups in \cite{mitschisinger1}) and is sometimes called the {\em Kovacic trick}.

Our approach to solving the inverse problem over rational function fields over Laurent series fields is based on a recent version of patching methods (\cite{HH}) and inspired by the use of patching methods in ordinary Galois theory (see \cite{HarbaterMSRI} for an overview).  In ordinary Galois theory, the patching machinery reduces the realization of a given finite group to the (local) realization of generating (e.g.\ cyclic) subgroups. In the differential setup, there are two additional complications. The first is that in differential Galois theory, it is not possible to patch the local Picard-Vessiot rings (which are the analogs of Galois extensions), since these are not finite as algebras. Instead, we apply patching directly to the differential equations via a factorization property related to patching; this is equivalent to patching the corresponding differential modules. The second complication is that while any finite group is generated by its cyclic subgroups, an analogous statement for linear algebraic groups is true only over algebraically closed fields: Every linear algebraic group over an algebraically closed field is generated by (a finite number of) finite cyclic groups and copies of the multiplicative and additive group. To overcome this issue, we apply the patching method after base change to a finite extension of the field of constants in such a way that the result descends to the original field. 

The advantage of our method is that the building blocks (i.e., the local differential equations for finite cyclic, multiplicative, and additive groups) are very easy to find and verify. 
Because we apply the patching machinery only in a very simple case in which the process can be described explicitly, we do not require the reader to be familiar with \cite{HH}.
We note that a related but somewhat different strategy, to handle a special case of the problem, was sketched by the second author in \cite{Hartmann-OW}.

We expect that the results of this manuscript can be applied to solve the
inverse differential Galois problem for function fields over constant
fields other than Laurent series, for example $p$-adic fields (this is
work in progress).

\medskip

{\bf Organization of the manuscript:} Section~1 lists some basic facts about Picard-Vessiot theory over non-algebraically closed fields of constants. It also contains auxiliary results about the linearization of the differential Galois group, a statement and consequences of the Galois correspondence, and descent results for Picard-Vessiot rings.
Section~2 describes the patching setup for differential equations, proves the main patching result and gives some examples. Section~3 is concerned with the generation of linear algebraic groups by ``simple'' subgroups and with finding differential equations which have those groups as differential Galois groups, the so-called {\em building blocks} for patching. Finally, Section~4 solves the inverse problem over rational function fields by combining the results of the previous two sections. It also contains a result stating that solving the inverse problem over rational function fields implies a solution over arbitrary finitely generated differential fields (over the same field of constants). Combining this with the result obtained for rational function fields gives the main Theorem~\ref{mainresult}.

\medskip

{\bf Acknowledgments:}
The authors wish to thank Tobias Dyckerhoff and Michael Wibmer for helpful discussions. We also thank Michael Singer for comments related to the contents of this manuscript.
\begin{section}{Picard-Vessiot theory}
Throughout this manuscript, all fields are assumed to be of characteristic zero. If $F$ is a field, we write $\bar{F}$ for its algebraic closure. If $R$ is an integral domain, we write $\Frac(R)$ for its field of fractions. If $R$ is a differential ring, we write $C_R$ for its ring of constants. If $F$ is a differential field, then $C_F$ is a field which is algebraically closed in~$F$. 

We first record some facts from the Galois theory of differential fields with arbitrary fields of constants. We refer to \cite{Dyc} for details. 
 
  Let $(F,\del)$ be a differential field with field of constants $C_F=K$. Given a matrix $A\in F^{n\times n}$ and a differential ring extension $R/F$, a \textbf{fundamental solution matrix for $A$} is a matrix $Y \in \GL_n(R)$ which satisfies the differential equation $\del(Y)=AY$. A differential ring extension $R/F$ is called a \textbf{Picard-Vessiot ring} for $A \in F^{n\times n}$ if it satisfies the following conditions: The ring of constants of $R$ is $C_R=K$, there exists a fundamental solution matrix $Y \in \GL_n(R)$, $R$ is generated by the entries of $Y$ and $Y^{-1}$ (we write $R=F[Y,Y^{-1}]$), and $R$ is differentially simple (i.e., has no nontrivial ideals which are stable under $\del$). Differential simplicity implies that a Picard-Vessiot ring is an integral domain. Its fraction field  is called a \textbf{Picard-Vessiot extension}. Note that since $K$ is not assumed to be algebraically closed, Picard-Vessiot rings need not exist (e.g.\ \cite{seidenberg}) and might not be unique. The \textbf{torsor theorem} states that if $R$ is a Picard-Vessiot ring, there is an $R$-linear isomorphism of differential rings $R\otimes_F R\cong R\otimes_K C_{R\otimes_F R}$. 
  
The \textbf{differential Galois group} of a Picard-Vessiot ring $R/F$ is defined as the group functor $\underline{\Aut}^\del(R/F)$ from the category of $K$-algebras to the category of groups that sends a $K$-algebra $S$ to the group $\Aut^\del(R\otimes_K S/F\otimes_K S)$ of differential automorphism of $R\otimes_K S$ that are trivial on $F\otimes_KS$. This functor is represented by the $K$-Hopf algebra $C_{R\otimes_F R}=K[Y\otimes Y^{-1}, Y^{-1}\otimes Y]$. We conclude that the differential Galois group of $R/F$ is an affine group scheme of finite type over $K$ which is necessarily (geometrically) reduced since the characteristic is zero (\cite{Oort}; see also \cite{Cartier}); i.e.\ it is a \textbf{linear algebraic group over $K$} with coordinate ring $K$-isomorphic to $C_{R\otimes_F R}$. Moreover, the torsor theorem asserts that the affine variety $\operatorname{Spec}(R)$ defined by a Picard-Vessiot ring $R$ with Galois group $G$ is a $G$-torsor and  $R\otimes_F \bar{F} \cong  C_{R\otimes_F R}\otimes_K \bar{F}$, which implies $\operatorname{trdeg} (\Frac(R)/F) = \operatorname{dim}(G)$. It is immediate from the definitions that if $K'/K$ is an algebraic \textbf{extension of constants}, then $R\otimes_K K'$ is a Picard-Vessiot ring over $F\otimes_K K'$ whose differential Galois group is the base change of the differential Galois group of $R/F$ from $K$ to $K'$.  We remark that the definition of the differential Galois group requires the use of a Picard-Vessiot ring rather than a Picard-Vessiot extension; for this reason most statements in this manuscript are phrased in terms of a Picard-Vessiot ring.\\[-8pt]
  
When constructing Picard-Vessiot rings, we will frequently use the following well-known criterion (\cite[Corollary 2.7]{Dyc}).

\begin{prop}\label{existencePVR}
Let $L/F$ be an extension of differential fields with constants $C_L=C_F$ and let $A \in F^{n\times n}$. Assume that there exists a fundamental solution matrix $Y \in \GL_n(L)$ for $A$, i.e., $\del(Y)=AY$. Then $R=F[Y,Y^{-1}]\subseteq L$ is a Picard-Vessiot ring for $A$. 
\end{prop}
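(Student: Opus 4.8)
The plan is to verify the four defining conditions of a Picard--Vessiot ring for $R=F[Y,Y^{-1}]$ with $K=C_F$, three of which are immediate. By construction $R$ is generated over $F$ by the entries of $Y$ and $Y^{-1}$; the matrix $Y$ lies in $\GL_n(R)$ and satisfies $\del(Y)=AY$ by hypothesis, so it is a fundamental solution matrix already inside $R$; and since $R\subseteq L$ with $L$ a field, $R$ is an integral domain. The ring of constants is handled by sandwiching: $K=C_F\subseteq C_R$ because $F\subseteq R$ and the elements of $K$ are $\del$-constant, while $C_R\subseteq C_L=C_F=K$ because $R\subseteq L$ and the derivation of $R$ is the restriction of that of $L$; hence $C_R=K$. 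The only condition with real content is differential simplicity, and that is where the hypothesis $C_L=C_F$ enters decisively.

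For differential simplicity, let $I\neq0$ be a differential ideal of $R$; the goal is $I=R$. I would run a shortest-length descent. After writing elements of $R$ as $F$-linear combinations of monomials in the entries of $Y$ and $Y^{-1}$ and fixing an $F$-basis of $R$ among such monomials, I would choose a nonzero $a\in I$ using the fewest basis monomials and normalize one coefficient to $1$. Applying $\del$ (computing derivatives of monomials via $\del(Y)=AY$ and $\del(Y^{-1})=-Y^{-1}A$) together with multiplication by suitable elements of $F$ produces further elements of $I$ that are strictly shorter, hence zero by minimality. The resulting relations exhibit a ratio of two of the occurring monomials as a nonzero $\del$-constant of $R$; since $C_R=K\subseteq F$, this ratio lies in $F$, so two monomials are $F$-proportional, contradicting minimality unless $a$ consists of a single monomial. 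But a single monomial in the entries of $Y$ and $Y^{-1}$ is a unit of $R$, whence $I=R$.

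The step I expect to be the main obstacle is precisely this descent, carried out rigorously in the presence of the algebraic relations among the entries of $Y$ and $Y^{-1}$: ``length'' must be measured against a fixed monomial $F$-basis, and for $n\geq2$ the derivation does not act diagonally on monomials, so producing strictly shorter elements and extracting a genuine element of $C_R$ requires careful bookkeeping. The conceptual principle underlying everything is the dichotomy that a proper nontrivial differential ideal would manufacture a constant outside $C_F$, which the hypothesis $C_R=C_F$ forbids. Once differential simplicity is established, all four conditions hold and $R$ is a Picard--Vessiot ring for $A$ by definition.
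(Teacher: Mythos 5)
First, note that the paper does not prove this proposition at all: it is quoted as a well-known criterion with a citation to [Dyc, Cor.~2.7], so your attempt must be measured against the standard proof behind that citation. Your reductions of the easy conditions are fine ($R=F[Y,Y^{-1}]$ by construction, $Y\in\GL_n(R)$ with $\del(Y)=AY$, $R$ a domain inside $L$, and $C_F\subseteq C_R\subseteq C_L=C_F$). The problem is that differential simplicity -- the only condition with content -- is exactly the step you leave open. Your shortest-length descent, run directly in $R$ with $F$-coefficients on monomials in the entries of $Y$ and $Y^{-1}$, does not work for $n\geq 2$: since $\del(y_{ij})=\sum_k a_{ik}y_{kj}$, differentiating a monomial replaces each factor by an $F$-linear combination of \emph{other} entries, so $\del$ of an element supported on $r$ basis monomials need not be supported on those monomials, and no normalization produces a strictly shorter element of the ideal. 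You flag this as ``careful bookkeeping,'' but it is not bookkeeping; it is the entire difficulty, and no amount of reordering fixes it because there is no filtration of $R$ on which $\del$ acts triangularly. In addition, your endgame is false as stated: a single monomial in the entries of $Y$ and $Y^{-1}$ is generally \emph{not} a unit of $R$ (already in the paper's Example~1.5, $R=F[y]$ with $Y=\left(\begin{smallmatrix}1&y\\0&1\end{smallmatrix}\right)$, the monomial $y$ is a nonunit; likewise a single entry $y_{ij}$ is a nonunit in the generic case). In correct arguments of this type the minimal element is shown to be a nonzero \emph{constant}, hence an element of $K^\times$, and that is what makes it a unit.

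The standard proof (the one in [Dyc]) repairs precisely this defect by first changing variables so that the algebra generators become constants, and this is where the hypothesis $C_L=C_F$ genuinely enters. One passes to $D=L\otimes_F R$ and sets $Z=(Y\otimes 1)^{-1}(1\otimes Y)$; a direct computation gives $\del(Z)=0$, and $D$ is generated over the differential field $L\otimes 1$ by the constant entries of $Z$ and $\det(Z)^{-1}$. In that ring your descent is valid, because relative to a $K$-basis of the constant subalgebra $B=K[Z,\det(Z)^{-1}]$ the derivation acts only on the $L$-coefficients (one also needs the lemma that constants which are $K$-linearly independent remain $L$-linearly independent, proved by the same minimal-length trick). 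This shows every differential ideal of $D$ is extended from an ideal of $B$. Given a nonzero proper differential ideal $I\subseteq R$, its extension $L\otimes_F I$ is proper (faithful flatness) and nonzero, hence comes from a proper nonzero ideal $J_0\subseteq B$; choosing a maximal ideal of $B$ containing $J_0$ and using Zariski's lemma, one obtains a differential $F$-homomorphism $\psi\colon R\to L\otimes_K K'$ ($K'/K$ finite) killing $I$, with $\psi(Y)=YM$ for some $M\in\GL_n(K')$, and a transcendence-degree comparison forces $\ker\psi=0$, a contradiction. So your guiding principle (a proper differential ideal would manufacture a forbidden constant) is the right one, but the proof requires the tensor trick to make it rigorous, and as written your argument has a genuine gap at its central step together with an incorrect final step.
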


The next proposition gives a criterion to determine which elements of a Picard-Vessiot extension are contained in a Picard-Vessiot ring. Elements satisfying the condition in the proposition are also called {\em differentially finite}.

\begin{prop}\label{differentiallyfinite}
Let $R/F$ be a Picard-Vessiot ring for some matrix $A$. Then an element $a \in \Frac(R)$ lies in $R$ if and only if $a,\del(a),\del^2(a),\dots$ span a finite dimensional $F$-vector space.  
\end{prop}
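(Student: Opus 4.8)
The plan is to prove the two implications separately, the forward one being elementary and the converse being the crux.

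For the direction ``$a\in R\Rightarrow$ differentially finite'', I would exploit that $R=F[Y,Y^{-1}]$ is built from the entries of a single fundamental matrix. Let $V_0\subseteq R$ be the finite-dimensional $F$-span of $1$, the entries $Y_{ij}$, and the entries $(Y^{-1})_{ij}$. Since $\del(Y)=AY$ and $\del(Y^{-1})=-Y^{-1}A$ with $A\in F^{n\times n}$, the space $V_0$ is stable under $\del$. By the Leibniz rule, the $F$-span $V_0^{(d)}$ of all products of at most $d$ elements of $V_0$ is then also $\del$-stable, and it is finite-dimensional. As any $a\in R$ lies in some $V_0^{(d)}$, the span of $a,\del(a),\del^2(a),\dots$ is contained in $V_0^{(d)}$ and hence finite-dimensional.

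For the converse, suppose $V:=\operatorname{span}_F\{\del^i(a):i\ge 0\}$ is finite-dimensional; I want to show $a\in R$. The first structural input is that $R$ is normal: by the torsor theorem, $\Spec(R)$ is a torsor under the differential Galois group $G$, which is smooth because the characteristic is zero; since smoothness descends along the faithfully flat base change to $\bar F$ (over which the torsor becomes $G_{\bar F}$), $\Spec(R)$ is smooth over $F$ and $R$ is a regular, hence normal, Noetherian domain. Consequently $R=\bigcap_{\mathfrak p}R_{\mathfrak p}$, the intersection over all height-one primes, with each $R_{\mathfrak p}$ a discrete valuation ring; so it suffices to prove $v_{\mathfrak p}(a)\ge 0$ for every such $\mathfrak p$. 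The second structural input is that $R$ is differentially simple, so no height-one prime $\mathfrak p$ can be a differential ideal; localizing, $\del(\mathfrak p)\not\subseteq\mathfrak p$ forces $\del(\pi)\notin\mathfrak p R_{\mathfrak p}$, i.e.\ $\del(\pi)$ is a unit in $R_{\mathfrak p}$ for a uniformizer $\pi$.

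These two facts combine in a pole-order computation that is the heart of the argument. For $x\in\Frac(R)$ with $v_{\mathfrak p}(x)=-k<0$, writing $x=\pi^{-k}u$ with $u$ a unit gives $\del(x)=-k\pi^{-k-1}\del(\pi)u+\pi^{-k}\del(u)$; since $\del(\pi)$ is a unit and $k\ne 0$ (here characteristic zero is essential), the first term dominates and $v_{\mathfrak p}(\del(x))=-(k+1)$. Thus, if $a$ had a pole at some $\mathfrak p$, say $v_{\mathfrak p}(a)=-k_0$ with $k_0\ge 1$, then $v_{\mathfrak p}(\del^i(a))=-(k_0+i)$ would be strictly decreasing, so the elements $\del^i(a)$ would have pairwise distinct $\mathfrak p$-valuations and therefore be $F$-linearly independent (every nonzero element of $F$ is a $\mathfrak p$-unit), contradicting $\dim_F V<\infty$. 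Hence $a$ has no poles and lies in $R$. I expect the main obstacle to be assembling the geometric input cleanly --- especially deducing normality of $R$ from the torsor theorem --- and verifying that differential simplicity really does make $\del$ transverse to every prime divisor; once these are in place, the valuation estimate is a short computation.
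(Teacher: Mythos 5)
Your proof is correct, and it takes a genuinely different route from the paper's on both halves. For the forward direction, the paper does not argue directly: it base-changes to $R\otimes_K\bar K$ over $L:=F\otimes_K\bar K$ (legitimate since $\bar K/K$ is an algebraic extension of constants), invokes the known statement for algebraically closed constants \cite[Cor.~1.38]{singervanderput}, and then descends the linear dependence to $F$ via the dimension count $\dim_F(V)=\dim_L(V\otimes_F L)$; your Leibniz argument on $R=F[Y,Y^{-1}]$ proves this direction directly, with no base change, and is essentially the proof of the statement the paper cites. For the converse, the paper again defers to van der Put--Singer, and the argument there is the short ``denominator ideal'' trick: with $V=\operatorname{span}_F\{\del^i(a)\}$, the set $I=\{r\in R \,:\, rV\subseteq R\}$ is a nonzero ideal of $R$, and it is a differential ideal because $V$ is $\del$-stable and $\del(r)v=\del(rv)-r\del(v)\in R$ for $r\in I$, $v\in V$; differential simplicity then gives $I=R$, hence $a\in V\subseteq R$. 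Your route instead extracts normality of $R$ from the torsor theorem (smoothness of $G$ in characteristic zero descends along $F\to\bar F$), writes $R=\bigcap_{\mathfrak p} R_{\mathfrak p}$ over height-one primes, and uses differential simplicity only to make $\del(\pi)$ a unit in $R_{\mathfrak p}$, so that $\del$ strictly increases pole order and an actual pole of $a$ would produce derivatives $\del^i(a)$ of pairwise distinct valuation, hence $F$-linearly independent (nonzero elements of $F$ being $\mathfrak p$-units). Every step of this is sound --- in particular your unit claim for $\del(\pi)$ is the correct contrapositive of $\del(\mathfrak p)\subseteq\mathfrak p$ --- but it purchases the conclusion with much heavier input (regularity of torsors, Serre-type intersection over prime divisors) than the one-line ideal argument, which needs nothing beyond differential simplicity. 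What your version buys in return: it is fully self-contained, it works uniformly over an arbitrary constant field with no reduction to $\bar K$ anywhere, and it yields extra quantitative information, namely the pole-order growth $v_{\mathfrak p}(\del^i(a))=-(k_0+i)$ at every prime divisor that is not $\del$-stable.
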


\begin{proof}
In the case that $C_F$ is algebraically closed, this is a well-known statement (see \cite[Cor.~1.38]{singervanderput}), which can be applied to $R\otimes_K \bar{K}$ over the field $L:=F \otimes_K{\bar{K}}$.  Here $R\otimes_K \bar{K}$ is a Picard-Vessiot ring since ${\bar K}/K$ is an algebraic extension of constants. Let $a \in R \subseteq R\otimes_K \bar{K}$. Thus $a,\del(a),\del^2(a),\dots$ span a finite dimensional vector space over $L$, so there is an $r \in \N$ such that $a,\del(a),\del^2(a),\dots, \del^r(a)$ are linearly dependent over $L$. Let $V$ be the $F$-vector space spanned by $a,\del(a),\dots,\del^r(a)$. Then 
$$\dim_F(V)=\dim_{L}(V\otimes_F L)=\dim_{L}(V\otimes_K \bar{K})\leq r;$$
and we conclude that $a,\del(a),\dots,\del^r(a)$ are linearly dependent over $F$, giving the forward direction. The proof of the converse direction is the same as in the case $C_F$ is algebraically closed; see \cite[Cor.~1.38, proof of (3)$\Rightarrow$(1)]{singervanderput}.
\end{proof}

In particular, any two Picard-Vessiot rings $R, R'$ with the same field of fractions (but possibly for different matrices) are necessarily equal.

\begin{subsection}{Linearization of the differential Galois group}
The differential Galois group of a Picard-Vessiot ring was defined above as an abstract linear algebraic group. A choice of fundamental solution matrix determines an embedding into a general linear group as follows. 

\begin{prop}\label{thm.galoistheory}
Let $R=F[Y,Y^{-1}]$ be a Picard-Vessiot ring over $F$ with field of constants $C_F=K$. Then there is a closed embedding of linear algebraic groups $$\Psi_Y =\Psi_{Y,K} \colon \underline{\Aut}^\del(R/F)\hookrightarrow \GL_{n,K}$$ such that for all $K$-algebras $S$: $$\Psi_Y(S) \colon \Aut^\del(R\otimes_KS/F\otimes_KS)\hookrightarrow \GL_n(S), \ \sigma \mapsto (Y\otimes 1)^{-1}\sigma(Y\otimes 1).$$ 
\end{prop}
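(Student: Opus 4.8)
The plan is to define $\Psi_Y$ levelwise on $S$-points, verify that each $\Psi_Y(S)$ is an injective group homomorphism natural in $S$ (so that $\Psi_Y$ is a morphism of affine $K$-group schemes), and then upgrade this to a \emph{closed} immersion by passing to coordinate rings, where the torsor theorem does the decisive work.

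First I would show the formula lands in $\GL_n(S)$. Fix a $K$-algebra $S$, viewed as a constant differential ring, write $Z:=Y\otimes 1\in\GL_n(R\otimes_K S)$, and for $\sigma\in\Aut^\del(R\otimes_K S/F\otimes_K S)$ set $M_\sigma:=Z^{-1}\sigma(Z)$. Since $\del(Z)=(A\otimes 1)Z$, since $\sigma$ commutes with $\del$, and since $\sigma$ fixes the matrix $A\otimes 1$ (its entries lie in $F\otimes_K S$), the matrix $\sigma(Z)$ again satisfies $\del(\sigma(Z))=(A\otimes 1)\sigma(Z)$. Using $\del(Z^{-1})=-Z^{-1}(A\otimes 1)$, the product rule then yields $\del(M_\sigma)=0$, and the same computation gives $\del(M_\sigma^{-1})=0$ for $M_\sigma^{-1}=\sigma(Z)^{-1}Z$. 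So the entries of $M_\sigma$ and $M_\sigma^{-1}$ are constants of $R\otimes_K S$. At this point I would invoke the elementary fact that $C_{R\otimes_K S}=1\otimes S$: writing a constant as $\sum_i r_i\otimes s_i$ with the $s_i$ part of a $K$-basis of $S$, the equation $\sum_i\del(r_i)\otimes s_i=0$ forces $\del(r_i)=0$, i.e.\ $r_i\in C_R=K$. Hence $M_\sigma,M_\sigma^{-1}$ have entries in $S$ with $\det(M_\sigma)\det(M_\sigma^{-1})=1$, so $M_\sigma\in\GL_n(S)$ and $\Psi_Y(S)(\sigma):=M_\sigma$ is well defined.

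Next I would record the formal properties. From $\sigma(Z)=ZM_\sigma$ and the fact that the entries of $M_\tau$ lie in $1\otimes S\subseteq F\otimes_K S$ and are therefore fixed by $\sigma$, one gets $(\sigma\tau)(Z)=\sigma(ZM_\tau)=\sigma(Z)\,M_\tau=ZM_\sigma M_\tau$, whence $M_{\sigma\tau}=M_\sigma M_\tau$; thus $\Psi_Y(S)$ is a homomorphism. It is injective because $M_\sigma=I$ forces $\sigma$ to fix $Z$ and $Z^{-1}$, hence to fix the generators of $R\otimes_K S=(F\otimes_K S)[Z,Z^{-1}]$ over $F\otimes_K S$, so $\sigma=\Id$. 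Finally, a $K$-algebra map $S\to S'$ carries $Z$ to $Z$ and commutes with the formation of $M_\sigma$, so $\Psi_Y$ is natural in $S$ and hence a morphism of affine $K$-group schemes $\underline{\Aut}^\del(R/F)\to\GL_{n,K}$.

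The hard part is upgrading this to a closed embedding, since injectivity on $S$-points alone is not sufficient; I would instead show that the comorphism is surjective. Under Yoneda, $\Psi_Y$ corresponds to a homomorphism of $K$-Hopf algebras $\psi^\ast\colon K[z_{ij},\det^{-1}]\to C_{R\otimes_F R}$, and I expect that evaluating $\Psi_Y$ at the universal point (the identity automorphism of the representing scheme, described via the torsor isomorphism $R\otimes_F R\cong R\otimes_K C_{R\otimes_F R}$) identifies $\psi^\ast(z_{ij})$ with the entries of the constant matrix $\tilde Y:=(Y\otimes 1)^{-1}(1\otimes Y)\in\GL_n(C_{R\otimes_F R})$. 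Because the torsor theorem gives $C_{R\otimes_F R}=K[Y\otimes Y^{-1},Y^{-1}\otimes Y]$, i.e.\ this Hopf algebra is generated over $K$ by the entries of $\tilde Y$ and of $\tilde Y^{-1}$, the map $\psi^\ast$ is surjective. A surjection of Hopf algebras corresponds to a closed immersion of affine group schemes, which produces the desired closed embedding $\Psi_Y$.
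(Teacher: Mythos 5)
Your proof is correct and takes essentially the same route as the paper's: both hinge on the Hopf-algebra surjection $K[Z,Z^{-1}] \twoheadrightarrow C_{R\otimes_F R}=K[Y\otimes Y^{-1},Y^{-1}\otimes Y]$ supplied by the torsor theorem, with the paper defining the closed embedding from this surjection and then matching it against $\Psi_Y$ on $S$-points, while you build $\Psi_Y$ on points first and then identify its comorphism with that surjection via the universal point. The extra verifications you carry out (well-definedness via $C_{R\otimes_K S}=1\otimes S$, the homomorphism property, injectivity, naturality) are correct and simply make explicit what the paper leaves as "easy to deduce."
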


\begin{proof}
Since $\sigma$ is a differential automorphism, $\partial\left((Y \otimes 1)^{-1}\sigma(Y \otimes 1)\right)=0$, i.e., $(Y \otimes 1)^{-1}\sigma(Y \otimes 1)$ has entries in $C_{R\otimes_K S} = K \otimes_K S \cong S$. Therefore, $\Psi_Y(S)$ is a well-defined map. Let $K[Z,Z^{-1}]$ denote the coordinate ring of $\GL_n$ over $K$. Let $\rho \colon \underline{\Aut}^\del(R/F)\hookrightarrow \GL_{n,K}$ be the closed embedding induced by the surjection on the coordinate rings $K[Z,Z^{-1}]\to K[Y\otimes Y^{-1},  Y^{-1}\otimes Y], \ Z \mapsto Y\otimes Y^{-1}$. It is then easy to deduce that $\rho(S)=\Psi_Y(S)$ for every $K$-algebra $S$ by examining the isomorphism $\Aut^\del(R\otimes_K S/F \otimes_K S)\to \Hom_K(K[Y\otimes Y^{-1}, Y^{-1}\otimes Y], S)$. Hence $\Psi_Y=\rho$ is in fact a closed embedding of linear algebraic groups. 
\end{proof}

We define $\Gal^\del_Y(R/F)\leq \GL_n$ as the image of the differential Galois group $\underline{\Aut}^\del(R/F)$ under the linearization $\Psi_Y$ defined in Proposition \ref{thm.galoistheory} and we also call it \textbf{the differential Galois group of $R/F$ (with respect to $Y$}). A different choice of fundamental solution matrix leads to a differential Galois group which is conjugate by an element of $\GL_n(K)$. We will write expressions like $\G\leq \GL_n$ to denote a linear algebraic group with an embedding into a fixed copy of $\GL_n$. 

The following example illustrates the above and will be used in our building blocks in Section~\ref{building blocks subsection}.

\begin{ex}\label{example Ga}
Let $F$ be a differential field with field of constants $K$ and let $a \in F$. We consider the differential equation $\del(y)=Ay$ with $A=\begin{pmatrix}
0 & a \\ 0&0
\end{pmatrix}$. If there exists a differential field extension $L/F$ with $C_L=K$ and an element $y \in L$ with $\del(y)=a$, then $R=F[y]$ is a Picard-Vessiot ring for $A$ over $F$ by Proposition \ref{existencePVR}. Indeed, $Y=\begin{pmatrix}
1 & y \\ 0&1
\end{pmatrix}\in \GL_2(L)$ is a fundamental solution matrix for $A$ and $R=F[Y,Y^{-1}]$. Let $S$ be a $K$-algebra and $\sigma \in \Aut^\del(R\otimes_KS/F\otimes_KS)$. As $$\del(\sigma(y\otimes 1))=\sigma(\del(y\otimes 1))=\sigma(a\otimes 1)=a\otimes 1=\del(y\otimes 1),$$ there exists an $\alpha_\sigma \in C_{R\otimes_K S}=K\otimes_K S\simeq S$ with $\sigma(y\otimes 1)=y\otimes 1+\alpha_\sigma$. Moreover, $Y^{-1}\sigma(Y)=\begin{pmatrix}
1 & \alpha_\sigma \\ 0&1
\end{pmatrix}$, hence $\Gal^\del_Y(R/F)(S)$ is a subgroup of $\Ga(S)$ in its two-dimensional representation and $\Gal^\del_Y(R/F)$ is a subgroup scheme of $\Ga$. (Note that since ${\mathbb G}_a$ has no proper closed subgroups in characteristic zero, it must in fact be either the whole group or trivial.)
\end{ex}

\begin{rem} \label{remark equal groups}
One advantage of working within a fixed $\GL_n$ is the following. Two linear algebraic groups $\G\leq \GL_n$ and $\Hcal \leq \GL_n$ defined over $K$ are equal (as subgroups of $\GL_n$) if and only if $\G(\bar{K})=\Hcal(\bar{K})$ inside $\GL_n(\bar{K})$, since the $\bar{K}$-rational points of a linear algebraic group are dense. In particular, $\G$ and $\Hcal$ are isomorphic over $K$ if $\G(\bar{K})=\Hcal(\bar{K})$; whereas in general, $\G(\bar{K})\cong\Hcal(\bar{K})$ for abstract linear algebraic groups would imply only that $\G$ and $\Hcal$ are isomorphic over $\bar{K}$.  
\end{rem}

The following lemma is immediate from the definitions (in the language of differential modules, the transformation corresponds to a change of basis). 

\begin{lem}\label{modulebasechangelemma}
Let $R/F$ be a Picard-Vessiot ring for a matrix $A$ with fundamental
solution matrix $Y \in \GL_n(R)$. Then for each $B \in \GL_n(F)$, $R/F$ is
also a Picard-Vessiot ring for $\del(B)B^{-1}+BAB^{-1}$ with fundamental
solution matrix $BY \in \GL_n(R)$, and
$\Gal^\del_{Y}(R/F)=\Gal^\del_{BY}(R/F)$ inside $\GL_{n,C_F}$.
\end{lem}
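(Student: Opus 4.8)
The plan is to verify the two assertions separately, treating the ring-theoretic claim first and the Galois-group claim second. Both reduce to short computations, so I would present them directly. First I would check that $BY$ is a fundamental solution matrix for $A' := \del(B)B^{-1}+BAB^{-1}$. Since $B \in \GL_n(F)$ and $\del(Y)=AY$, the Leibniz rule gives $\del(BY)=\del(B)Y+B\del(Y)=\del(B)Y+BAY$, while $A'(BY)=(\del(B)B^{-1}+BAB^{-1})BY=\del(B)Y+BAY$; so $\del(BY)=A'(BY)$, as needed. Because $B$ and $B^{-1}$ have entries in $F$, the entries of $BY$ (resp.\ of $(BY)^{-1}=Y^{-1}B^{-1}$) are $F$-linear combinations of the entries of $Y$ and $Y^{-1}$, and conversely; hence $F[BY,(BY)^{-1}]=F[Y,Y^{-1}]=R$. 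Applying Proposition~\ref{existencePVR} with $L:=\Frac(R)$ and the fundamental solution matrix $BY \in \GL_n(R)\subseteq \GL_n(L)$ then shows that $R=F[BY,(BY)^{-1}]$ is a Picard-Vessiot ring for $A'$. Note that the conditions $C_R=K$ and differential simplicity are properties of the ring $R$ alone and so are unaffected by the change of matrix.

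For the equality of Galois groups I would first recall that the abstract group functor $\underline{\Aut}^\del(R/F)$ depends only on the extension $R/F$ and not on the chosen matrix; only its linearization changes. So it suffices to compare the two embeddings $\Psi_Y$ and $\Psi_{BY}$ from Proposition~\ref{thm.galoistheory}. Fix a $K$-algebra $S$ and $\sigma \in \Aut^\del(R\otimes_K S/F\otimes_K S)$. Since the entries of $B$ lie in $F$, the matrix $B\otimes 1$ has entries in $F\otimes_K S$ and is therefore fixed by $\sigma$; hence $\sigma((BY)\otimes 1)=(B\otimes 1)\,\sigma(Y\otimes 1)$. Substituting this into the formula for $\Psi_{BY}(S)$ yields
\[
\Psi_{BY}(S)(\sigma)=((BY)\otimes 1)^{-1}\sigma((BY)\otimes 1)=(Y\otimes 1)^{-1}(B\otimes 1)^{-1}(B\otimes 1)\,\sigma(Y\otimes 1)=\Psi_Y(S)(\sigma).
\]
Thus $\Psi_Y(S)=\Psi_{BY}(S)$ for every $S$, so the two linearizations coincide as maps and in particular have the same image inside $\GL_{n}$; that is, $\Gal^\del_Y(R/F)=\Gal^\del_{BY}(R/F)$.

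As the phrasing ``immediate from the definitions'' suggests, there is no real obstacle here: every step is a one-line computation. The only points that require a little care are the bookkeeping that the transformed matrix $A'$ indeed has $BY$ as a solution, and the observation that $B\otimes 1$ is $\sigma$-invariant precisely because $B$ has entries in the base field $F$. This invariance is exactly what makes the conjugating factor $B\otimes 1$ cancel, leaving the two linearizations literally equal rather than merely conjugate (the latter being the generic behaviour for an arbitrary change of fundamental solution matrix, as remarked after Proposition~\ref{thm.galoistheory}).
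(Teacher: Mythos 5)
Your proof is correct and is exactly the routine verification the paper has in mind: the paper states this lemma without proof, calling it ``immediate from the definitions,'' and your two computations (the Leibniz-rule check that $BY$ solves the transformed equation together with $F[BY,(BY)^{-1}]=R$, and the cancellation of $B\otimes 1$ showing $\Psi_{BY}=\Psi_Y$) are precisely those details. Note only that your appeal to Proposition~\ref{existencePVR} with $L=\Frac(R)$ tacitly uses $C_{\Frac(R)}=C_F$; this is standard, but your closing observation that all four defining conditions of a Picard--Vessiot ring hold for $R$ directly makes that appeal unnecessary anyway.
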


\begin{lem}\label{fieldbasechangelemma}
Let $R/F$ be a Picard-Vessiot ring for a matrix $A$ with fundamental
solution matrix $Y \in \GL_n(R)$. Let $L/F$ be a differential field
extension such that $R$ and $L$ are contained in a common differential
field extension with constants $C_F$. Then the compositum $LR=L[Y,Y^{-1}]$
is a Picard-Vessiot ring for $A$ over $L$, 
and $\Gal^\del_{Y}(LR/L)\leq \Gal^\del_{Y}(R/F)$ as subgroups of $\GL_{n, C_F}$.
\end{lem}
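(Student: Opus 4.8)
The plan is to separate the two assertions: the Picard-Vessiot property will come from the existence criterion in Proposition \ref{existencePVR}, and the inclusion of Galois groups from restricting differential automorphisms along the common fundamental solution matrix $Y$. First I would pin down the constants. Since $F \subseteq L$ are both contained in the common differential field extension $M$ with $C_M = C_F$, the chain $C_F \subseteq C_L \subseteq C_M = C_F$ forces $C_L = C_F =: K$. The compositum $LR$ is the subring of $M$ generated by $L$ and $R = F[Y,Y^{-1}]$, and since $F \subseteq L$ this is exactly $L[Y,Y^{-1}]$. Setting $L' := \Frac(L[Y,Y^{-1}]) \subseteq M$, the same squeeze gives $C_{L'} = K = C_L$, while $Y \in \GL_n(L')$ is a fundamental solution matrix for $A$ over $L$. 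Proposition \ref{existencePVR} then yields at once that $LR = L[Y,Y^{-1}]$ is a Picard-Vessiot ring for $A$ over $L$, which also legitimizes the notation $\Gal^\del_Y(LR/L)$.

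For the inclusion I would build a natural restriction map of group functors $\underline{\Aut}^\del(LR/L) \to \underline{\Aut}^\del(R/F)$ compatible with the linearization $\Psi_Y$. Fix a $K$-algebra $S$ and $\sigma \in \Aut^\del(LR \otimes_K S / L \otimes_K S)$. By Proposition \ref{thm.galoistheory} applied to the Picard-Vessiot ring $LR/L$, the matrix $M := (Y \otimes 1)^{-1}\sigma(Y \otimes 1) = \Psi_Y(S)(\sigma)$ has entries in the constants $C_{LR \otimes_K S} = K \otimes_K S$. Because $K \subseteq F$, those entries already lie in $R \otimes_K S$, so both $\sigma(Y \otimes 1) = (Y \otimes 1)M$ and $\sigma((Y \otimes 1)^{-1}) = M^{-1}(Y \otimes 1)^{-1}$ are matrices over $R \otimes_K S$; as $\sigma$ also fixes $F \otimes_K S \subseteq L \otimes_K S$, it maps the subring $R \otimes_K S = (F \otimes_K S)[Y \otimes 1, (Y \otimes 1)^{-1}]$ into itself, and the same reasoning applied to $\sigma^{-1}$ shows that $\sigma|_{R \otimes_K S}$ is a differential automorphism of $R \otimes_K S$ over $F \otimes_K S$. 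These restrictions are functorial in $S$ and are group homomorphisms, and by construction $\Psi_Y(S)(\sigma) = \Psi_Y(S)(\sigma|_{R \otimes_K S})$. Hence $\Gal^\del_Y(LR/L)(S) \subseteq \Gal^\del_Y(R/F)(S)$ for every $S$, giving $\Gal^\del_Y(LR/L) \leq \Gal^\del_Y(R/F)$ inside $\GL_{n,K}$; alternatively one compares only $\bar{K}$-points and invokes the density in Remark \ref{remark equal groups}.

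The step I expect to demand the most care is checking that $\sigma$ genuinely preserves the subring $R \otimes_K S$, i.e.\ that the restriction map is well defined. This rests on $LR$ being a Picard-Vessiot ring built from the \emph{same} fundamental solution matrix $Y$, so that $\Psi_Y$ is given by the identical formula on both functors, and on the image $M$ having constant entries that lie in $K \otimes_K S \subseteq R \otimes_K S$ rather than merely in $LR \otimes_K S$. Once these are secured, the compatibility $\Psi_Y(S)(\sigma) = \Psi_Y(S)(\sigma|_{R \otimes_K S})$ is immediate and the containment of subgroups of $\GL_{n,K}$ follows without further computation.
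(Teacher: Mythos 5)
Your proof is correct and follows essentially the same route as the paper: the Picard-Vessiot property of $LR$ comes from Proposition~\ref{existencePVR} applied inside the common extension, and the group inclusion comes from restricting each $\sigma \in \Aut^\del(LR\otimes_K S/L\otimes_K S)$ to $R\otimes_K S$ (legitimate because $\sigma(Y\otimes 1)=(Y\otimes 1)Z$ with $Z$ constant) and noting that $\Psi_Y$ assigns the same matrix to $\sigma$ and its restriction; you merely fill in details the paper leaves implicit, such as the squeeze $C_L = C_F$ and the check that the restriction is an automorphism. One cosmetic remark: you use the letter $M$ both for the ambient differential field and for the matrix $(Y\otimes 1)^{-1}\sigma(Y\otimes 1)$, which should be disambiguated.
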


\begin{proof} Abbreviate $K=C_F$. By Proposition \ref{existencePVR},  $LR$
is a Picard-Vessiot ring for $A$ over $L$. For the second assertion, let
$S$ be any $K$-algebra, and let $\sigma
\in\underline{\Aut}^\del(LR/L)(S)=\Aut^\partial(LR\otimes_{K}S/L\otimes_K
S)$.  Then $\sigma$ acts on $LR\otimes_{K}S$ and hence on
$\GL_n(LR\otimes_{K}S)$, taking $Y\otimes 1$ to $(Y\otimes 1)Z$ where $Z
=\Psi_Y(S)(\sigma)\in \Gal^\del_Y(LR/L)(S)\leq \GL_n(S)$ with $\Psi_Y$ as in
Proposition~\ref{thm.galoistheory}. Since $R=F[Y,Y^{-1}]$, $\sigma$
restricts to an automorphism of $R\otimes_K S$. 
The corresponding monomorphism $\Gal^\del_Y(LR/L)(S) \to \Gal^\del_Y(R/F)(S)$ maps $Z$ to $Z$.  This defines an inclusion $\Gal^\del_Y(LR/L) \leq \Gal^\del_Y(R/F)$
of subgroups of $\GL_{n,K}$.
\end{proof}

\noindent If moreover the tensor product of $L$ and $R$ is a compositum
without new constants, we even obtain equality:

\begin{prop}\label{extending PVR}
Let $R/F$ be a Picard-Vessiot ring for a matrix $A$ with fundamental
solution matrix $Y \in \GL_n(R)$. Let $L/F$ be a differential field
extension and assume that $L\otimes_F R$ is an integral domain such that
its field of fractions has constants $C_F$. Then $L\otimes_F R$ is a
Picard-Vessiot ring for $A$ over $L$ with fundamental solution matrix
$1\otimes Y$. Moreover, we have an equality of differential Galois groups
inside $\GL_{n,C_F}$:
\[\Gal^\del_{1\otimes Y}(L\otimes_F R/L)=\Gal^\del_Y(R/F)\]
\end{prop}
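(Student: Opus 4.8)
The plan is to obtain the first assertion from Proposition~\ref{existencePVR} and then to prove the equality of Galois groups by identifying coordinate rings, taking care to track the linearizing embeddings rather than merely comparing abstract groups.

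First I would equip $L\otimes_F R$ with its natural derivation $\del(l\otimes r)=\del(l)\otimes r+l\otimes\del(r)$, making it a differential ring whose fraction field I denote $M:=\Frac(L\otimes_F R)$. Since $L\otimes_F R$ is an integral domain and $F$ is a direct summand of $R$ as an $F$-vector space, the map $L=L\otimes_F F\to L\otimes_F R\subseteq M$ is injective, so $L$ embeds in $M$; as $C_M=C_F=K$ by hypothesis and $K\subseteq L$, this forces $C_L=L\cap C_M=K$ and likewise $C_{L\otimes_F R}=K$. Because $A$ has entries in $F\subseteq L$ and $\del(1\otimes Y)=1\otimes\del(Y)=1\otimes AY=A\,(1\otimes Y)$, the matrix $1\otimes Y$ is a fundamental solution matrix for $A$ over $L$ inside $M$. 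Finally $L\otimes_F R=L\otimes_F F[Y,Y^{-1}]=L[1\otimes Y,(1\otimes Y)^{-1}]$, so Proposition~\ref{existencePVR}, applied to $M/L$ (whose constants $C_M=K=C_L$ agree), shows that $L\otimes_F R$ is a Picard-Vessiot ring for $A$ over $L$ with fundamental solution matrix $1\otimes Y$. Abbreviate $G:=\Gal^\del_Y(R/F)$ and $G':=\Gal^\del_{1\otimes Y}(L\otimes_F R/L)$.

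For the equality of Galois groups, one inclusion is immediate: realizing $L\otimes_F R$ as the compositum of $L$ and $R$ inside $M$, Lemma~\ref{fieldbasechangelemma} gives $G'\leq G$ as subgroups of $\GL_{n,K}$. The task is to upgrade this to equality. By Proposition~\ref{thm.galoistheory}, both groups are closed subschemes of $\GL_{n,K}$: $G$ is cut out by the kernel of the surjection $K[Z,Z^{-1}]\to C_{R\otimes_F R}$, $Z\mapsto Y\otimes Y^{-1}$, and $G'$ by the kernel of $K[Z,Z^{-1}]\to C_{(L\otimes_F R)\otimes_L(L\otimes_F R)}$, $Z\mapsto (1\otimes Y)\otimes(1\otimes Y)^{-1}$. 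Hence it suffices to produce a $K$-algebra isomorphism $C_{(L\otimes_F R)\otimes_L(L\otimes_F R)}\cong C_{R\otimes_F R}$ carrying $(1\otimes Y)\otimes(1\otimes Y)^{-1}$ to $Y\otimes Y^{-1}$, since then the two defining ideals in $K[Z,Z^{-1}]$ coincide and therefore $G'=G$.

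To build this isomorphism I would use the canonical differential isomorphism $(L\otimes_F R)\otimes_L(L\otimes_F R)\cong L\otimes_F(R\otimes_F R)$ given by $(l_1\otimes r_1)\otimes(l_2\otimes r_2)\mapsto l_1l_2\otimes r_1\otimes r_2$, under which $(1\otimes Y)\otimes(1\otimes Y)^{-1}$ is sent to $1\otimes(Y\otimes Y^{-1})$. Applying $L\otimes_F(-)$ to the torsor isomorphism $R\otimes_F R\cong R\otimes_K C_{R\otimes_F R}$ (with $C_{R\otimes_F R}$ carrying the zero derivation) rewrites the right-hand side as $(L\otimes_F R)\otimes_K C_{R\otimes_F R}$; taking constants, and using $C_{L\otimes_F R}=K$ together with the standard fact that the constants of a tensor product with a trivially-differential factor split off that factor, yields $C_{(L\otimes_F R)\otimes_L(L\otimes_F R)}\cong C_{R\otimes_F R}$ and sends $(1\otimes Y)\otimes(1\otimes Y)^{-1}\mapsto Y\otimes Y^{-1}$, exactly as needed.

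The main obstacle, and the reason the statement requires the integral-domain hypothesis rather than following formally, is precisely this last identification \emph{as subgroups of} $\GL_{n,K}$. Over a non-algebraically closed $K$ one cannot conclude $G'=G$ from dimension alone: a transcendence degree count only gives $\dim G'=\dim G$, which is insufficient when $G$ is disconnected. What genuinely matters is that the linearizing matrix of constants for $G'$ is transported to that of $G$ under the torsor and base-change isomorphisms, so that the two closed immersions into $\GL_{n,K}$ have literally the same defining ideal; verifying this compatibility, fed by the input $C_{L\otimes_F R}=K$ coming from the hypothesis, is where the actual work lies.
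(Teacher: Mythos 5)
Your proof is correct, and its first half (verifying via Proposition~\ref{existencePVR} that $L\otimes_F R$ is a Picard--Vessiot ring with fundamental matrix $1\otimes Y$, plus the inclusion of Galois groups from Lemma~\ref{fieldbasechangelemma}) matches the paper; but your mechanism for the key ``moreover'' step is genuinely different. The paper proves equality at the level of the functor of points: for every $K$-algebra $S$, any $\sigma_0 \in \Aut^\del(R\otimes_K S/F\otimes_K S)$ extends to the differential automorphism $\Id_L\otimes_F\sigma_0$ of $(L\otimes_F R)\otimes_K S \cong L\otimes_F(R\otimes_K S)$ over $L\otimes_K S$, so the monomorphism $\Gal^\del_{1\otimes Y}(L\otimes_F R/L)(S)\to \Gal^\del_Y(R/F)(S)$ (which sends $Z$ to $Z$) is surjective; this is essentially one line and needs no input beyond the definitions. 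You instead work at the level of representing Hopf algebras, identifying $C_{(L\otimes_F R)\otimes_L(L\otimes_F R)}$ with $C_{R\otimes_F R}$ through $(L\otimes_F R)\otimes_L(L\otimes_F R)\cong L\otimes_F(R\otimes_F R)\cong (L\otimes_F R)\otimes_K C_{R\otimes_F R}$, using the torsor theorem, the splitting of constants off a trivially-differential tensor factor, and the hypothesis $C_{L\otimes_F R}=K$, and then matching the defining ideals in $K[Z,Z^{-1}]$. Both routes correctly isolate the no-new-constants hypothesis as the crucial input; the paper's is more elementary (it never invokes the torsor theorem), while yours makes the identification of coordinate rings explicit and arguably explains ``why'' the groups coincide as closed subschemes. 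One point of care in your route, though not a gap: the torsor theorem as stated only asserts the \emph{existence} of an $R$-linear differential isomorphism $R\otimes_F R\cong R\otimes_K C_{R\otimes_F R}$, and an arbitrary such isomorphism need not restrict to the identity on constants; your tracking of $(1\otimes Y)\otimes(1\otimes Y)^{-1}\mapsto Y\otimes Y^{-1}$ relies on using the canonical multiplication map $r\otimes c\mapsto (r\otimes 1)c$, whose inverse does send $c\mapsto 1\otimes c$ (alternatively, any torsor isomorphism can be corrected by composing with the induced $K$-algebra automorphism of $C_{R\otimes_F R}$).
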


\begin{proof} Abbreviate $K=C_F$. View $L\otimes_F R$ as the compositum of
$L$ and $R$ inside $\Frac(L\otimes_F R)$. Then Lemma
\ref{fieldbasechangelemma} implies that $L\otimes_F R=F[1\otimes Y,
(1\otimes Y)^{-1}]$ is a Picard-Vessiot ring over $L$ and
$\Gal^\del_{1\otimes Y}(L\otimes_F R/L)\leq \Gal^\del_Y(R/F)$ as subgroups of
$\GL_{n,K}$. To obtain equality, it suffices to show that for any
$K$-algebra $S$ and any $\sigma_0 \in
\Aut^\partial(R\otimes_{K}S/F\otimes_K S)$ there exists an element $\sigma
\in\Aut^\partial((L\otimes_F R)\otimes_{K}S/L\otimes_K S)=\Aut^\partial(L
\otimes_F (R\otimes_{K}S)/L\otimes_F(F \otimes_K S))$ that restricts to
$\sigma_0$. But $\sigma=\operatorname{id}_L\otimes_F\sigma_0$ has that property, which concludes the proof.
\end{proof}

\noindent In the special case when $R/F$ is a finite Galois extension and $L/F$ is a finite extension, the condition that $L\otimes_F R$ is an integral domain is equivalent to saying that $L$ and $R$ are linearly disjoint over $F$ and the condition that the field $L\otimes_F R$ does not have new constants is equivalent to saying that $L\otimes_F R$ is regular over $K$.  (Recall that a field extension $A/K$ is {\em regular} if $A$ is linearly disjoint from $\bar K$ over $K$.)

\end{subsection}

\begin{subsection}{The Galois correspondence}
Let $R/F$ be a Picard-Vessiot ring together with a fundamental solution matrix  $Y \in \GL_n(R)$. Set $G:=\underline{\Aut}^\del(R/F)$, $\G:=\Gal^\del_Y(R/F)\leq \GL_n$ and $E:=\Frac(R)$, $K:=C_F$. Then for a closed subgroup $\Hcal \leq \G$ (defined over~$K$), consider the inverse image $H\leq G$ of $\Hcal$ under the isomorphism $\Psi_Y: G \to \G$. The \textbf{functorial invariants} $E^H$ in $E$ are defined as the elements $a/b \in \Frac(R)$ satisfying 
\[ 
  \sigma(a\otimes 1)(b\otimes 1)=(a\otimes 1) \sigma(b\otimes 1)  \] for all $K$-algebras $S$ and all $\sigma \in H(S)\subseteq \Aut^\del(R\otimes_K S/F\otimes_K S)$. As $H(\bar{K})$ is dense in $H$, it even suffices to check the above condition for $S=\bar{K}$. The \textbf{Galois correspondence} (\cite[Thm.~4.4]{Dyc}) then states that there is an inclusion-reversing bijection between closed subgroups of $G$ (defined over $K$) and differential subfields $F \subseteq L \subseteq E$, given by \[ H\mapsto E^H \text{ and } L\mapsto \underline{\Aut}^\del(R/L).  \]

We illustrate with an example why we need to consider functorial invariants rather than invariants on $K$-points.

\begin{ex}\label{example-invariants}
Consider $F=\Q(x)$ with derivation $d/dx$ and $R=F[\sqrt[3]{x}]$. Then $R=\Frac(R)$ is a Picard-Vessiot ring over $F$ for the differential equation $\del(y)=\frac{1}{3x}y$ with fundamental solution matrix $y=\sqrt[3]{x}$ and differential Galois group $G\cong \mu_3$, the subgroup of $\GL_3$ defined by the equation $X^3-1=0$. Note that $\mu_3(\Q)=\{1 \}$; hence $\Aut^\partial(R/F)=\Aut(R/F)=\{\operatorname{id} \}$, and $R^{\Aut(R/F)}$ strictly contains $F$.  But $R^G = F$.
\end{ex}

\medskip

It is known (\cite[Proposition 4.3]{Dyc}) that if $H\leq G$ is a closed normal subgroup, then $E^H$  is the fraction field of a Picard-Vessiot ring $R_H$ over $F$
with $\underline{\Aut}^\del(R_H/F)\cong G/H$. For $H$ equal to the identity component of $G$, we obtain the following.

\begin{lem}\label{closed}
Let $F$ be a differential field and let $R/F$ be a Picard-Vessiot ring with differential Galois group $G$. Write $G^0 $ for the identity component of $G$ and $E=\Frac(R)$. Then $E^{G^0 }$ is the algebraic closure of $F$ in $E$, and it is a finite field extension of degree $[E^{G^0 }:F]=|G(\bar{K})/G^0 (\bar{K})|$.
\end{lem}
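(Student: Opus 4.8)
The plan is to read off everything from the Galois correspondence together with the transcendence-degree formula $\trd(\Frac(R)/F)=\dim(G)$ furnished by the torsor theorem. Write $K=C_F$, $E=\Frac(R)$, and let $\tilde F$ denote the algebraic closure of $F$ in $E$. The identity component $G^0$ is a closed normal subgroup of $G$ with finite quotient $G/G^0$, so the result quoted just above the lemma (\cite[Prop.~4.3]{Dyc}) applies: $E^{G^0}=\Frac(R_{G^0})$ for a Picard-Vessiot ring $R_{G^0}/F$ with $\underline{\Aut}^\del(R_{G^0}/F)\cong G/G^0$. Since $\dim(G/G^0)=0$, the torsor formula gives $\trd(E^{G^0}/F)=0$; as $E^{G^0}/F$ is finitely generated it is therefore a finite extension, and in particular $E^{G^0}\subseteq\tilde F$.

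For the degree I would use that $G/G^0$ is a finite \'etale group scheme over $K$. The torsor theorem isomorphism $R_{G^0}\otimes_F\bar F\cong C_{R_{G^0}\otimes_F R_{G^0}}\otimes_K\bar F$ identifies $R_{G^0}\otimes_F\bar F$ with $\mathcal{O}(G/G^0)\otimes_K\bar F$, a product of $|(G/G^0)(\bar K)|$ copies of $\bar F$. Hence $R_{G^0}$ is a finite \'etale $F$-algebra; being a domain it is a field, so $R_{G^0}=E^{G^0}$ and
\[
[E^{G^0}:F]=\dim_F R_{G^0}=|(G/G^0)(\bar K)|=|G(\bar K)/G^0(\bar K)|,
\]
which is the asserted degree.

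It remains to prove the reverse inclusion $\tilde F\subseteq E^{G^0}$, and this is where the actual argument lies. Differentiating minimal polynomials shows that $\del$ preserves $\tilde F$, so $\tilde F$ is an intermediate differential field; moreover every $a\in\tilde F$ is differentially finite over $F$ (the iterated derivatives all lie in the finite-dimensional space $F(a)$), so Proposition \ref{differentiallyfinite} gives $\tilde F\subseteq R$ and hence $\tilde F\cdot R=R$. Since $C_E=K$ (as $E$ is a Picard-Vessiot extension), Lemma \ref{fieldbasechangelemma} then shows that $R=\tilde F[Y,Y^{-1}]$ is a Picard-Vessiot ring over $\tilde F$, whose differential Galois group $H=\underline{\Aut}^\del(R/\tilde F)$ is the subgroup corresponding to $\tilde F$ under the Galois correspondence. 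Applying the torsor formula over $\tilde F$ and using additivity of transcendence degree gives $\dim(H)=\trd(E/\tilde F)=\trd(E/F)=\dim(G)$, because $\tilde F/F$ is algebraic. A closed subgroup of $G$ of full dimension has identity component equal to $G^0$ and therefore contains $G^0$; thus $G^0\subseteq H$, and the inclusion-reversing correspondence yields $\tilde F=E^H\subseteq E^{G^0}$. Combined with the first paragraph this gives $E^{G^0}=\tilde F$.

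The main obstacle I anticipate is the reverse inclusion: one must verify that $R$ remains a Picard-Vessiot ring over the intermediate field $\tilde F$ (so that the dimension formula is available for $H$), which is exactly what the combination of Proposition \ref{differentiallyfinite} and Lemma \ref{fieldbasechangelemma} secures, and then invoke the elementary fact that in characteristic zero a closed subgroup of full dimension contains the identity component. By contrast, the degree computation is an immediate consequence of the finiteness and \'etaleness of $G/G^0$.
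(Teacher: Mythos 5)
Your proof is correct. Its core---the reverse inclusion of the algebraic closure $\tilde F$ of $F$ in $E$ into $E^{G^0}$---is the same argument as the paper's: the paper likewise observes that every algebraic subextension $L$ of $E/F$ is a differential subfield whose group $\underline{\Aut}^\del(R/L)$ is a closed subgroup of $G$ of full dimension, hence contains $G^0$, so that $L=E^{\underline{\Aut}^\del(R/L)}\subseteq E^{G^0}$; your appeal to Proposition~\ref{differentiallyfinite} and Lemma~\ref{fieldbasechangelemma} makes explicit why the dimension formula applies to that subgroup, a point the paper leaves implicit. The finiteness step is a minor variant (the paper uses $\trd(E/E^{G^0})=\dim(G^0)=\dim(G)=\trd(E/F)$ together with finite generation of $E/F$, whereas you invoke \cite[Prop.~4.3]{Dyc} to write $E^{G^0}=\Frac(R_{G^0})$ for a Picard-Vessiot ring with group $G/G^0$ of dimension zero). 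Where you genuinely diverge is the degree computation. The paper extends constants: it shows that $E^{G^0}\bar{K}/F\bar{K}$ is an ordinary finite Galois extension with group $(G/G^0)(\bar{K})\cong G(\bar{K})/G^0(\bar{K})$ and of the same degree as $E^{G^0}/F$. You instead apply the torsor theorem to $R_{G^0}$ itself, trivializing it over $\bar{F}$ as $\mathcal{O}(G/G^0)\otimes_K\bar{F}\cong \bar{F}^N$ with $N=|(G/G^0)(\bar{K})|$, so that $[E^{G^0}:F]=\dim_F R_{G^0}=N$. Your route buys slightly more---it shows along the way that the Picard-Vessiot ring $R_{G^0}$ is already a field, equal to $E^{G^0}$---and it avoids verifying that the compositum $E^{G^0}\bar{K}$ is Galois over $F\bar{K}$ with the asserted automorphism group; the paper's route is more elementary, using only classical Galois theory after base change rather than the torsor trivialization and the \'etaleness of finite group schemes in characteristic zero. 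Both arguments ultimately rest on the identification $(G/G^0)(\bar{K})\cong G(\bar{K})/G^0(\bar{K})$.
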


\begin{proof}
Since $\trd(E/E^{G^0 })=\dim(G^0 )=\dim(G)=\trd(E/F)$ and $E$ is finitely generated over $F$, the extension $E^{G^0 }/F$ is finite. On the other hand, every algebraic subextension $F\subseteq L \subseteq E$ is a differential extension, and thus $H:=\underline{\Aut}^\del(R/L)$ is a closed subgroup of $G$ of the same dimension. Therefore, $G^0  \subseteq H$ and $L=E^H \subseteq E^{G^0 }$. Hence $E^{G^0 }$ is the algebraic closure of $F$ in $E$. While the finite field extension $E^{G^0}/F$ might not be Galois, the compositum $E^{G^0}\bar{K}\cong E^{G^0}\otimes_K {\bar K}$ is a finite extension of $F\bar{K}\cong F\otimes_K {\bar K}$ of the same degree as $E^{G^0}/F$. Moreover, $\Aut(E^{G^0}\bar{K}/F\bar{K})\cong (G/G^0)(\bar{K})\cong G(\bar{K})/G^0 (\bar{K})$ and $(E^{G^0}\bar{K})^{\Aut(E^{G^0}\bar{K}/F\bar{K})}=F\bar{K}$ (because $\bar{K}$ is algebraically closed).  Hence $E^{G^0}\bar{K}/F\bar{K}$ is a finite Galois extension of degree $|G(\bar{K})/G^0 (\bar{K})|$ and the claim follows. 
\end{proof}

\end{subsection}

\begin{subsection}{Galois descent for Picard-Vessiot rings}
 
 Let $F_0$ be a differential field with field of constants $K_0$. Given a linear algebraic group $\G\leq \GL_n$ over $K_0$, it might be easier to realize $\G_K$ as a differential Galois group over $F_0K$ for some finite extension of constants $K/K_0$. We assume that $K/K_0$ is a finite Galois extension with Galois group $\Gamma$. As $K_0$ is algebraically closed in $F_0$, $F_0K \cong F_0\otimes_{K_0}K$ is Galois over $F_0$ with group $\Gamma$ (and the action of $\Gamma$ commutes with the derivation). In our applications, we construct a Picard-Vessiot ring over $F_0K$ such that the fundamental solution matrix is $\Gamma$-invariant. The following lemma explains that this Picard-Vessiot ring then descends to a Picard-Vessiot ring over $F_0$ with differential Galois group $\G$.
 
 \begin{lem}\label{lemmainvariantPVR}
 Let $K/K_0$ be a finite Galois extension with Galois group $\Gamma$. Let $F_0$ be a differential field with $C_{F_0}=K_0$ and set $F=F_0K$.  Further, let $L/F$ be an extension of differential fields with $C_{L}=C_F=K$ and such that the action of~$\Gamma$ on~$F$ extends to an action on $L$ via differential automorphisms. If $R=F[Y,Y^{-1}]\subseteq L$ is a Picard-Vessiot ring over $F$ such that $Y \in \GL_n(L)$ is invariant under the action of $\Gamma$, then $R_0:=F_0[Y,Y^{-1}]$ is a Picard-Vessiot ring over $F_0$ with $\Gal^\del_{Y}(R_0/F_0)_K=\Gal^\del_Y(R/F)$ as subgroups of $\GL_n$. In particular, if $\Gal^\del_Y(R/F)=\G_K$ for a linear algebraic group $\G \leq \GL_{n,K_0}$, then $\Gal^\del_{Y}(R_0/F_0)=\G$.
 \end{lem}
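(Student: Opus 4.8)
The plan is to first descend the differential equation to $F_0$, then exhibit $R_0$ as a Picard-Vessiot ring via Proposition~\ref{existencePVR}, and finally compare the two Galois groups through a base-change-of-constants isomorphism $R_0\otimes_{K_0}K\cong R$. First I would check that the defining matrix descends. Writing $A=\del(Y)Y^{-1}\in F^{n\times n}$, so that $\del(Y)=AY$, I apply $\gamma\in\Gamma$: since $\gamma$ is a differential automorphism commuting with $\del$ and fixes $Y$, one gets $\gamma(A)=\gamma(\del(Y))\gamma(Y)^{-1}=\del(Y)Y^{-1}=A$. As $F^\Gamma=F_0$ (because $F/F_0$ is Galois with group $\Gamma$), we conclude $A\in F_0^{n\times n}$.

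Next I would control the constants of $\Frac(R_0)$. Set $E=\Frac(R)$ and $E_0=\Frac(R_0)$; here $E$ is a Picard-Vessiot extension, so $C_E=K$. Since $\Gamma$ fixes $F_0$ pointwise and fixes $Y$, it fixes $R_0=F_0[Y,Y^{-1}]$, hence $E_0$, pointwise, so $E_0\subseteq E^\Gamma$. The action of $\Gamma$ on the constants $K\subseteq E$ is its Galois action on $K$, so $E^\Gamma\cap K=K^\Gamma=K_0$, and therefore $C_{E_0}=E_0\cap C_E=E_0\cap K\subseteq E^\Gamma\cap K=K_0$; the reverse inclusion is clear, whence $C_{E_0}=K_0=C_{F_0}$. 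Now $Y\in\GL_n(E_0)$ is a fundamental solution matrix for $A\in F_0^{n\times n}$ inside the extension $E_0/F_0$, which has no new constants, so Proposition~\ref{existencePVR} gives that $R_0=F_0[Y,Y^{-1}]$ is a Picard-Vessiot ring over $F_0$. I expect this constant computation to be the crux of the argument, since it is the only place where the $\Gamma$-invariance of $Y$ is essential.

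For the Galois groups, I would apply the extension-of-constants property recorded above to $R_0/F_0$ along $K/K_0$: the ring $R_0\otimes_{K_0}K$ is a Picard-Vessiot ring over $F_0\otimes_{K_0}K\cong F$, with fundamental solution matrix the image of $Y$ and differential Galois group $\Gal^\del_Y(R_0/F_0)_K$. I then identify $R_0\otimes_{K_0}K$ with $R$ via the multiplication map $m\colon R_0\otimes_{K_0}K\to R$, $a\otimes c\mapsto ca$: it is a homomorphism of differential $F$-algebras, surjective because $R=K\cdot R_0$, and injective because its source is differentially simple (being a Picard-Vessiot ring) and $m\neq 0$; hence $m$ is an isomorphism. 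Since $m$ carries the fundamental solution matrix to $Y$, the two linearizations coincide, giving $\Gal^\del_Y(R/F)=\Gal^\del_Y(R_0/F_0)_K$ inside $\GL_{n,K}$. Finally, if $\Gal^\del_Y(R/F)=\G_K$ for some $\G\leq\GL_{n,K_0}$, then $\Gal^\del_Y(R_0/F_0)_K=\G_K$ as subgroups of $\GL_{n,K}$; as $\bar{K}=\bar{K_0}$, passing to $\bar{K}$-points and applying Remark~\ref{remark equal groups} (equivalently, faithfully flat descent along $K_0\to K$) yields $\Gal^\del_Y(R_0/F_0)=\G$ as subgroups of $\GL_{n,K_0}$, as desired.
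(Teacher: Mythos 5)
Your proof is correct and takes essentially the same route as the paper: descend $A=\del(Y)Y^{-1}$ to $F_0$ by $\Gamma$-invariance, apply Proposition~\ref{existencePVR} inside a no-new-constants differential extension of $F_0$ (you use $\Frac(R_0)\subseteq E^\Gamma$ where the paper uses $L^\Gamma$), and compare the Galois groups through the isomorphism $R_0\otimes_{K_0}K\cong R$ followed by passage to $\bar{K}$-points via Remark~\ref{remark equal groups}. The only deviations are in the justifications: you obtain injectivity of the multiplication map from differential simplicity of the Picard-Vessiot ring $R_0\otimes_{K_0}K$, where the paper deduces the isomorphism from $K_0$ being algebraically closed in $\Frac(R_0)$, and you invoke the functorial extension-of-constants property where the paper instead verifies the compatibility of the two linearizations by an explicit computation on $\bar{K}$-points.
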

 
\begin{proof}
 Note that $A:=\del(Y)Y^{-1} \in F^{n\times n}$ is $\Gamma$-invariant, hence $A \in F_0^{n\times n}$. The field $L^\Gamma$ of $\Gamma$-invariants in $L$ is a differential field extension of $F_0$ with $C_{L^\Gamma}=K_0=C_{F_0}$, since $C_{L}=K$. As $Y$ is contained in $\GL_n(L^\Gamma)$, Proposition \ref{existencePVR} implies that $R_0$ is a Picard-Vessiot ring for $A$ over $F_0$. Let $\Hcal \leq \GL_n$ be the linear algebraic group $\Gal^\del_Y(R_0/F_0)$ defined over $K_0$.  Since $K_0$ is algebraically closed in $\Frac(R_0)$, the natural map $R_0\otimes_{K_0}K \to R$ is an isomorphism, and the induced map on matrices sends $Y\otimes_{K_0} 1$ to $Y$.  Hence the natural map $\iota:{\Aut}^\del(R\otimes_K \bar{K}/F\otimes_K\bar{K})
\to {\Aut}^\del(R_0\otimes_{K_0} \bar{K}/F_0\otimes_{K_0}\bar{K})$ is an isomorphism.  
It is given by $\sigma \mapsto \phi^{-1}\sigma\phi$, where the 
differential $\bar{K}$-isomorphism
$\phi$ is the composition 
$$R_0\otimes_{K_0}\bar{K} \to
{R_0\otimes_{K_0} K \otimes_K \bar{K}} \to
R\otimes_K \bar{K},$$ whose induced map on matrices sends
$Y\otimes_{K_0}1$ to $Y\otimes_K 1$. 

We claim that the isomorphism $\iota$ yields the equality
$$\Hcal(\bar{K})=\Gal^\del_Y(R_0/F_0)(\bar{K})=\Gal^\del_Y(R/F)(\bar{K})$$
as subsets of $\GL_n(\bar{K})$.  Namely,
for any $\sigma \in {\Aut}^\del(R\otimes_{K} \bar{K}/F\otimes_{K}\bar{K})$,
 \begin{eqnarray*}
  \Psi_{Y,K}(\bar K)(\sigma) &=&  (Y\otimes_{K}1)^{-1}\sigma(Y\otimes_{K}1)
\\
  &=& \phi((Y\otimes_{K_0}1)^{-1}\iota(\sigma)(Y\otimes_{K_0}1))
\\
  &=& (Y\otimes_{K_0}1)^{-1}\iota(\sigma)(Y\otimes_{K_0}1)
\\
  &=& \Psi_{Y,K_0}(\bar K)(\iota(\sigma)).
\end{eqnarray*}
(The third equality holds since all entries of the matrix are contained in $\bar{K}$.) The claim follows.
 
We conclude that $\Hcal_K=\Gal^\del_Y(R/F)$ as subgroups of $\GL_n$ as asserted. In particular, if $\Gal^\del_Y(R/F)=\G_K$ 
for some $\G \le \GL_{n,K_0}$, then $\Hcal(\bar{K})=\G_K(\bar{K})=\G(\bar{K})$ as subgroups of $ \GL_{n}(\bar{K})$, and hence $\Hcal=\G$. 
\end{proof}
\end{subsection}
\end{section}

\begin{section}{Application of patching to differential equations}\label{sectionpatching}

Throughout this section, let $K=k((t))$ for some field $k$ of characteristic zero; let $F=K(x)$; and consider a derivation $\del$ on $F$  with $C_F=K$. Note that $\del=\del(x)\cdot \del/\del x$. Moreover, $F$ is the function field of the projective $x$-line $\mathbb{P}^1_{k[[t]]}$ over the discrete valuation ring $k[[t]]$. In \cite{HH}, a collection of field extensions $F_P$, $F_{\wp}$ and $F_U$ were considered. Whereas the definitions in loc.~cit.\ are geometric, we only need a special case, in which the description is very explicit:

If $P \in \mathbb{A}^1_k \subset \mathbb{P}^1_k$ is a rational point defined by $x=b$ for some $b \in k$, then we consider the fields
\begin{eqnarray*}
F_P&=&k((x-b,t)) \quad \text{ and}\\
F_{\wp(P)}&=&k((x-b))((t)),
\end{eqnarray*}
where $k((x-b,t))$ denotes the fraction field of the power series ring $k[[x-b,t]]$ in two variables. Given a non-empty finite set $\Pcal$ of $k$-rational points, we have an index set $\Bcal=\{\wp(P)|\; P \in \Pcal\}$ in bijection with $\Pcal$.  If $\Pcal$ consists of points $z=b_i$ for some $b_i \in k$ and $1\leq i \leq m$, then we let $U$ be the complement of $\Pcal$ in $\mathbb{P}^1_k$, and we write
\[F_U=\Frac(k[(x-b_1)^{-1},\dots,(x-b_m)^{-1}][[t]]). \]

As explained in \cite{HH}, there are inclusions $F \subseteq F_U, \! F_P \subseteq F_{\wp(P)}$ for all $P \in \Pcal$. These are in fact inclusions of differential fields, where we equip $F_U$, $F_P$, and $F_{\wp(P)}$  with the derivation $\del(x)\cdot \del/\del x$; moreover $C_{F_U}=C_{F_P}=C_{F_{\wp(P)}}=C_F=K$ for all $P \in \Pcal$. In particular, if $A \in F^{n\times n}$ is such that there exists a fundamental solution matrix $Y \in \GL_n(F_U)$, then $F[Y,Y^{-1}]$ is a Picard-Vessiot ring for $A$ over $F$ (by Proposition \ref{existencePVR}).

The method of patching over the fields $(F,F_U,F_P,F_\wp)$ relies on the following two properties. 

\begin{thm}\label{patching}$ $ \vspace{-0.2cm}
\begin{enumerate}[(a)]
 \item\label{factorization} \underline{Simultaneous factorization property:} Let $n \in \N$. If $(Y_\wp)_{\wp \in \Bcal}$ is a collection of matrices $Y_\wp \in \GL_n(F_\wp)$ then there exist matrices $B_P \in \GL_n(F_P)$ for each $P \in \Pcal$ and one matrix $Y \in \GL_n(F_U)$ such that for each $P \in \Pcal$, $Y_{\wp(P)}=B_P^{-1}Y$ in $\GL_n(F_{\wp(P)})$. 
 \item\label{intersection} \underline{Intersection property:} If $x \in F_U$ is such that for each $P \in \Pcal$, $x$ is contained in $F_P$ when considered as an element of $F_{\wp(P)}$, then $x \in F$. 
\end{enumerate}
\end{thm}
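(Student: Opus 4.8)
The plan is to treat both statements as consequences of a single ``additive decomposition'' together with the $t$-adic completeness of the relevant rings. Write $R_U=\mathcal{O}(U)[[t]]$ with $\mathcal{O}(U)=k[(x-b_1)^{-1},\dots,(x-b_m)^{-1}]$, $R_P=k[[x-b,t]]$ and $R_\wp=k((x-b))[[t]]$, so that $F_U=\Frac(R_U)$, $F_P=\Frac(R_P)$ and $F_\wp=\Frac(R_\wp)=k((x-b))((t))$, all three sitting inside $F_\wp$. The engine is the decomposition: for a single point, $R_\wp=R_P+R_U$, which is just the splitting of a Laurent series in $(x-b)$ into its holomorphic part (in $k[[x-b]]\subseteq R_P$) and its principal part (in $(x-b)^{-1}k[(x-b)^{-1}]\subseteq\mathcal{O}(U)$), applied coefficientwise in $t$; for the whole set $\Pcal$ it becomes the Mittag--Leffler/partial-fraction statement $\prod_{\wp\in\Bcal}F_\wp=\Delta(F_U)+\prod_{P\in\Pcal}F_P$, where $\Delta$ is the diagonal embedding. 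The point of working with the fraction fields rather than the rings is that every nonzero element is a unit; in particular $x-b_j=1/(x-b_j)^{-1}\in F_U^\times$ and $t\in F_P^\times\cap F_U^\times$, so the ``valuation'' obstructions that would appear in a ring-level factorization disappear.

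For the simultaneous factorization (a), I would first reduce to the two-piece case ($\#\Pcal=1$) by induction on $\#\Pcal$, peeling off one point at a time, and then solve $Y=B_PY_{\wp(P)}$ by Cartan's-lemma-style successive approximation in the $t$-adic topology. Reducing $Y_\wp$ modulo $t$ gives $\bar Y_\wp\in\GL_n(k((x-b)))$; using that $\GL_n$ of a field is generated by elementary and diagonal matrices and that $x-b$ and the remaining units split between $F_P^\times$ and $F_U^\times$, one factors this leading term and may assume $Y_\wp\equiv I\pmod t$. For such matrices the multiplicative problem linearizes: writing $Y_\wp=I+t^NM$ one solves $M=M_P+M_U$ by the additive decomposition above, updates $Y$ and $B_P$, and iterates; the corrections lie in strictly higher powers of $t$, so the process converges because $R_U$, $R_P$ and $R_\wp$ are $t$-adically complete. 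I expect the genuinely delicate step to be making this iteration converge simultaneously over all points of $\Pcal$, i.e.\ controlling the interaction between the (non-commuting) local factors $B_P$ through the single global factor $Y$; this is exactly where the multi-point additive surjectivity and completeness are essential.

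For the intersection property (b), let $z\in F_U$ with $z\in F_P$ inside $F_{\wp(P)}$ for every $P$. At each removed point $b_j$, since $z\in\Frac(k[[x-b_j,t]])$, Weierstrass preparation lets me write the denominator of $z$ as (a unit)$\cdot W_j$ with $W_j\in k[[t]][x-b_j]\subseteq K[x]$ a Weierstrass polynomial, so that $W_jz\in k[[x-b_j,t]]$ is holomorphic at $b_j$. Setting $Q=\prod_jW_j\in K[x]$, the element $w:=Qz$ then lies in $F_U$ and is holomorphic at each $b_j$, since $W_jz\in R_{P_j}$ and $\prod_{l\neq j}W_l\in k[[t]][x]\subseteq R_{P_j}$. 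It remains to see that $w\in K(x)$, for then $z=w/Q\in F=K(x)$: the element $w$ can only have poles along $U$, and the combination of its membership in $F_U$ (regularity at $\infty\in U$ and the $t$-adic control coming from a presentation $w=a/s$ with $a,s\in R_U$) with holomorphy at every $b_j$ forces its polar locus to be a finite set of $\bar K$-points; matching principal parts by partial fractions on $\mathbb{P}^1$ then exhibits $w$ as an element of $K(x)$. Here the main obstacle is precisely this finiteness of the polar locus: a priori an element of $F_U$ need not be rational over $K$ (for instance infinite products such as $\prod_j\bigl(1-(x-b_1)^{-1}t^j\bigr)$ lie in $R_U$ but not in $K(x)$), and it is exactly the local conditions $z\in F_P$ at all removed points that rule such functions out. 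Establishing this finiteness — a formal ``meromorphic implies rational'' statement on $\mathbb{P}^1_{k[[t]]}$ — is the crux of (b).
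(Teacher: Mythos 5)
The first thing to note is that the paper does not prove Theorem~\ref{patching} at all: part~(a) is quoted from \cite[Thm.~5.10]{HH} and \cite[Prop.~2.2]{HHKtorsor}, and part~(b) from \cite[Prop.~6.3]{HH}. So your sketch must be measured against those references, whose spirit ($t$-adic successive approximation for factorization, power-series arguments for the intersection) you do capture. But the proposal has concrete gaps at exactly the steps that carry the difficulty. In~(a), the very first move fails as stated: a matrix $Y_\wp \in \GL_n(F_\wp)$ need not lie in $\GL_n\bigl(k((x-b))[[t]]\bigr)$ --- its entries can have poles in $t$, and its determinant can be a nonzero power of $t$ times a unit (e.g.\ $\operatorname{diag}(t,t^{-1})$) --- so ``reducing $Y_\wp$ modulo $t$'' is undefined. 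Repairing this needs either Smith normal form over the complete discrete valuation ring $k((x-b))[[t]]$, after which the diagonal factor $\operatorname{diag}(t^{a_1},\dots,t^{a_n})$ refuses to commute past matrices $\equiv I \pmod{t}$ (conjugation by it destroys the congruence), or the cleaner density argument: $F_P$ is $t$-adically dense in $F_\wp$, so some $D \in \GL_n(F_P)$ satisfies $DY_\wp \equiv I \pmod{t}$. Relatedly, factoring the leading term into elementary and diagonal matrices produces an \emph{interleaved} word with letters from $\GL_n(F_P)$ and $\GL_n(F_U)$, which cannot simply be collected into the one-sided form $B_P^{-1}Y$, since the two groups do not commute. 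Finally, the genuinely multi-point statement --- one global $Y$ serving all $P \in \Pcal$ --- is the part you yourself flag as ``genuinely delicate'' and leave open; the proposed induction on $\#\Pcal$ does not work as stated, because the correction introduced for the new point must be arranged not to disturb the factorizations already achieved at the previously treated points.

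For~(b), your Weierstrass-preparation step is a correct and genuine reduction: it produces $w = Qz \in F_U$ with $Q \in K[x]$ and $w \in k[[x-b_j,t]]$ for every $j$. But the remaining assertion --- that an element of $F_U$ which is integral at every point of $\Pcal$ lies in $K(x)$ --- \emph{is} the intersection property in barely disguised form; you name it ``the crux'' and offer only the heuristic ``finite polar locus plus partial fractions.'' As your own example of an infinite product in $\mathcal{O}(U)[[t]]$ shows, an element of $F_U = \Frac(\mathcal{O}(U)[[t]])$ is a fraction of formal power series with no a priori polar locus on $\mathbb{P}^1$; making ``formal meromorphic implies rational'' precise and proving it is exactly the content of \cite[Prop.~6.3]{HH}. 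So part~(b) has been reduced to itself rather than proven. In short: the architecture is right and matches the literature the paper cites, but neither half of the theorem is actually established by the proposal.
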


For a proof of the simultaneous factorization property, see \cite[Thm 5.10]{HH} and \cite[Prop. 2.2]{HHKtorsor}. The intersection property is stated in \cite[Prop. 6.3]{HH}.

\begin{Def}\label{actonpatchingdata}
In the above setup, an {\em action} of a finite group $\Gamma$ on the {\em differential patching data} $(\Pcal,\Bcal,U)$ consists of the following:
\begin{enumerate}[(1)]
 \item a left action of $\Gamma$ on $F$ via differential automorphisms.
\item a left action of $\Gamma$ on $F_U$ via differential automorphisms, extending the action of $\Gamma$ on $F$.
\item a right action of $\Gamma$ on the finite set $\Pcal$. 
\item for each $\sigma \in \Gamma$ and each $P \in \Pcal$, an isomorphism $\sigma \colon F_{P^\sigma} \to F_{P}$ of differential fields extending $\sigma \colon F \to F$ such that for all $\sigma,\tau \in \Gamma$, $\sigma\tau \colon F_{P^{\sigma\tau}} \to F_{P}$ is the composition $\sigma \circ \tau \colon F_{P^{\sigma\tau}}\to F_{P^\sigma} \to F_{P}$.
\item for each $\sigma \in \Gamma$ and each $P \in \Pcal$, an isomorphism of differential fields $\sigma \colon F_{\wp(P^\sigma)}\to F_{\wp(P)}$ extending both $\sigma\colon F_U \to F_U$ and $\sigma \colon F_{P^\sigma}\to F_{P}$ such that for all $\sigma,\tau \in \Gamma$, $\sigma\tau \colon F_{\wp(P^{\sigma\tau})} \to F_{\wp(P)}$ is the composition $\sigma \circ \tau \colon F_{\wp(P^{\sigma\tau})}\to F_{\wp(P^{\sigma})} \to F_{\wp(P)}$.
\end{enumerate}
\end{Def}

\begin{ex}\label{example action}
(a)  Let $k_0\leq k$ such that $k/k_0$ is a finite Galois extension, let $e\ge 1$ be a natural number such that $k$ contains a primitive $e$-th root of unity, and set $t_0=t^e$. Then $K=k((t))$ is a finite Galois extension of $K_0=k_0((t_0))$, and $\Gamma:=\operatorname{Gal}(K/K_0)$ is the semi-direct product of the cyclic group of order $e$ and the group $\Galf(k/k_0)$.  In particular, $\Gamma$ surjects onto $\operatorname{Gal}(k/k_0)$.
Note that $F=K(x)$ is a finite Galois extension of $F_0=K_0(x)$ whose Galois group is isomorphic to $\Gamma$ and acts on $F$ as a differential field. The action of $\Gamma$ on $F$ over $F_0$ (from the left) induces an action of $\Gamma$ on the $x$-line $\mathbb{P}^1_{k[[t]]}$ over $\mathbb{P}^1_{k_0[[t_0]]}$ from the right. In particular, there is an induced action of $\Gamma$ on $\mathbb{P}^1_{k}$ over $\mathbb{P}^1_{k_0}$ from the right. If $P \in \mathbb{P}^1_{k}$ is a point of the form $x=b$ for some $b \in k$, then $P^\sigma$ is defined by $x=\sigma^{-1}(b)$. The induced isomorphism \[\sigma\colon F_{P^\sigma}=k((x-\sigma^{-1}(b),t)) \to F_{P}=k((x-b,t))\] is a differential isomorphism, as is the induced isomorphism \[\sigma \colon F_{\wp(P^\sigma)}=k((x-\sigma^{-1}(b)))((t)) \to F_{\wp(P)}=k((x-b))((t)).\]  Notice that if $\Pcal \subseteq \mathbb{P}^1_{k}$ is a finite set of closed points invariant under the action of $\Gamma$, then the action of $\Gamma$ on $\mathbb{P}^1_{k[[t]]}$ also induces an action of $\Gamma$ on $F_U$ as a differential field. Therefore, $\Gamma$ acts on the differential patching data $(\Pcal, \Bcal, U)$.

(b)  Observe that the $\Gamma$-orbit of a closed point $x=b$ as above consists of at most $|\Galf(k/k_0)|\leq|\Gamma|$ elements. To obtain an action with orbits of full length $|\Gamma|$ 
for use in Section~\ref{section inverse}, we will use the following construction. Suppose again that $k$ contains a primitive $e$-th root of unity $\zeta$. Let $z=x/t$. We will work with the $z$-line $\mathbb{P}^1_{k[[t]]}$ instead of the $x$-line. (In other words, we perform a blow-up at the origin, and then blow down the original component.) Let $\sigma \in \Gamma$. As $t$ is an $e$-th root of $t_0 \in K_0$, $\sigma(t)=\zeta^{n_\sigma}t$ for some $n_\sigma \in \N$, and hence $\sigma(z)=\zeta^{-n_\sigma}z$. Note that the induced action of $\Gamma$ on the $z$-line
$\mathbb{P}^1_k$ maps a point $P$ of the form $z=b$ to the point $P^\sigma$ defined by $z=\zeta^{n_{\sigma^{-1}}}\sigma^{-1}(b)$; and the induced isomorphisms $\sigma\colon F_{P^\sigma}=k((z-\zeta^{n_{\sigma^{-1}}}\sigma^{-1}(b),t)) \to F_{P}=k((z-b,t))$ and $F_{\wp(P^\sigma)}=k((z-\zeta^{n_{\sigma^{-1}}}\sigma^{-1}(b)))((t)) \to F_{\wp(P)}=k((z-b))((t))$ are again isomorphisms of differential fields (they map $z-\zeta^{n_{\sigma^{-1}}}\sigma^{-1}(b)$ to $\zeta^{-n_\sigma}(z-b)$). The $\Gamma$-orbit of such a point $z=b$ then consists of all points of the form $z=\zeta^{n_\sigma}\sigma(b)$ for $\sigma \in \Gamma$. We will show later that there exist elements $b \in k$ such that this orbit consists of $|\Gamma|$ points.

\end{ex}

\begin{thm}\label{diffpatching} Let $n \in \N$. 
\begin{enumerate}
 \item\label{diffpatching1} Assume that for each $P \in \Pcal$, a matrix $A_P \in F_P^{n\times n}$ is given together with a fundamental solution matrix $Y_P \in \GL_n(F_{\wp(P)})$. Let $\G_P=\Gal^\del_{Y_P}(R_P/F_P) \leq \GL_{n,K}$ be the differential Galois group  of the Picard-Vessiot ring $R_P=F_P[Y_P,Y_P^{-1}]$ for $A_P$ (see Proposition \ref{existencePVR}). Then there exists a matrix $A \in F^{n\times n}$ and a fundamental solution matrix $Y\in \GL_n(F_U)$ for $A$ such that the Picard-Vessiot ring $R=F[Y,Y^{-1}]$ over $F$ has the following property: Its differential Galois group equals the Zariski closure of the group generated by the various subgroups $\G_P$ of $\GL_{n,K}$; i.e.\ $\Gal^\del_Y(R/F)=\bar{<\G_P \ | \ P \in \Pcal >}$.
\item \label{diffpatching2} Assume that moreover a finite group $\Gamma$ acts on the differential patching data $(\Pcal,\Bcal,U)$ and assume that 
$\sigma(Y_{P^\sigma})=Y_P$ in $\GL_n(F_{\wp(P)})$ for each $P \in \Pcal$ and each $\sigma \in \Gamma$. Then the fundamental solution matrix $Y \in \GL_n(F_U)$ can be chosen such that its entries are $\Gamma$-invariant.
\end{enumerate}
\end{thm}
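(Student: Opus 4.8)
\textbf{Construction of $(A,Y)$ and the inclusion $\overline{\langle\G_P\rangle}\le\G$.}
First I would feed the given collection $(Y_P)_{P\in\Pcal}$ into the simultaneous factorization property (Theorem~\ref{patching}(a)) to obtain matrices $B_P\in\GL_n(F_P)$ and a single $Y\in\GL_n(F_U)$ with $Y=B_PY_P$ in $\GL_n(F_{\wp(P)})$ for every $P$. Setting $A:=\del(Y)Y^{-1}\in F_U^{n\times n}$ and differentiating $Y=B_PY_P$ (using $\del(Y_P)=A_PY_P$) gives $A=\del(B_P)B_P^{-1}+B_PA_PB_P^{-1}$ in $F_{\wp(P)}^{n\times n}$, whose right-hand side lies in $F_P^{n\times n}$; so the intersection property (Theorem~\ref{patching}(b)), applied entrywise, forces $A\in F^{n\times n}$. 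Then $R:=F[Y,Y^{-1}]\subseteq F_U$ is a Picard-Vessiot ring for $A$ by Proposition~\ref{existencePVR}. Since $R$ and $F_P$ both sit inside the differential field $F_{\wp(P)}$ (with constants $K$), Lemma~\ref{fieldbasechangelemma} shows $F_PR=F_P[Y,Y^{-1}]$ is a Picard-Vessiot ring over $F_P$ with $\Gal^\del_Y(F_PR/F_P)\le\Gal^\del_Y(R/F)=:\G$. But $F_P[Y,Y^{-1}]=F_P[Y_P,Y_P^{-1}]=R_P$ (as $B_P\in\GL_n(F_P)$), and Lemma~\ref{modulebasechangelemma} gives $\Gal^\del_Y(R_P/F_P)=\Gal^\del_{Y_P}(R_P/F_P)=\G_P$. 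Hence $\G_P\le\G$ for all $P$, so $\overline{\langle\G_P\mid P\in\Pcal\rangle}\le\G$.

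\textbf{The reverse inclusion.}
For equality I would pass through the Galois correspondence. Let $H\le G:=\underline{\Aut}^\del(R/F)$ be the closed subgroup with $\Psi_Y(H)=\overline{\langle\G_P\rangle}=:\Hcal$, and put $E=\Frac(R)$; it suffices to prove $E^H=F$, since then the Galois correspondence forces $H=G$, i.e.\ $\Hcal=\G$. So take a functorial invariant $a/b\in E^H$. Because each $\G_P\le\Hcal$, the element $a/b$ is invariant under the subgroup of $G$ corresponding to $\G_P$; viewing $a/b\in E\subseteq\Frac(R_P)$ and noting that every $\tau\in\Aut^\del(R_P\otimes_KS/F_P\otimes_KS)$ restricts to $R\otimes_KS$ with the same image under $\Psi_Y$, the invariance condition transfers verbatim to the full group $\underline{\Aut}^\del(R_P/F_P)$. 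The Galois correspondence for $R_P/F_P$ then yields $a/b\in F_P$ (as a subfield of $F_{\wp(P)}$). As this holds for every $P$, the intersection property gives $a/b\in F$. Thus $E^H=F$ and the reverse inclusion follows, proving part~(1).

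\textbf{$\Gamma$-invariance (part (2)).}
Now assume the $\Gamma$-action and the compatibility $\sigma(Y_{P^\sigma})=Y_P$. The $Y$ produced above need not be $\Gamma$-invariant, and measuring that failure is the point. Set $C_\sigma:=\sigma(Y)Y^{-1}\in\GL_n(F_U)$. Applying $\sigma\colon F_{\wp(P^\sigma)}\to F_{\wp(P)}$ to $Y_{P^\sigma}=B_{P^\sigma}^{-1}Y$ and using $\sigma(Y_{P^\sigma})=Y_P=B_P^{-1}Y$ gives $C_\sigma=\sigma(B_{P^\sigma})B_P^{-1}\in\GL_n(F_P)$ in each $F_{\wp(P)}$, so the intersection property places $C_\sigma$ in $\GL_n(F)$. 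A direct computation from $(\sigma\tau)(Y)=\sigma(\tau(Y))$ shows $C_{\sigma\tau}=\sigma(C_\tau)C_\sigma$, so that $\sigma\mapsto C_\sigma^{-1}$ is a $1$-cocycle of $\Gamma=\Galf(F/F^\Gamma)$ with values in $\GL_n(F)$. By the noncommutative Hilbert~90 ($H^1(\Gamma,\GL_n(F))=1$) this cocycle is a coboundary, producing $D\in\GL_n(F)$ with $\sigma(D)C_\sigma=D$ for all $\sigma$; hence $\tilde Y:=DY$ satisfies $\sigma(\tilde Y)=\sigma(D)C_\sigma Y=DY=\tilde Y$ and is $\Gamma$-invariant. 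Since $D\in\GL_n(F)$, Lemma~\ref{modulebasechangelemma} guarantees $\tilde Y$ is a fundamental solution matrix over $F$ with the same Picard-Vessiot ring $R$ and Galois group $\G$, so replacing $Y$ by $\tilde Y$ completes the proof.

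\textbf{Main obstacle.}
I expect the two substantive points to be the reverse inclusion of part~(1) — matching the functorial invariants of $R/F$ against those of each $R_P/F_P$ precisely enough that the intersection property can be invoked — and, in part~(2), recognizing the obstruction to $\Gamma$-invariance as a $\GL_n(F)$-valued $1$-cocycle and trivializing it via Hilbert~90. By contrast, the construction of $A$ and $Y$ and the inclusion $\overline{\langle\G_P\rangle}\le\G$ should be routine once the factorization and intersection properties are available.
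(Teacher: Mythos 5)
Your proposal is correct and follows essentially the same route as the paper: simultaneous factorization plus the intersection property to build $(A,Y)$, Lemmas~\ref{modulebasechangelemma} and~\ref{fieldbasechangelemma} for $\G_P\le\G$, the Galois correspondence applied to functorial invariants (transferred to each $\Autd(R_P/F_P)$ by restriction) for the reverse inclusion, and a $\GL_n(F)$-valued cocycle trivialized by Hilbert~90 for part~(2). The only differences are cosmetic: you argue the reverse inclusion directly rather than by contradiction, and your cocycle $\sigma\mapsto C_\sigma^{-1}=Y\sigma(Y)^{-1}$ is exactly the paper's $\chi$.
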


\begin{proof}
Simultaneous factorization (Theorem \ref{patching}.\ref{factorization}) implies that there exists a matrix $Y \in \GL_n(F_U)$ and matrices $B_P \in \GL_n(F_P)$ for each $P \in \Pcal$ such that $Y_P=B_P^{-1}\cdot Y$ when viewed inside $\GL_n(F_{\wp(P)})$ for each $P \in \Pcal$. We set $A=\del(Y)Y^{-1} \in F_U^{n\times n}$. For each $P \in \Pcal$, we compute inside $F_{\wp(P)}^{n\times n}$, viewing $F_U$ and $F_P$ as subfields of $F_{\wp(P)}$:
\begin{eqnarray*}
A&=&\del(Y)Y^{-1} \\
&=&\del(B_PY_P)Y_P^{-1}B_P^{-1}\\
&=&\del(B_P)B_P^{-1}+B_PA_PB_P^{-1} \in F_P^{n\times n},
\end{eqnarray*}
using that $Y_P$ is a fundamental solution matrix for $A_P$.
The intersection property (Theorem \ref{patching}.\ref{intersection}) implies that all entries of $A$ are contained in $F$. By Proposition \ref{existencePVR}, $R=F[Y,Y^{-1}]\subseteq{F_U}$ is a Picard-Vessiot ring for $A\in F^{n\times n}$ over $F$. Set $\G=\Gal^\del_Y(R/F) \leq \GL_{n,K}$. 

We first show that $\G_P \leq \G$ for all $P \in \Pcal$. Note that $R_P=F_P[Y_P,Y_P^{-1}]=F_P[Y,Y^{-1}]=F_PR$ (the compositum is taken inside $F_{\wp(P)}$) for all $P \in \Pcal$ since $Y_P=B_P^{-1}\cdot Y$. Hence 
\[\G_P=\Gal^\del_{Y_P}(R_P/F_P)=\Gal^\del_{Y}(R_P/F_P)\leq\Gal^\del_{Y}(R/F)=\G\] 
by Lemma \ref{modulebasechangelemma} and Lemma \ref{fieldbasechangelemma}. We now let $\Hcal\leq \G$ be the Zariski closure of $<\G_P \ | \ P \in \Pcal >$ in $\GL_n$. We claim that $\G=\Hcal$. Let $H\leq \underline{\Aut}^\del(R/F)$ be the inverse image of $\Hcal \le \G$ under the isomorphism
 $\Psi_Y:\Autd(R/F) \to \G$ (Proposition~\ref{thm.galoistheory}).
  By the Galois correspondence, it suffices to show that $E^H=F$, where $E=\Frac(R)$. Suppose there exists an element $a/b \in E^H\smallsetminus F$ for some $a,b \in R$. Note that $a/b \in E\subseteq F_U\subseteq F_\wp$ for all $\wp \in \Bcal$. The intersection property implies that there exists a $P \in \Pcal$ with $a/b \in F_{\wp(P)} \smallsetminus F_P$. On the other hand, $\G_P \leq \Hcal$, hence $\underline{\Aut}^\del(R_P/F_P) \leq H$.
  Indeed, $\underline{\Aut}^\del(R_P/F_P)$ can be identified with a subgroup scheme of $\underline{\Aut}^\del(R/F)$ via restriction and $\Psi_Y:\Autd(R/F) \to \G$ maps this subgroup scheme to $\Gal^\del_{Y}(R_P/F_P)=\G_P\le \Hcal=\Psi_Y(H)$.
   We conclude that $a/b$ is an element of $\Frac(R)\subseteq \Frac(R_P)$ that is invariant under $\underline{\Aut}^\del(R_P/F_P)$. The Galois correspondence applied to the Picard-Vessiot ring $R_P/F_P$ implies that $a/b$ is contained in $F_P$, a contradiction. This proves part~\ref{diffpatching1}. \\
   
To prove part~\ref{diffpatching2}, it suffices to show that there exists a $B \in \GL_n(F)$ such that $B^{-1}Y \in \GL_n(F_U^\Gamma)$, since then $\Gal^\del_Y(R/F)=\Gal^\del_{B^{-1}Y}(R/F)$ by Lemma \ref{modulebasechangelemma}.

We first claim that for each $\sigma \in \Gamma$, $Y\sigma(Y)^{-1} \in \GL_n(F_U)$ has entries in~$F$. Let $\sigma \in \Gamma$. By the intersection property, it suffices to show that $Y\sigma(Y)^{-1} \in \GL_n(F_P)$  when viewed as an element in $\GL_n(F_{\wp(P)})$ for each $P \in \Pcal$. Let $P \in \Pcal$ and set $Q=P^\sigma \in \Pcal$. By assumption, there is a differential isomorphism $\sigma \colon F_{\wp(Q)} \to F_{\wp(P)}$ restricting to $\sigma \colon F_Q \to F_P$ and restricting to $\sigma \colon F_U \to F_U$. In $\GL_n(F_{\wp(Q)})$, we have $Y=B_QY_Q$, with notation as
in the proof of part~\ref{diffpatching1}; 
hence $\sigma(Y)=\sigma(B_Q)\sigma(Y_Q)=\sigma(B_Q)\sigma(Y_{P^\sigma})=\sigma(B_Q)Y_P$ in $\GL_n(F_{\wp(P)})$. On the other hand, $Y=B_P Y_P$ and we compute inside $\GL_n(F_{\wp(P)})$: $Y\sigma(Y)^{-1}=B_PY_P(\sigma(B_Q)Y_P)^{-1}=B_P\sigma(B_Q)^{-1} \in \GL_n(F_P)$, proving the claim.

Therefore, we obtain a 1-cocycle $\chi \colon \Gamma \to \GL_n(F), \sigma \mapsto Y\sigma(Y)^{-1}$. By Hilbert's Theorem~90, $H^1(\Gamma, \GL_n(F))$ is trivial, hence there exists a $B \in \GL_n(F)$ such that for all $\sigma \in \Gamma$: $Y\sigma(Y)^{-1}=B\sigma(B)^{-1}$. This implies that $B^{-1}Y \in \GL_n(F_U)$ is $\Gamma$-invariant as we wanted to show. 
\end{proof}

Note that part~(b) of the above theorem enables us to use Lemma~\ref{lemmainvariantPVR} and hence to obtain Galois descent.

\begin{ex} $ $ \\  
(1) In this example, we apply Theorem \ref{diffpatching} to show that $\SL_2$ is  a differential Galois group over $F=\R((t))(x)$ (with derivation $\del=\del/\del x$). Let $\G_1,\G_2 \leq \SL_2$ be the subgroups of upper and lower unitary triangular matrices in $\SL_2$. Let $P_1$, $P_2$ be the closed points $x=1$ and $x=2$ on $\mathbb{P}^1_\R$ and consider the patching data $(\Pcal, \Bcal, U)$ induced by $\Pcal=\{P_1,P_2\}$. Then for $j=1,2$, $F_{P_j}=\R((x-j,t))$ and $F_{\wp(P_j)}=\R((x-j))((t))$; and $F_U=\Frac(\R[(x-1)^{-1},(x-2)^{-1}][[t]])$. Set $a_j=-t(x-j)^{-1}(x-j-t)^{-1} \in F \subseteq F_{P_j}$ for $j=1,2$.  Also write $A_{P_1}=\begin{pmatrix} 0& a_1 \\ 0 &0 \end{pmatrix}$,  $A_{P_2}=\begin{pmatrix} 0& 0 \\ a_2 &0 \end{pmatrix}$  and $Y_{P_1}=\begin{pmatrix} 1& y_1 \\ 0 &1 \end{pmatrix}$, $Y_{P_2}=\begin{pmatrix} 1& 0 \\ y_2 &1 \end{pmatrix}$, with $y_j=\sum\limits_{r=1}^\infty \frac{1}{r(x-j)^r}t^r \in F_U \subseteq F_{\wp(P_j)}$ for $j=1,2$. Then $\del(Y_P)=A_PY_P$ for both points $P \in \Pcal$, and $R_P=F_P[Y_P,Y_P^{-1}]$ is a Picard-Vessiot ring for $A_P$ over $F_P$. It is easy to see that $y_j$ is transcendental over $F_{P_j}$ and to deduce that $\Gal^\del_{Y_{P_j}}(R_{P_j}/F_{P_j})=\G_j$ in $\GL_2$ for $j=1,2$ (use Example~\ref{example Ga}). Theorem~\ref{diffpatching}\ref{diffpatching1} now implies that there exists an $A \in F^{2\times 2}$ and a $Y \in \GL_2(F_U)$ such that the Picard-Vessiot ring $R=F[Y,Y^{-1}]$ for $A$ over $F$ has differential Galois group $\Gal^\del_Y(R/F)=\bar{\left< \G_1, \G_2 \right>}=\SL_2$. \medskip

(2) As another example, let us consider $\SO_2$ in its two-dimensional representation 
$$\left\lbrace \begin{pmatrix}
  a & b \\ -b & a   \end{pmatrix} \ | \ a^2+b^2=1 \right\rbrace $$ over $\R$. We realize $\SO_2$ as a differential Galois group over $\R((t))(x)$ by using patching over $F=\C((t))(x)$ (with derivation $\del=\del/\del x$ on both fields). Consider the isomorphism over $\C$ 
 \[\psi \colon \mathbb{G}_m \to \SO_2, \ \lambda \mapsto \frac{1}{2} \begin{pmatrix} \lambda +\lambda^{-1} & i(-\lambda+\lambda^{-1}) \\
               i(\lambda-\lambda^{-1}) & \lambda +\lambda^{-1} \end{pmatrix}.
  \]  Let $P_1$, $P_2$ be the closed points $x=i$ and $x=-i$ on $\mathbb{P}^1_\C$ and consider the patching data $(\Pcal, \Bcal, U)$ induced by $\Pcal=\{P_1,P_2\}$. Then $F_{P}=\C((x\pm i,t))$ and $F_{\wp(P)}=\C((x\pm i))((t))$ for $P=P_1, P_2$, respectively, and $F_U=\Frac(\C[(x-i)^{-1},(x+i)^{-1}][[t]])$. Note that $\Gamma=\operatorname{Gal}(\C/\R)$ acts on $F$ and on $F_U$ via differential automorphisms and the non-trivial element $\sigma \in \Gamma$ induces $\del$-isomorphisms $F_{P_1}\to F_{P_2}$ and $F_{\wp(P_1)}\to F_{\wp(P_2)}$. Therefore, $\Gamma$ acts on the differential patching data $(\Pcal, \Bcal, U)$. Set $y=e^{\frac{t}{x-i}} \in F_U$ and $Y_{P_1}=\psi(y)\in \GL_2(F_U)\leq\GL_2(F_{\wp(P_1)})$. Note that $R_{P_1}=F_{P_1}[Y_{P_1},Y_{P_1}^{-1}]=F_{P_1}[y,y^{-1}]$ is a Picard-Vessiot ring over $F_{P_1}$ for the one-dimensional equation $\del(y)=\frac{-t}{(x-i)^2}y$. It can be shown that $A_{P_1}:=\del(Y_{P_1})Y_{P_1}^{-1}\in R_{P_1}^{2\times 2}$ is contained in $F_{P_1}^{2\times 2}$. Thus $R_{P_1}\subseteq F_{\wp(P_1)}$ is also a Picard-Vessiot ring for $A_{P_1}$ over $F_{P_1}$. Set $Y_{P_2}=\sigma(Y_{P_1})$. Then $A_{P_2}:=\del(Y_{P_2})Y_{P_2}^{-1}=\sigma(A_{P_1}) \in F_{P_2}^{2\times 2}$, and $R_{P_2}=F_{P_2}[Y_{P_2},Y_{P_2}^{-1}]$ is a Picard-Vessiot ring for $A_{P_2}$ over $F_{P_2}$. It is easy to see that $y$ is transcendental over $F_{P_1}$ hence $\Gal^\del_{y}(R_{P_1}/F_{P_1})={\mathbb G}_{m,\C}$ and thus $\Gal^\del_{Y_{P_1}}(R_{P_1}/F_{P_1})=\psi({\mathbb G}_{m,\C})=(\SO_2)_\C$. Similarly, $\Gal^\del_{Y_{P_2}}(R_{P_2}/F_{P_2})=(\SO_2)_\C$. Theorem~\ref{diffpatching}\ref{diffpatching1} implies that there exists an $A \in F^{2\times 2}$ with fundamental solution matrix $Y \in \GL_2(F_U)$ and Picard-Vessiot ring $R=F[Y,Y^{-1}]$ such that $\Gal^\del_Y(R/\C((t))(x))=(\SO_2)_\C$. By Theorem~\ref{diffpatching}\ref{diffpatching2}, we can moreover assume that $Y$ is $\Gamma$-invariant. Lemma~\ref{lemmainvariantPVR} then implies that $R_0:=\R((t))(x)[Y,Y^{-1}]$ is a Picard-Vessiot ring over $\R((t))(x)$ with $\Gal^\del_Y(R_0/\R((t))(x))=\SO_2$.  \medskip
  
(3) Let $T\leq \GL_n$ be a one-dimensional torus defined over $\Q((t))$ that splits over the finite extension $\Q((t))(\sqrt{t})=\Q((s))$ with $s:=\sqrt{t}$, i.e., there is an isomorphism $\psi \colon \mathbb{G}_m \to T$ defined over $\Q((s))$. Set $K=\Q((s))$ and $F=K(x)$ with derivation $\del=\del/\del x$. Then $K/\Q((t))$ is Galois with  Galois group $\Gamma \cong C_2$ and $F/\Q((t))(x)$ is also Galois with Galois group $\Gamma$. Let $\sigma \in \Gamma$ be the non-trivial automorphism. Consider the change of variables $z=x/s \in F$. Then $F=K(z)$ with $\del(z)=1/s$ and $\sigma(z)=-z$. Let $P_1$, $P_2$ be the closed points $z=1$ and $z=-1$ on the $z$-line $\mathbb{P}^1_\Q$ and consider the patching data $(\Pcal, \Bcal, U)$ induced by $\Pcal=\{P_1,P_2\}$. Then $F_{P}=\Q((z\pm 1,s))$ and $F_{\wp(P)}=\Q((z\pm 1))((s))$ for $P=P_1, P_2$, respectively, and $F_U=\Frac(\Q[(z-1)^{-1},(z+1)^{-1}][[s]])$. As explained in Example \ref{example action}(b), $\Gamma$ acts on $F$ and on $F_U$ via differential automorphisms and $\sigma$ induces $\del$-isomorphisms $F_{P_1}\to F_{P_2}$ and $F_{\wp(P_1)}\to F_{\wp(P_2)}$. Therefore, $\Gamma$ acts on the differential patching data $(\Pcal, \Bcal, U)$ permuting $P_1$ and $P_2$. We set $y=e^{\frac{s}{z-1}} \in F_{\wp(P_1)}$ which is transcendental over $F_{P_1}$. Note that $\del(y)y^{-1}= \frac{1}{s}\cdot \frac{-s}{(z-1)^2}\in F_{P_1}$, since $\del(z)=1/s$. We define $Y_{P_1}=\psi(y) \in T(F_{\wp (P_1)})$ and $Y_{P_2}=\sigma(Y_{P_1}) \in T(F_{\wp (P_2)})$ and proceed as in the previous example. Eventually, we
obtain an $A \in F^{n\times n}$ with fundamental solution matrix $Y \in \GL_n(F_U)$ and Picard-Vessiot ring $R=F[Y,Y^{-1}]$ such that $\Gal^\del_Y(R/\Q((s))(x))=T_K$ and moreover $R_0=\Q((t))(x)[Y,Y^{-1}]$ is a Picard-Vessiot ring over $\Q((t))(x)$ with $\Gal^\del_Y(R_0/\Q((t))(x))=T$. 

\end{ex}

\end{section}

\begin{section}{Constructing extensions}

\subsection{Linearizations}
In this subsection, $K$ denotes a field of characteristic zero.
We begin by showing that every linear algebraic group admits a finite set of ``simple'' generating subgroups, after passing to a finite field extension.

\begin{prop}\label{groupsplits}
Let $\G$ be a linear algebraic group defined over $K$. Then there exists a finite extension $L/K$ and finitely many closed subgroups $\G_1,\dots,\G_r \leq \G_L$ such that $\G_L$ is generated by $\G_1,\dots, \G_m$ and such that each $\G_i$ is isomorphic (over $L$) to either $\mathbb{G}_a$ or $\mathbb{G}_m$ or a finite (constant) cyclic group.
\end{prop}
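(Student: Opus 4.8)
The plan is to deduce the statement from its analogue over the algebraically closed field $\bar K$ and then to descend to a finite subextension. Over $\bar K$ it is known that $\G_{\bar K}$ is generated by finitely many closed subgroups $\Hcal_1,\dots,\Hcal_s \leq \G_{\bar K}$, each isomorphic over $\bar K$ to $\Ga$, to $\Gm$, or to a finite constant cyclic group (this is the fact recalled in the introduction). I take this as the input; the whole point of the proposition is to realize this picture already over a \emph{finite} extension $L/K$.

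First I would descend the subgroups. Since $\bar K$ is the directed union of the finite extensions of $K$ and each $\Hcal_j$ is defined by finitely many equations, there is a finite extension $L_0/K$ over which every $\Hcal_j$ descends to a closed subgroup $\G_j\le \G_{L_0}$ with $(\G_j)_{\bar K}=\Hcal_j$. I then enlarge $L_0$ to a finite extension $L$ so that each $\G_j$ already has the required standard form:
\begin{enumerate}[(1)]
\item if $\Hcal_j\cong\Ga$, then $\G_j$ is a one-dimensional connected unipotent group, which over a field of characteristic zero is isomorphic to $\Ga$, so no enlargement is needed;
\item if $\Hcal_j\cong\Gm$, then $\G_j$ is a one-dimensional torus, which splits over an extension of degree at most $2$; I adjoin the corresponding (finitely many) quadratic extensions so that $\G_j\cong\Gm$ over $L$;
\item if $\Hcal_j$ is finite cyclic, then $\G_j$ is a finite \'etale cyclic group scheme (\'etale because the characteristic is zero), and I enlarge $L$ by the finite field generated by the coordinates of a generator so that $\G_j$ becomes a finite constant cyclic group over $L$.
\end{enumerate}
After these finitely many enlargements, $L/K$ is still finite and every $\G_j$ is of the desired type over $L$.

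It remains to see that $\G_1,\dots,\G_r$ (with $r=s$) generate $\G_L$. Let $\Hcal\le\G_L$ be the smallest closed $L$-subgroup containing all the $\G_j$, that is, the intersection of all such subgroups; this is again a closed $L$-subgroup. Its base change $\Hcal_{\bar K}$ is a closed subgroup of $\G_{\bar K}$ containing every $\Hcal_j=(\G_j)_{\bar K}$, hence containing the subgroup they generate, which is all of $\G_{\bar K}$. Thus $\Hcal_{\bar K}=\G_{\bar K}$, and by Remark~\ref{remark equal groups} (equality of linear algebraic groups can be tested on $\bar K$-points, as these are dense) we conclude $\Hcal=\G_L$, as desired.

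The content of the argument lies entirely in the descent step: the generation statement transports down for free once one has the density criterion of Remark~\ref{remark equal groups}, but one must arrange a \emph{single} finite extension $L$ that simultaneously defines all the subgroups, splits every one-dimensional torus among them, and trivializes the Galois action on every finite cyclic piece. The case of the multiplicative subgroups is the one to watch, since a priori a form of $\Gm$ over $L_0$ need not be split; the observation that one-dimensional tori split over a quadratic extension is what keeps $L$ finite.
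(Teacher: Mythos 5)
Your descent argument is correct, and in fact it is more careful than the paper's own treatment of that step (the paper dismisses it with the single sentence ``It suffices to show this for $L$ replaced by an algebraic closure $\bar K$ of $K$''): descending each subgroup to a finite extension, splitting the one-dimensional tori over quadratic extensions, making the finite \'etale cyclic pieces constant by adjoining the coordinates of a generator, and transporting generation downward via density of $\bar K$-points are all sound. The problem is what you take as input. The generation statement over $\bar K$ --- that $\G_{\bar K}$ is generated by finitely many closed subgroups each isomorphic to $\Ga$, $\Gm$, or a finite cyclic group --- is not an external theorem that the introduction ``recalls''; it is precisely the content of Proposition~\ref{groupsplits} itself in the case $L=\bar K$, previewed in the introduction with no citation because this proposition is where it gets proved. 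Relying on it makes your argument circular within the paper, and it is not a quotable textbook statement: it genuinely requires the Borel--Serre theorem (which is special to characteristic zero) together with structure theory of connected groups.

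What the paper's proof actually consists of, and what is missing from yours, is exactly this $\bar K$ case: by Borel--Serre (\cite[Lemme 5.11]{BorelSerre}), $\G_{\bar K}$ is generated by its identity component together with a finite subgroup $H$, and $H$ is generated by finitely many finite constant cyclic subgroups; for the connected part, Springer's Theorem~6.4.5 gives generation by finitely many centralizers of maximal tori, each of the form $C(T)=T\cdot C_u$, where $T$ is a product of copies of $\Gm$ and the unipotent part $C_u$ is generated by finitely many Zariski closures $G(x)$ of cyclic subgroups, each such closure being isomorphic to $\Ga$ (it is $\Ga^m$ by \cite[Lemma 3.4.7.c]{Springer}, and $m=1$ since $\Ga^m$ has no dense cyclic subgroup for $m\geq 2$). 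To complete your proof you would need to supply this argument, or an equivalent one, rather than cite the paper's introduction; your descent step would then serve as a rigorous substitute for the paper's one-sentence reduction, and the two halves together would give a full proof.
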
 

\begin{proof}
It suffices to show this for $L$ replaced by an algebraic closure $\bar K$ of $K$. By the theorem of Borel-Serre \cite[Lemme 5.11]{BorelSerre}, $\G_{\bar K}$ is generated by its identity component together with some finite group $H\leq \G_{\bar K}$. Clearly, $H$ is generated by finitely many finite constant cyclic subgroups (defined over $\bar K$). We may thus assume that $\G_{\bar K}$ is connected. 
 
Theorem 6.4.5 in \cite{Springer} implies that $\G_{\bar K}$ is generated by the centralizers of finitely many maximal tori $T\leq \G_{\bar K}$. Such a centralizer $C(T)$ is a connected closed subgroup of $\G_{\bar K}$ and it is nilpotent with maximal torus $T$ (\cite[Prop. 6.4.2]{Springer}). Let $C_u$ denote the set of unipotent elements in $C(T)$ (which is a closed, connected subgroup). Then $C=TC_u$. Now $T$ is isomorphic to a direct product of copies of $\mathbb{G}_m$ (over $\bar K$) and is thus generated by finitely many subgroups that are isomorphic to $\mathbb{G}_m$. 

It remains to show that $C_u$ is generated by finitely many subgroups that are isomorphic to $\mathbb{G}_a$. For $x \in C_u(\bar{K})$, let $G(x)\leq C_u$ denote the Zariski closure of the subgroup generated by $x$. For each $x\neq 1$, $G(x)$ is an infinite, closed, abelian, unipotent and thus also connected subgroup of $C_u$ (defined over $\bar K$).  Since $C_u$ is finite dimensional, there exist finitely many elements $x \in C_u(\bar{K})$ such that the finitely many subgroups $G(x)$ generate $C_u$. Now each of the groups $G(x)$ is isomorphic to $\mathbb{G}_a^m$ by \cite[Lemma 3.4.7.c]{Springer} for some $m \in \N$. Actually, $m=1$ since $\mathbb{G}_a^m$ does not contain a dense cyclic subgroup for $m\geq 2$. 
\end{proof}

In order to construct Picard-Vessiot rings whose differential Galois groups are given subgroups of $\GL_n$ that are isomorphic to $\mathbb{G}_a$, $\mathbb{G}_m$, or a finite cyclic group, we use the following statement that allows us to modify the representation of the differential Galois group. It is based on standard Tannakian arguments.

\begin{prop}\label{tannaka}
Let $R$ be a Picard-Vessiot ring over a differential field $F$ with field of constants~$K$, and let $G$ be its differential Galois group. Suppose that $\rho:G\rightarrow \GL_{n,K}$ is any linear representation. Then there exists a Picard-Vessiot ring $R'\subseteq R$ over $F$ and a fundamental solution matrix $Y'\in \GL_n(R')$ such that $\Gal_{Y'}^\partial(R'/F)=\rho(G)$. If moreover $\rho$ is faithful, then $R'=R$.
\end{prop}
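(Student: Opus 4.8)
The plan is to fix a fundamental solution matrix $Y_0 \in \GL_m(R)$ for $R$ and work through its linearization. Via the closed embedding $\Psi_{Y_0}\colon G \xrightarrow{\sim} \mathcal{G}_0 := \Gal^\del_{Y_0}(R/F) \le \GL_{m}$ of Proposition~\ref{thm.galoistheory}, I would regard $\rho$ as a morphism of linear algebraic groups $\mathcal{G}_0 \to \GL_{n,K}$, so that its image $\rho(\mathcal{G}_0) = \rho(G)$ is automatically a closed subgroup of $\GL_{n,K}$. The whole problem then reduces to producing a matrix $Y' \in \GL_n(R)$ with the equivariance property that, for every $K$-algebra $S$ and every $\sigma \in \underline{\Aut}^\del(R\otimes_K S/F\otimes_K S) = G(S)$,
\[ (Y'\otimes 1)^{-1}\,\sigma(Y'\otimes 1) = \rho(Z_\sigma), \qquad Z_\sigma := \Psi_{Y_0}(S)(\sigma) \in \mathcal{G}_0(S). \]
Granting such a $Y'$, everything else is formal.

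To construct $Y'$ I would invoke the standard Tannakian formalism of Picard--Vessiot theory over a not-necessarily-closed field of constants (as in \cite{Dyc}): the differential modules over $F$ that are trivialized by $R$ form a neutral Tannakian category with fiber functor $M \mapsto (M\otimes_F R)^{\del}$ and Tannakian group $G$, and this category is equivalent to $\operatorname{Rep}_K(G)$. Under this equivalence the representation $(\rho, K^n)$ corresponds to a differential module $M_\rho$ over $F$ of rank $n$ whose solution space $(M_\rho\otimes_F R)^{\del}$ is $K^n$ with $G$ acting through $\rho$. Choosing an $F$-basis of $M_\rho$ and expressing the standard $K$-basis of the solution space in it yields a fundamental solution matrix $Y' \in \GL_n(R)$; its entries lie in $R$ precisely because $M_\rho$ is trivialized by $R$, and the displayed relation is exactly a restatement of the fact that $G$ acts on solutions through $\rho$. (Equivalently and more concretely, the same $Y'$ can be obtained from the universal matrix $\rho(W)$, where $W := (Y_0\otimes 1)^{-1}(1\otimes Y_0) \in \mathcal{G}_0(C_{R\otimes_F R})$, by means of the torsor isomorphism $R\otimes_F R \cong R\otimes_K C_{R\otimes_F R}$ of the torsor theorem.)

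With $Y'$ in hand, set $E := \Frac(R)$ and $A' := \del(Y')(Y')^{-1} \in R^{n\times n}$. Because $\rho(Z_\sigma)$ has entries in the constants $S \subseteq C_{R\otimes_K S}$, differentiating the equivariance relation gives $\sigma(A'\otimes 1) = A'\otimes 1$ for all $\sigma$; hence every entry of $A'$ lies in the functorial invariants $E^G$, which equal $F$ by the Galois correspondence applied to the full group, so $A' \in F^{n\times n}$. By Proposition~\ref{existencePVR}, $R' := F[Y', (Y')^{-1}] \subseteq R$ is then a Picard--Vessiot ring for $A'$ over $F$ with fundamental solution matrix $Y'$. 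Moreover the equivariance relation says precisely that $\Psi_{Y'}(S)(\sigma) = \rho(Z_\sigma)$, and as $Z_\sigma$ runs over $\mathcal{G}_0(S) = G(S)$ its image is $\rho(\mathcal{G}_0)(S)$; therefore $\Gal^\del_{Y'}(R'/F) = \rho(\mathcal{G}_0) = \rho(G)$.

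Finally I would treat the faithful case. The subring $R'$ is $G$-stable, and the restriction homomorphism $G \to \underline{\Aut}^\del(R'/F)$ corresponds, under $\Psi_{Y_0}$ and $\Psi_{Y'}$, to $\rho\colon \mathcal{G}_0 \to \rho(\mathcal{G}_0)$; its kernel is $\underline{\Aut}^\del(R/\Frac(R'))$. If $\rho$ is faithful this kernel is trivial, so by the Galois correspondence $\Frac(R') = E^{\{1\}} = E$; since a Picard--Vessiot ring is determined by its fraction field (the remark following Proposition~\ref{differentiallyfinite}), we get $R' = R$. The one genuinely substantive step is the construction in the second paragraph: exhibiting a solution matrix $Y'$ with entries in $R$ whose linearized Galois action is $\rho$ \emph{on the nose}, rather than merely up to $\GL_n(K)$-conjugacy. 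This is exactly where the Tannakian equivalence --- equivalently, the torsor structure of $\Spec(R)$ guaranteed by the torsor theorem --- is indispensable; once $Y'$ is available, verifying that $R'$ is a Picard--Vessiot ring, that its Galois group is $\rho(G)$, and that $R'=R$ when $\rho$ is faithful are all routine.
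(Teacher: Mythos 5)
Your proposal follows essentially the same route as the paper's proof: the heart of both arguments is the Tannakian equivalence between the category of differential modules trivialized by $R$ and the representation category of $G$ (the paper works with the category $\{\!\{M\}\!\}$ generated by the module $M$ of $R$ and cites Amano--Masuoka, Theorem~4.10), which produces the module $M_\rho$ and hence the fundamental solution matrix $Y'\in\GL_n(R)$; and the faithful case is handled identically, via the Galois correspondence plus the characterization of a Picard--Vessiot ring as the differentially finite elements of its fraction field (Proposition~\ref{differentiallyfinite}). Your explicit verifications that $A'=\del(Y')(Y')^{-1}$ lies in $F^{n\times n}$ and that $R'=F[Y',(Y')^{-1}]$ is a Picard--Vessiot ring via Proposition~\ref{existencePVR} are correct and make explicit what the paper absorbs into its citation.

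However, one step is justified by a false claim. You assert that ``as $Z_\sigma$ runs over $\mathcal{G}_0(S)=G(S)$ its image is $\rho(\mathcal{G}_0)(S)$.'' For group schemes over a field that is not algebraically closed this fails: for $\rho\colon\Gm\to\Gm$, $x\mapsto x^2$, the image of $\Gm(\Q)$ is the set of squares, not $\rho(\Gm)(\Q)=\Q^\times$. In general $\rho(G(S))$ is properly contained in $\rho(G)(S)$, and likewise the restriction map $G(S)\to\Autd(R'/F)(S)$ need not be surjective, so the equality $\Gal^\del_{Y'}(R'/F)=\rho(G)$ is not a pure point-chasing consequence of the equivariance relation. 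What that relation does give is one inclusion: since $\Psi_{Y'}(S)(\sigma|_{R'\otimes_K S})=\rho(Z_\sigma)\in\Gal^\del_{Y'}(R'/F)(S)$ for all $S$ and $\sigma$, the homomorphism $\rho\circ\Psi_{Y_0}$ factors through the closed subgroup $\Gal^\del_{Y'}(R'/F)\leq\GL_{n,K}$, hence its scheme-theoretic image satisfies $\rho(G)\leq\Gal^\del_{Y'}(R'/F)$. The reverse inclusion needs an actual argument, for instance: any element of $\Frac(R')$ that is functorially invariant under $\rho(G)$ is in particular fixed by every $\sigma|_{R'\otimes_K S}$, hence lies in $E^G=F$; the Galois correspondence for $R'/F$ then forces $\rho(G)=\Gal^\del_{Y'}(R'/F)$. (Alternatively, $\ker\rho$ is normal in $G$, so \cite[Proposition~4.3]{Dyc}, quoted before Lemma~\ref{closed}, identifies $\Frac(R')=E^{\ker\rho}$ as the fraction field of a Picard--Vessiot ring with group $G/\ker\rho\cong\rho(G)$.) The paper sidesteps this point because the cited Tannakian theorem asserts directly that the induced morphism $G\to\Gal^\del_{Y_N}(R_N/F)$ is equivalent to $S(N)$, which already packages the required surjectivity. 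With this repair --- which uses only tools you invoke elsewhere --- your argument is complete and coincides with the paper's.
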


\begin{proof} Consider the differential equation associated to $R$ and let $M$ be the corresponding differential module over $F$ (\cite{singervanderput}, discussion preceding Lemma~1.7). Let $\{\!\{M\}\!\}$ be the full subcategory of the category of differential modules over $F$ generated by $M$ (i.e., $\{\!\{M\}\!\}$ is the smallest full subcategory that contains $M$ and is closed under subquotients, finite direct sums, tensor products, and duals). Further, let $\operatorname{Repr}_G$ denote the category of finite dimensional $K$-representations of $G$. The Picard-Vessiot extension $R$ determines an equivalence of symmetric tensor categories $S:\{\!\{M\}\!\}\rightarrow \operatorname{Repr}_G$ by \cite{amano-masuoka}, Theorem~4.10.  Here, for an object $N$ in $\{\!\{M\}\!\}$, $S(N)$ is the solution space $\operatorname{ker}(\partial,R\otimes_FN)$ with $G$-action induced by the action of $G$ on $R$, and trivial on $N$. Since $R$ is a Picard-Vessiot ring for $M$, $\operatorname{dim}_K(S(N))=\operatorname{dim}_F(N)$ for all $N$; hence there exists a fundamental solution matrix $Y_N$ over $R$ generating a Picard-Vessiot ring $R_N\leq R$, and the induced morphism $G\rightarrow  \Gal_{Y_N}^\partial(R_N/F)$ is equivalent to $S(N)$. Applying this to an object 
$M'$ in $\{\!\{M\}\!\}$ such that $S(M')$ is equivalent to the given representation $\rho$ yields the first claim. If $\rho$ is faithful then the fraction fields of $R'$ and $R$ are equal by the Galois correspondence. Since a Picard-Vessiot ring is characterized as the set of differentially finite elements in a Picard-Vessiot extension (see Proposition~\ref{differentiallyfinite}), this implies $R'=R$.
\end{proof}

\subsection{Building blocks}\label{building blocks subsection}

As before, we fix a field $k$ of characteristic zero.
Next, we construct explicit extensions with differential Galois group $\Ga$, $\Gm$ and cyclic group $C_r$ of order $r$, via the following lemmas.  For the $\Ga$ case, we have:

\begin{lem} \label{Ga}
Let $c \in k((t))^\times$ and consider the derivation $\partial =c \frac{\partial}{\partial x}$ on the fields $k((x,t))\subseteq k((x))((t))$. Set $y=-\log(1- t/x):= \sum_{r=1}^\infty x^{-r}t^r/r \in k((x))((t))$. Then $R=k((x,t))[y]$ is a Picard-Vessiot ring over $k((x,t))$ with fundamental solution matrix $Y=\begin{pmatrix}
1&y\\0&1
\end{pmatrix}$, and $\Gal^\del_Y(R/k((x,t)))$ is the image of $\mathbb{G}_a$ in its two-dimensional representation $\begin{pmatrix}
1&*\\0&1
\end{pmatrix}$.
\end{lem}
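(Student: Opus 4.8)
The plan is to verify the three defining conditions of a Picard-Vessiot ring for the given matrix $A$ by applying Proposition~\ref{existencePVR}, and then to compute the differential Galois group using Example~\ref{example Ga}. First I would record the setup: with $\del = c\frac{\del}{\del x}$, the element $y = \sum_{r=1}^\infty x^{-r}t^r/r$ lies in $k((x))((t))$, and I would compute $\del(y)$. Differentiating term by term, $\frac{\del}{\del x}(x^{-r}) = -r x^{-r-1}$, so $\frac{\del}{\del x}(y) = -\sum_{r=1}^\infty x^{-r-1}t^r = -\frac{t/x^2}{1-t/x} = \frac{-t}{x(x-t)}$, and hence $\del(y) = c\cdot\frac{-t}{x(x-t)} =: a \in k((x,t))$. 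Thus $y$ satisfies the inhomogeneous equation $\del(y) = a$ with $a$ in the base field, which is exactly the situation of Example~\ref{example Ga} with $A = \left(\begin{smallmatrix} 0 & a \\ 0 & 0\end{smallmatrix}\right)$ and $Y = \left(\begin{smallmatrix} 1 & y \\ 0 & 1\end{smallmatrix}\right)$ a fundamental solution matrix.

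Next I would invoke Proposition~\ref{existencePVR} with $F = k((x,t))$ and $L = k((x))((t))$. The key hypothesis to check is that the extension $L/F$ introduces no new constants, i.e.\ $C_L = C_F = k((t))$; this is part of the ambient patching setup recorded earlier (the fields $F_P \subseteq F_{\wp(P)}$ all have the same constants $K = k((t))$). Granting this, Proposition~\ref{existencePVR} immediately yields that $R = F[Y,Y^{-1}] = k((x,t))[y]$ is a Picard-Vessiot ring for $A$ over $F$ (note $Y^{-1} = \left(\begin{smallmatrix} 1 & -y \\ 0 & 1\end{smallmatrix}\right)$, so $F[Y,Y^{-1}] = F[y]$ as claimed).

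For the Galois group computation, I would apply Example~\ref{example Ga} directly: since a fundamental solution matrix of the stated shape exists in an extension with no new constants, the example shows $\Gal^\del_Y(R/F)$ is a subgroup scheme of $\Ga$ in its two-dimensional representation, and because $\Ga$ has no proper nontrivial closed subgroups in characteristic zero, the group is either all of $\Ga$ or trivial. To rule out the trivial case it suffices to show $y \notin F = k((x,t))$, equivalently that $R/F$ is a nontrivial (transcendental) extension. The cleanest argument is that $y$ is transcendental over $F$: if $y$ were algebraic over $F$, then $\Frac(R) = F(y)$ would be algebraic over $F$, forcing $\trd(\Frac(R)/F) = 0 = \dim(\Gal^\del_Y(R/F))$, so the group would be finite, contradicting that any nontrivial closed subgroup scheme of $\Ga$ is one-dimensional. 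Hence it is enough to exhibit that $y \notin k((x,t))$.

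The main obstacle is this last transcendence (or non-membership) claim, which requires comparing the two-variable Laurent series field $k((x,t))$ with the iterated field $k((x))((t))$. The point is that an element of $k((x,t))$, when expanded as a series in $t$ with coefficients in $k((x))$, has coefficients whose $x$-valuations are bounded below uniformly (the $t$-adic expansion of a two-variable Laurent series has coefficients lying in a single $x$-pole order); but the coefficient of $t^r$ in $y$ is $x^{-r}/r$, whose pole order in $x$ grows without bound as $r \to \infty$. Therefore $y \in k((x))((t)) \smallsetminus k((x,t))$, so $y \notin F$ and the extension is nontrivial. Combining this with Example~\ref{example Ga} gives $\Gal^\del_Y(R/F) = \Ga$ in its two-dimensional upper-triangular representation, completing the proof.
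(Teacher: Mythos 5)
Your setup and reduction coincide with the paper's: you compute $\del(y)=-ct/(x(x-t))\in k((x,t))$, invoke Proposition~\ref{existencePVR} together with Example~\ref{example Ga} (using that $C_{k((x))((t))}=k((t))$ from the patching setup), and correctly reduce the entire lemma to the single claim $y\notin k((x,t))$. Up to that point the argument is sound and matches the paper.

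The final step, however, contains a genuine gap: the membership criterion you use for $k((x,t))$ is false. You claim that an element of $k((x,t))$, expanded $t$-adically as $\sum_r f_r(x)t^r$ with $f_r\in k((x))$, has $x$-valuations of the $f_r$ bounded below uniformly in $r$. That holds for the subring $k[[x,t]][x^{-1},t^{-1}]$, but not for the fraction field $k((x,t))$: for example
\[
\frac{x}{x-t}\;=\;\frac{1}{1-t/x}\;=\;\sum_{r=0}^\infty x^{-r}t^r\;\in\;k((x,t)),
\]
whose coefficient of $t^r$ is $x^{-r}$, of unbounded pole order. (Here $x-t$ is a prime of $k[[x,t]]$ dividing no monomial $x^Nt^M$, so this element really lies outside $k[[x,t]][x^{-1},t^{-1}]$; your implicit identification of $k((x,t))$ with that subring is where the error enters.) Worse, this counterexample differs from your $y=\sum_{r\geq 1}x^{-r}t^r/r$ only by the harmonic factors $1/r$, so no criterion based on pole-order growth alone can separate $y$ from $k((x,t))$; a correct proof must use the specific coefficients or the differential equation itself. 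The paper does the latter: it uses the other embedding $k((x,t))\subseteq K((x))=k((t))((x))$ (expanding in powers of $x$ rather than $t$) and a residue obstruction --- the derivative of any Laurent series $\sum_i a_ix^i\in K((x))$ has vanishing $x^{-1}$-coefficient, whereas $\del(y)=\frac{c}{x}\cdot\frac{1}{1-x/t}=cx^{-1}\bigl(1+x/t+x^2/t^2+\cdots\bigr)$ has $x^{-1}$-coefficient $c\neq 0$ in $K((x))$. Hence no element of $K((x))$, in particular no element of $k((x,t))$, can solve the equation, which is exactly the non-membership your argument still needs.
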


\begin{proof}
Note that $y$ satisfies the differential equation
$\partial(y) 
= \frac{c}{x}\cdot\frac{1}{1-x/t} \in k((x,t))$, hence $R\subseteq k((x))((t))$ is a Picard-Vessiot ring  over $k((x,t))$ with $\Gal^\del_Y(R/k((x,t)))\leq\mathbb{G}_a$ in its two-dimensional representation (see Example \ref{example Ga}). To see that this containment is an equality, it suffices to show that the differential Galois group is nontrivial. By the Galois  correspondence, this is  equivalent to showing that $y$ does not lie in $k((x,t))$. We will show by contradiction that $y$ does not even lie in the overfield $K((x))$ of $k((x,t))$ (where $K=k((t))$ as above).

If $y = \sum_{i=m}^\infty a_i x^i$ with $a_i \in K$, then 
\[\partial(y) = \sum_{i=m}^\infty ia_i x^{i-1}.\] 
Here there is no term of degree $-1$. But the term of lowest degree in 
\[\frac{c}{x}\cdot\frac{1}{1-x/t} = cx^{-1} \bigl(1 + x/t + x^2/t^2 + \cdots\bigr)\]
has degree $-1$.  Thus $\partial(y)$ cannot equal $\frac{c}{x}\cdot\frac{1}{1-x/t}$, and this is a contradiction. 
\end{proof}

To treat the $\mathbb{G}_m$ case, we first show a generalization of Theorem~2.4 in \cite{Voelklein} (where $k$ was assumed algebraically closed).

\begin{lem} \label{structurefiniteext}
Let $\kappa$ be a field of characteristic zero, let $K = \kappa((\pi))$, and let $L$ be a finite field extension of $K$.  Then there is a finite field extension $\kappa'$ of $\kappa$ and a positive integer $e$ such that $L$ is contained in the field extension $\kappa'((\pi))(\pi^{1/e})=\kappa'((\pi^{1/e}))$ of $K$.
\end{lem}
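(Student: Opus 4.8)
The goal is to show that a finite extension $L/K$ of $K=\kappa((\pi))$ is contained in $\kappa'((\pi^{1/e}))$ for a suitable finite $\kappa'/\kappa$ and integer $e\geq 1$. The plan is to reduce to the case where $L/K$ is normal (indeed Galois), and then analyze such an extension by separating its ramified and residually-trivial parts using the structure theory of complete discretely valued fields of equal characteristic zero.

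\medskip

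First I would embed $L$ in its Galois closure $\tilde L/K$, since it suffices to prove the statement for $\tilde L$ (any subfield of $\kappa'((\pi^{1/e}))$ containing $K$ automatically works for $L$). So I assume $L/K$ is finite Galois. Let $\kappa_L$ denote the residue field of the complete discrete valuation on $L$ extending the $\pi$-adic valuation on $K$; then $\kappa_L/\kappa$ is a finite extension, and I set $\kappa'=\kappa_L$ (or its Galois closure over $\kappa$, to be safe). By the classical structure theorem for complete discretely valued fields of equal characteristic zero (Cohen structure theorem), $L$ is itself a Laurent series field $\kappa_L((\varpi_L))$ where $\varpi_L$ is a uniformizer of $L$. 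The ramification index $e$ is defined by $v_L(\pi)=e$, so that $\pi = u\,\varpi_L^{e}$ for a unit $u\in \kappa_L[[\varpi_L]]^\times$.

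\medskip

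The next step is to produce a uniformizer of $L$ of the clean form $\pi^{1/e}$ so that $L$ sits inside $\kappa'((\pi^{1/e}))$. The point is that $\pi^{1/e}$ (any fixed $e$-th root) and $\varpi_L$ differ by a unit, and in the equal-characteristic-zero setting one can adjust $\varpi_L$ by a unit to make it an exact $e$-th root of $\pi$: writing $\pi = u\,\varpi_L^e$ with $u$ a unit whose residue is some nonzero constant, one extracts an $e$-th root of $u$ inside $\kappa'[[\varpi_L]]^\times$ (after possibly enlarging $\kappa'$ to contain the $e$-th roots of the residue of $u$ and a primitive $e$-th root of unity), and replaces $\varpi_L$ by $u^{1/e}\varpi_L$. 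This new uniformizer is a genuine $e$-th root of $\pi$, hence may be identified with $\pi^{1/e}$, giving $L = \kappa'((\pi^{1/e}))$, which is the desired conclusion (noting $\kappa'((\pi))(\pi^{1/e})=\kappa'((\pi^{1/e}))$ as an identity of fields).

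\medskip

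The main obstacle I anticipate is the unit-adjustment step: extracting an $e$-th root of the unit $u\in \kappa'[[\varpi_L]]^\times$ inside the power series ring. This is where characteristic zero and the enlargement of $\kappa'$ are essential. Concretely, one writes $u = c(1+m)$ where $c\in\kappa'^\times$ is the residue (so $c$ must have an $e$-th root in $\kappa'$, forcing the enlargement) and $m$ is in the maximal ideal; then $(1+m)^{1/e}$ is defined by the binomial series, which converges $\varpi_L$-adically precisely because $e$ is invertible. Keeping careful track of which finite enlargements of $\kappa$ are needed — the residue field $\kappa_L$, the $e$-th roots of unity, and the $e$-th root of the residue unit $c$ — and checking that all of these together still constitute a single finite extension $\kappa'/\kappa$ is the bookkeeping that makes the argument go through. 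Everything else is the standard structure theory of complete discrete valuation fields, which I would invoke rather than reprove.
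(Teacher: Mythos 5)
Your proof is correct, but it takes a genuinely different route from the paper's. The paper never invokes the Cohen structure theorem for $L$ itself: it passes to the Galois closure, forms the integral closure $S$ of $R=\kappa[[\pi]]$ in $L$, adjoins $\pi^{1/e}$ (where $e$ is the ramification index) to get $R'=R[\pi^{1/e}]$, and then applies Abhyankar's Lemma (cited from SGA~1) to conclude that the integral closure $S'$ of the compositum of $R'$ and $S$ is \emph{unramified} over $R'$; a degree count then identifies $S'$ with $\kappa'[[\pi^{1/e}]]$, where $\kappa'$ is the residue field of $S'$. Your argument instead carries out the ramification-killing step by hand: writing $\pi=u\,\varpi_L^{e}$ and replacing $\varpi_L$ by $u^{1/e}\varpi_L$ is precisely the equal-characteristic, tame case of Abhyankar's Lemma, with the $e$-th root of the unit extracted via the binomial series (equivalently Hensel's lemma) after enlarging the residue field. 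What the paper's route buys is brevity, since the key input is outsourced to a citation; what yours buys is self-containedness, at the cost of invoking the Cohen structure theorem --- and in a slightly stronger form than you state: you need a coefficient field of $L$ \emph{containing the given copy of} $\kappa$, so that the resulting containment $L\subseteq\kappa'((\pi^{1/e}))$ is an extension of $K=\kappa((\pi))$ in the coefficient-wise sense the lemma requires (this is standard in equal characteristic zero, by a Zorn/Hensel argument, but worth saying). Two minor points: your reduction to the Galois case is harmless but never actually used afterwards, since your analysis applies to any finite extension of a complete discretely valued field; and your concluding equality $L=\kappa'((\pi^{1/e}))$ should be a containment $L\subseteq\kappa'((\pi^{1/e}))$, because $u^{1/e}$ generally lives only in the enlarged field $\kappa'((\varpi_L))$ rather than in $L$ --- but containment is all the lemma asks for.
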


\begin{proof}
After replacing $L$ by its Galois closure over $K$, we may assume that $L/K$ is Galois.  Let $\bar{K}$ be an algebraic closure of $K$ that contains $L$.

Let $R = \kappa[[\pi]]$, let $S$ be the integral closure of $R$ in $L$, and let $e$ be the ramification index of $S$ over the prime $\pi R$ of $R$.  The residue field of 
$R' = R[\pi^{1/e}]$ is again $\kappa$.  Let $S'$ be the integral closure of the compositum of $R'$ and $S$ inside $\bar{K}$.  Thus $R'$ and $S'$ are each complete discrete valuation rings, and $S'$ is finite over $R'$.  Let $\kappa'$ be the residue field of $S'$.

By Abhyankar's Lemma (\cite{Grothendieck}, Lemme~X.3.6), $S'$ is unramified over the prime $\pi^{1/e}R'$ of $R'$. Thus the degree of $S'$ over $R'$ is equal to the degree $d$ of the residue field extension $\kappa'/\kappa$.  Hence the inclusion $\kappa'[[\pi^{1/e}]] \hookrightarrow S'$ is an isomorphism, each ring being a degree $d$ integrally closed extension of $R'$.  It follows that $S$ is contained in $\kappa'[[\pi^{1/e}]]$ and thus that $L$ is contained in $\kappa'((\pi))(\pi^{1/e})$ as asserted.
\end{proof}

\begin{lem} \label{Gm}
Let $c \in k((t))^\times$ and consider the derivation $\partial =c \frac{\partial}{\partial x}$ on the fields $k((x,t))\subseteq k((x))((t))$.  Let 
$y=exp(t/x):= \sum_{r=0}^\infty x^{-r}t^r/r! \in k((x))((t))$. Then $R=k((x,t))[y,y^{-1}]$ is a Picard-Vessiot ring over $k((x,t))$ with $\Gal^\del_y(R/k((x,t)))=\mathbb{G}_m$.
\end{lem}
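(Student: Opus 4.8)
The plan is to follow the same two-step pattern used in Lemma~\ref{Ga}: first establish that $R$ is a Picard-Vessiot ring with Galois group contained in $\mathbb{G}_m$, then rule out the proper subgroups of $\mathbb{G}_m$. First I would verify that $y=\exp(t/x)$ satisfies a first-order linear differential equation over $k((x,t))$. Computing $\partial(y)=c\,\partial y/\partial x = c\cdot(-t/x^2)\cdot y$, the logarithmic derivative is $\partial(y)/y = -ct/x^2 \in k((x,t))$. Thus $Y=(y)$ is a fundamental solution matrix for the $1\times 1$ matrix $A=(-ct/x^2)$, and since $C_{k((x))((t))}=k((t))=C_{k((x,t))}$, Proposition~\ref{existencePVR} gives that $R=k((x,t))[y,y^{-1}]\subseteq k((x))((t))$ is a Picard-Vessiot ring over $k((x,t))$. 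The same reasoning as in Example~\ref{example Ga} (one-dimensional representation) shows $\Gal^\del_y(R/k((x,t)))$ is a closed subgroup scheme of $\mathbb{G}_m$.

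The remaining task is to show this containment is an equality, i.e.\ that the Galois group is all of $\mathbb{G}_m$. The proper closed subgroups of $\mathbb{G}_m$ (over a characteristic-zero field) are exactly the finite groups $\mu_r$ of $r$-th roots of unity. So it suffices to show that $\Gal^\del_y(R/k((x,t)))$ is not contained in any $\mu_r$; equivalently, by the Galois correspondence, that no power $y^r$ (for $r\ge 1$) lies in $k((x,t))$. Indeed, if the group were $\mu_r$, then $y^r$ would be fixed by the whole group and hence lie in the base field. I would therefore argue by contradiction, mimicking Lemma~\ref{Ga}: suppose $y^r \in k((x,t))$, or more generally (to strengthen the argument as in the previous lemma) that $y^r$ lies in the larger field $K((x))$ where $K=k((t))$. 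Note that $y^r=\exp(rt/x)$ satisfies $\partial(y^r)/y^r = -crt/x^2$.

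The main obstacle is the contradiction argument ruling out $y^r\in K((x))$. Here I expect the key point to be an order-of-vanishing (valuation) analysis in the variable $x$, as in Lemma~\ref{Ga}, but adapted to the multiplicative setting. If $y^r=\sum_{i=m}^\infty a_i x^i$ with $a_i\in K$ and leading coefficient $a_m\neq 0$, then the logarithmic derivative $\partial(y^r)/y^r$ would be a power series in $x$ (times $c$) whose behavior near $x=0$ is governed by the fact that $y^r$ is a \emph{unit} power series up to the factor $x^m$; concretely $\partial(y^r)/y^r = c\,(\partial y^r/\partial x)/y^r$ would have lowest-order term of degree $-1$ (coming from the $x^m$ factor contributing $cm/x$). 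But $-crt/x^2$ has a pole of order $2$ in $x$, which cannot be matched by any element of $K((x))$ arising as a logarithmic derivative of a Laurent series in $x$ over $K$ — such logarithmic derivatives have $x$-pole order at most $1$. This order-of-pole mismatch is exactly the contradiction, and it is the step requiring the most care to phrase correctly. Once established, it follows that $y$ is not only transcendental over $k((x,t))$ but that no nontrivial power of $y$ lies in the base field, so the Galois group equals $\mathbb{G}_m$, completing the proof.
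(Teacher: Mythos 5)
Your proof is correct, but its second half takes a genuinely different route from the paper's. The first step (using Proposition~\ref{existencePVR} to get that $R$ is a Picard--Vessiot ring with $\Gal^\del_y(R/k((x,t)))\leq \mathbb{G}_m$) is the same. For the equality, the paper reduces to showing that $y$ is transcendental over $k((x,t))$, and proves this by showing that the equation $\partial(w)=-ctx^{-2}w$ has no nonzero solution in \emph{any} finite extension of $k((t))((x))$: since derivations extend uniquely to finite extensions, an algebraic $y$ would produce such a solution, and the paper then invokes Lemma~\ref{structurefiniteext} (proved via Abhyankar's Lemma) to embed that finite extension into some $\kappa'((z))$ with $z^e=x$, where a coefficient comparison yields the contradiction. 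You instead exploit the group structure: containment of the Galois group in $\mu_r$ forces $y^r$ to be a functorial invariant, hence an element of $k((x,t))$ by the Galois correspondence; viewing $y^r$ inside $K((x))$ (where $K=k((t))$, and $k((x,t))\subseteq K((x))$ as differential fields), you compare pole orders --- the logarithmic derivative of a nonzero element of $K((x))$ has $x$-adic pole order at most $1$, while $-crtx^{-2}$ has pole order exactly $2$. This is a real simplification: it avoids Lemma~\ref{structurefiniteext} and fractional powers of $x$ entirely, and runs exactly parallel to the paper's proof of Lemma~\ref{Ga}. What the paper's route buys in exchange is a stronger nonexistence statement (no solutions in any finite extension of $k((t))((x))$, not merely infinitude of the Galois group), and the structure lemma it relies on is needed elsewhere in the paper anyway (in the proof of Theorem~\ref{result}). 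One small point of phrasing: ``suppose $y^r\in K((x))$'' should strictly be read as ``suppose some nonzero $w\in K((x))$ satisfies $\partial(w)=-crtx^{-2}w$,'' since $K((x))$ and $k((x))((t))$ are not subfields of a common field; but this is the same harmless abuse the paper itself makes in the proof of Lemma~\ref{Ga}.
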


\begin{proof}
Note that $y$ satisfies the differential equation
$\partial(y) 
=  -ctx^{-2}y$ over $k((x,t))$, hence $R\subseteq k((x))((t))$ is a Picard-Vessiot ring for the one-by-one matrix $A=-ctx^{-2}$ by Proposition \ref{existencePVR} and  $\Gal^\del_y(R/k((x,t)))\leq \GL_1 = \mathbb{G}_m$. To see that this containment is an equality, it suffices by the Galois correspondence to show that $y$ is transcendental over $k((x,t))$ (since the proper closed subgroups of the multiplicative group are finite). We will show by contradiction that no finite extension of the differential field extension $k((t))((x))$ of $k((x,t))$ contains an element $y$ that satisfies
$\partial(y) 
=  -ctx^{-2}y$. (This is sufficient since derivations extend uniquely to finite extensions.)

Suppose there is a finite extension $L/k((t))((x))$ containing such a $y$. Applying Lemma~\ref{structurefiniteext} with $\kappa = k((t))$, we obtain a finite field extension $\kappa'$ of $\kappa$ and a positive integer $e$ such that $L$ is contained in $\kappa'((z))$, where $z^e=x$.  

We consider $k((t))((x))$ as a differential field with respect to $\partial =c \;\partial/\partial x$. This derivation has a unique extension to a derivation on $\kappa'((z))$ with constant field $\kappa'$, given by $\partial(z) = cz^{1-e}/e$.

Since $y \in \kappa'((z))$, we may write $y = \sum_{i=m}^\infty a_i z^i$, where each $a_i \in \kappa'$ and $a_m \ne 0$.  Now $\partial(z^i)= iz^{i-1}\partial(z) = \frac{ci}{e}z^{i-e}$, and so
\[\partial(y) = \sum_{i=m}^\infty \frac{ia_ic}{e}z^{i-e}.\] 
The coefficient of $z^j$ in this expression vanishes for $j < m-e$, and in particular for $j=m-2e$.  Meanwhile, the coefficient of $z^{m-2e}$ in $-ctx^{-2}y = -ctz^{-2e}y$ is $-cta_m$, which is non-zero.  Thus $\partial(y)$ cannot equal $-ctx^{-2}y$.  This is a contradiction.
\end{proof}

\begin{lem}\label{cyclic}
Let $r$ be a positive integer, and assume that $k$ contains a primitive $r$-th root of unity. Then $y := x\bigl(1 - x^{-1}t\bigr)^{1/r}$ is contained in $k((x))((t))$ and $R=k((x,t))[y]\subseteq k((x))((t))$ is a Picard-Vessiot ring over $k((x,t))$  with $\Gal^\del_y(R/k((x,t)))=C_r$ in its one-dimensional representation. 
\end{lem}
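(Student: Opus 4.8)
The plan is to follow the same template as the $\Ga$ and $\Gm$ lemmas: first verify that $y$ really lies in $k((x))((t))$ and generates a Picard-Vessiot ring, then exhibit the containment $\Gal^\del_y(R/k((x,t)))\leq C_r$, and finally rule out the proper subgroups by showing $y$ is not contained in a suitable overfield. First I would check that $y=x(1-x^{-1}t)^{1/r}$ makes sense as an element of $k((x))((t))$: expanding $(1-x^{-1}t)^{1/r}$ by the binomial series gives $\sum_{j\ge 0}\binom{1/r}{j}(-x^{-1}t)^j$, a power series in $t$ whose coefficients lie in $k((x))$ (in fact in $k[x^{-1}]$), so $y$ is a well-defined element; the binomial coefficients make sense since $k$ has characteristic zero. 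Differentiating, $\del(y)=\frac{c}{x}\cdot\frac{1}{r}y$ after a short computation using $y^r=x^r(1-x^{-1}t)$, so $y$ satisfies the one-by-one equation $\del(y)=ay$ with $a=\frac{c}{rx}\in k((x,t))$. By Proposition \ref{existencePVR}, $R=k((x,t))[y,y^{-1}]$ (note $y$ is invertible since its leading term in $x$ is $x$) is a Picard-Vessiot ring, and $\Gal^\del_y(R/k((x,t)))\leq \GL_1=\Gm$.

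Next I would pin down the containment in $C_r$. Since $y^r=x^r(1-x^{-1}t)\in k((x,t))$, the element $y$ is algebraic of degree dividing $r$ over $k((x,t))$, so $\Frac(R)/k((x,t))$ is a finite extension. For any $K$-algebra $S$ and any $\sigma\in\Aut^\del(R\otimes_K S/k((x,t))\otimes_K S)$, applying $\sigma$ to the relation $(y\otimes 1)^r=x^r(1-x^{-1}t)\otimes 1$ shows $\sigma(y\otimes 1)^r=(y\otimes 1)^r$, so the constant $\alpha_\sigma:=(y\otimes 1)^{-1}\sigma(y\otimes 1)\in C_{R\otimes_K S}\cong S$ satisfies $\alpha_\sigma^r=1$. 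Hence $\Gal^\del_y(R/k((x,t)))$ is a subgroup scheme of $\mu_r$; since $k$ (and hence $K$) contains a primitive $r$-th root of unity, $\mu_r\cong C_r$ as constant group schemes, giving the containment in $C_r$ in its one-dimensional representation.

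Finally I would prove that the containment is an equality by showing the Galois group is all of $C_r$, equivalently (by the Galois correspondence) that $y$ generates the full degree-$r$ extension, i.e.\ $X^r-x^r(1-x^{-1}t)$ is irreducible over $k((x,t))$ — or more concretely that $y$ does not lie in any proper subextension. As in the $\Gm$ lemma, it suffices to show that no proper intermediate field $k((x,t))\subseteq L'\subsetneq \Frac(R)$ contains $y$; equivalently, that the degree $[\Frac(R):k((x,t))]$ is exactly $r$. The cleanest route is to work in the overfield $K((x))=k((t))((x))$ of $k((x,t))$ and show that $y$ has degree exactly $r$ there. Writing $w=1-x^{-1}t$, this reduces to showing that $w$ is not a $p$-th power in $K((x))^\times$ for any prime $p\mid r$; since $w=1-x^{-1}t$ has the form $1+(\text{higher order in }x^{-1})$ over the field $k((t))$ but is genuinely not a perfect power, I would examine its valuation and leading behavior — concretely, $w$ is a unit in $k((t))[[x^{-1}]]$ congruent to $1$, and one checks via the $x^{-1}$-adic expansion (the coefficient $-t\ne 0$ of $x^{-1}$ obstructs extracting a $p$-th root inside $k((t))((x))$, after reducing to the question of whether $-t/p$ can match a $p$-th-root expansion).

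The main obstacle is this last irreducibility/non-$p$-th-power step: unlike the transcendence arguments for $\Ga$ and $\Gm$, here one must control an algebraic relation, and the slickest argument is to observe that a $p$-th root of $w=1-x^{-1}t$ in $k((t))((x))$ would, upon comparing $x^{-1}$-adic expansions, force the value $t$ to be divisible (in the relevant sense) incompatibly with its being a uniformizer of $k((t))$. I expect the honest way to close this is to note that $C_{L'}=K$ for any such intermediate field together with a ramification computation analogous to Lemma \ref{structurefiniteext}, or simply to verify directly that $w$ is not a $p$-th power by examining the $t$-adic valuation of the putative root's defining coefficients. Either way the verification is elementary but is the one genuinely non-formal point in the lemma.
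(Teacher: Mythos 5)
Your overall architecture is sound, and your key reduction is legitimate: passing to the overfield $k((t))((x))$ of $k((x,t))$ can only lower the degree of the algebraic element $y$, so it suffices to prove $[k((t))((x))(y):k((t))((x))]=r$, which by Kummer theory (the needed roots of unity lie in $k$) reduces to showing that $w=1-x^{-1}t$ is not a $p$-th power in $k((t))((x))$ for any prime $p\mid r$. That target statement is true, but your proposed verification of it --- exactly the step you yourself flag as ``the one genuinely non-formal point'' --- is where the argument breaks down. Reasoning ``via the $x^{-1}$-adic expansion'' is incoherent inside $k((t))((x))$: elements of that field are Laurent series \emph{ascending} in powers of $x$, so there is no $x^{-1}$-adic expansion to compare against. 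Worse, your observation that $w$ is a unit congruent to $1$ in $k((t))[[x^{-1}]]$ cuts against you: in \emph{that} ring $w$ does have a $p$-th root (binomial series, characteristic zero), and the coefficient $-t/p$ obstructs nothing; the real point is that this root has infinitely many negative powers of $x$ and hence does not lie in $k((t))((x))$. The correct one-line argument you are circling around is the valuation you mention but never use: for the $x$-adic valuation $v_x$ on $k((t))((x))$ one has $v_x(w)=v_x(1-tx^{-1})=-1$, which is not divisible by $p$, whereas any $p$-th power has valuation in $p\Z$; hence $w$ is not a $p$-th power (indeed this shows the class of $w$ in the group $k((t))((x))^\times$ modulo $r$-th powers has exact order $r$, so the degree is $r$ outright). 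With that substitution your proof closes. A separate, harmless slip: your formula $\del(y)=\frac{c}{rx}\,y$ is wrong; the correct coefficient is $a=\del(y)/y=\del(y^r)/(ry^r)=\frac{c(rx-(r-1)t)}{rx(x-t)}$, but all that matters is $a\in k((x,t))$, which holds.

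For comparison, the paper never leaves $k((x,t))$: it proves that $y^m\notin k[[x,t]]$ for $0<m<r$, i.e.\ that $x^r-x^{r-1}t$ is not a $d$-th power of any $f\in k[[x,t]]$ for $d>1$, by analyzing the lowest-total-degree terms of a putative root $f$, and then invokes Kummer theory over $k((x,t))$ itself. Your route, once repaired by the valuation argument, is a genuinely different and arguably slicker way to get irreducibility (a discrete valuation computation in the overfield replaces the combinatorial analysis of terms of $f$); both are elementary and of comparable length, but as written your crucial step is not proved and the heuristics offered for it conflate the two completions $k((t))((x))$ and $k((t))[[x^{-1}]]$.
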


\begin{proof}
First note that the element $y/x = \bigl(1 - x^{-1}t\bigr)^{1/r}$ lies in the subring 
$k[x^{-1}][[t]]$ of $k((x))((t))$ because it is a power series in $x^{-1}t$, hence $y \in k((x))((t))$.

Now $y^r$ lies in $k[[x,t]]$.  We claim that $r$ is minimal for this property.  To show this, let $m$ be minimal such that $y^m \in k[[x,t]]$.  Then $m$ divides $r$; let $d = r/m$.  Thus $y^r = x^r-x^{r-1}t$
is a $d$-th power of some element $f$ in $k[[x,t]]$.  
Since $f^d = x^r-x^{r-1}t$, it follows
that $f$ has no term of total degree less than $m$; and among its terms of total degree $m$ there is a non-vanishing $x^m$ term as well as some $x^i t^j$ term with $i+j=m$ and $j>0$.  
Let $j$ be maximal for this property.  Then $x^r-x^{r-1}t = f^d$ has a non-vanishing $x^{id}t^{jd}$ term.  Hence $i=r-1$ and $j=d=1$.  Thus $r=m$, as claimed.  

Since $k$ contains a primitive $r$-th root of unity, it then follows from Kummer theory that $k((x,t))(y)$ is a cyclic extension of $k((x,t))$ of degree $r$. The derivation $\del$ extends uniquely to $R=k((x,t))(y)=k((x,t))[y]$, and $y$ satisfies the differential equation $\del(y)=ay$ for $a:=\del(y)/y=\del(y^r)/(ry^r) \in k((x,t))$. Hence $R$ is a Picard-Vessiot ring over $k((x,t))$ by Proposition~\ref{existencePVR}, and $\Gal^\del_y(R/k((x,t)))=C_r$. 
\end{proof}

\begin{prop} \label{bldg block extens} Let $k$ be a field of characteristic zero. Let $P$ be a $k$-point of the affine $x$-line $\mathbb A^1_{k} \subset \mathbb P^1_{k}$, and let $F_P$ and $F_{\wp(P)}$ be as defined in Section \ref{sectionpatching}. Endow these fields with the derivation $c\frac{\del}{\del x}$ for some $c \in k((t))^\times$. Suppose that $\G \subseteq \GL_n$ is a linear algebraic group over $K=k((t))$ that is $K$-isomorphic to $\mathbb G_m$, $\mathbb G_a$, or $C_r$ where $r$ is such that $k$ contains a primitive $r$-th root of unity.  
Then there exist a matrix $A \in F_P^{n \times n}$, a Picard-Vessiot ring $R/F_P$ for $A$
that is contained in $F_{\wp(P)}$, and a fundamental solution matrix 
$Y \in \GL_n(R)$ for $A$, such that $\Gal^\del_Y(R/F_P)=\G$.
\end{prop}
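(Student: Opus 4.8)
The plan is to realize $\G$ in two stages: first produce the correct \emph{abstract} group via the explicit building blocks of Lemmas~\ref{Ga}, \ref{Gm}, and~\ref{cyclic}, and then switch to the prescribed matrix representation $\G \subseteq \GL_n$ by means of the Tannakian Proposition~\ref{tannaka}. The point is that the building blocks realize $\Ga$, $\Gm$, and $C_r$ only in their standard small representations, whereas the statement demands an arbitrary fixed embedding into $\GL_n$; Proposition~\ref{tannaka} is exactly the tool that changes the representation without changing the differential Galois group as an abstract group.

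First I would reduce to the case where $P$ is the origin. Since $P$ is the $k$-point defined by $x=b$ for some $b \in k$, the substitution $x \mapsto x-b$ is a differential isomorphism preserving the derivation $c\,\del/\del x$ and carrying $F_P$ to $k((x,t))$ and $F_{\wp(P)}$ to $k((x))((t))$; thus I may assume $F_P = k((x,t))$ and $F_{\wp(P)} = k((x))((t))$, whose common field of constants is $K = k((t))$. Depending on whether $\G$ is $K$-isomorphic to $\Ga$, $\Gm$, or $C_r$, I then apply Lemma~\ref{Ga}, \ref{Gm}, or~\ref{cyclic} respectively to obtain a Picard-Vessiot ring $R_0 \subseteq F_{\wp(P)}$ over $F_P$ whose differential Galois group $G_0 := \underline{\Aut}^\del(R_0/F_P)$ is $K$-isomorphic to the same abstract group. (The hypothesis that $k$ contain a primitive $r$-th root of unity is exactly the requirement of Lemma~\ref{cyclic}.)

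Next, composing the given isomorphism between $\G$ and $\Ga$, $\Gm$, or $C_r$ with the one coming from the building block yields a $K$-isomorphism $\phi \colon G_0 \to \G$, and postcomposing with the inclusion $\iota \colon \G \hookrightarrow \GL_{n,K}$ gives a faithful representation $\rho := \iota\circ\phi \colon G_0 \to \GL_{n,K}$ with $\rho(G_0) = \G$ as subgroups of $\GL_n$. Applying Proposition~\ref{tannaka} to $R_0$, $G_0$, and $\rho$ produces a Picard-Vessiot ring $R \subseteq R_0$ over $F_P$ and a fundamental solution matrix $Y \in \GL_n(R)$ with $\Gal^\del_Y(R/F_P) = \rho(G_0) = \G$; faithfulness of $\rho$ moreover gives $R = R_0 \subseteq F_{\wp(P)}$. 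Writing $A := \del(Y)Y^{-1}$, the very notion of $Y$ being a fundamental solution matrix for the Picard-Vessiot ring $R/F_P$ forces $A \in F_P^{n\times n}$, and this $A$, $R$, and $Y$ satisfy all the requirements.

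I expect the only point requiring care to be the bookkeeping in the Tannakian step: one must feed Proposition~\ref{tannaka} a representation whose image is the prescribed subgroup $\G \subseteq \GL_n$ \emph{exactly}, rather than merely a conjugate of it or an abstractly isomorphic copy. This is ensured because $\phi$ is an honest isomorphism onto $\G$ and $\iota$ is the fixed inclusion, so $\rho(G_0)$ coincides with $\G$ as a subgroup of $\GL_n$, and Proposition~\ref{tannaka} delivers the equality $\Gal^\del_Y(R/F_P) = \rho(G_0)$ of subgroups (not just up to $\GL_n(K)$-conjugacy). With that identification in place, the remaining verifications are routine.
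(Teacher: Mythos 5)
Your proof is correct and follows essentially the same route as the paper's own argument: translate $P$ to the origin, invoke Lemmas~\ref{Ga}, \ref{Gm}, and~\ref{cyclic} to obtain the building-block Picard-Vessiot ring inside $F_{\wp(P)}$, and then apply Proposition~\ref{tannaka} to pass to the prescribed faithful representation $\G \subseteq \GL_n$. The only difference is that you spell out the bookkeeping (the composite $\rho = \iota\circ\phi$, the exact equality $\rho(G_0)=\G$ as subgroups of $\GL_n$, and $R=R_0$ by faithfulness) which the paper leaves implicit.
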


\begin{proof}
After a change of variables, we may assume that $P$ is the point of $\mathbb P^1_{k}$ where $x=0$. We then have inclusions $F_P=k((x,t)) \subset F_{\wp(P)}=k((x))((t))$.
By Lemma~\ref{Ga},~\ref{Gm}, and~\ref{cyclic}, there exists a Picard-Vessiot ring over $F_P$ contained in $F_{\wp(P)}$ with differential Galois group ${\mathbb G}_m$, ${\mathbb G}_a$, or $C_r$ as in the statement, respectively. The result then follows from Proposition~\ref{tannaka}.
\end{proof}

\end{section}

\begin{section}{The differential inverse Galois problem} \label{section inverse}

In this section, we solve the inverse problem over function fields over $k((t))$, where as before $k$ is any field of characteristic zero. We begin by making the following 

\begin{Def}\label{occurs}
Let $F$ be a differential field with field of constants $K$. We say that a
linear algebraic group $G$ defined over $K$ {\bf is a differential
Galois group} over $F$ if there exists a Picard-Vessiot ring $R/F$ with
Galois group $K$-isomorphic to $G$. 
\end{Def}
Note that if $G$ is a differential Galois group over $F$, this also implies that every faithful representation $\G\leq \GL_n$ of $G$ arises as a differential Galois group over $F$ by Proposition~\ref{tannaka}; namely, there exists a $Y\in\GL_n(R)$ such that $\Gal^\del_Y(R/F)=\G$.

We next note that in order to solve the inverse problem, we may modify the derivation.

\begin{lem}\label{change of derivation}
Let $F$ be a field, and let $\del$ and $\del'$ be derivations on $F$ such that there is an $a \in F^\times$ with $\del'=a\del$. Let $\G\leq \operatorname{GL}_n$ be a linear algebraic group over $C_F$, and suppose that $\G$ is a differential Galois group over $(F,\del)$. Then $\G$ is also a differential Galois group over $(F,\del')$.
\end{lem}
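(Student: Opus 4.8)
The plan is to show that a Picard-Vessiot ring for $(F,\del)$ is, after changing the matrix appropriately, also a Picard-Vessiot ring for $(F,\del')$, and that the differential Galois group is literally the same subgroup of $\GL_n$. The key observation is that the two derivations $\del$ and $\del'=a\del$ define the \emph{same} constants: since $a\in F^\times$, an element $f\in F$ satisfies $\del(f)=0$ if and only if $\del'(f)=a\del(f)=0$, so $C_{(F,\del')}=C_{(F,\del)}=:K$. More generally, for any differential ring extension $R/F$ the two derivations have the same kernel on $R$, so the constants agree throughout; this is what lets the group schemes, which are defined via rings of constants, coincide.

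First I would take a Picard-Vessiot ring $R/(F,\del)$ whose Galois group is $K$-isomorphic to $\G$; by the remark following Definition~\ref{occurs} we may choose a fundamental solution matrix $Y\in\GL_n(R)$ with $\del(Y)=AY$ for some $A\in F^{n\times n}$ and $\Gal^\del_Y(R/F)=\G$. Now equip $R$ with the derivation $\del'=a\del$ instead. The same matrix $Y$ satisfies
\[
\del'(Y)=a\del(Y)=aAY=A'Y,\qquad A':=aA\in F^{n\times n},
\]
so $Y$ is a fundamental solution matrix for $A'$ with respect to $\del'$. Since $C_{(R,\del')}=C_{(R,\del)}=K$ and $R=F[Y,Y^{-1}]$, Proposition~\ref{existencePVR} (applied with $L=\Frac(R)$, which has the same constants as before) shows that $R$ is a Picard-Vessiot ring for $A'$ over $(F,\del')$.

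It remains to identify the Galois group. For any $K$-algebra $S$, a differential automorphism $\sigma$ of $R\otimes_K S$ over $F\otimes_K S$ is the same whether one uses $\del$ or $\del'=a\del$, because $\sigma$ commutes with multiplication by $a\in F^\times$; hence the automorphism groups $\Aut^{\del}(R\otimes_K S/F\otimes_K S)$ and $\Aut^{\del'}(R\otimes_K S/F\otimes_K S)$ coincide as abstract groups, and in fact as group functors. Since the linearization $\Psi_Y$ of Proposition~\ref{thm.galoistheory} is defined purely in terms of the fundamental solution matrix $Y$ (via $\sigma\mapsto (Y\otimes 1)^{-1}\sigma(Y\otimes 1)$) and does not reference the derivation directly, the images agree: $\Gal^{\del'}_Y(R/F)=\Gal^{\del}_Y(R/F)=\G$ as subgroups of $\GL_{n,K}$. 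Thus $\G$ is a differential Galois group over $(F,\del')$, as claimed.

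I do not expect a serious obstacle here; the content is the single observation that $\del$ and $\del'$ share the same constants and the same differential automorphisms, so the entire Picard-Vessiot apparatus transports verbatim while the defining matrix merely scales by $a$. The only point requiring a little care is confirming that the ambient field $\Frac(R)$ still has constants equal to $K$ under $\del'$, which is immediate from the equality of kernels noted above, so that Proposition~\ref{existencePVR} applies without modification.
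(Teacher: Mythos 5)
Your proof is correct and follows essentially the same route as the paper's: keep the same ring $R$ and fundamental solution matrix $Y$, note that $Y$ solves $\del'(Y)=(aA)Y$, invoke Proposition~\ref{existencePVR}, and observe that $\underline{\Aut}^{\del'}(R/F)=\underline{\Aut}^{\del}(R/F)$ so that $\Gal^{\del'}_Y(R/F)=\Gal^{\del}_Y(R/F)=\G$. The extra details you supply (equality of constants under $\del$ and $\del'$, and the derivation-independence of the linearization $\Psi_Y$) are exactly the points the paper leaves implicit.
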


\begin{proof}
By assumption, there exists a matrix $A \in F^{n\times n}$ with Picard-Vessiot ring $R$ over $(F,\del)$ and fundamental solution matrix $Y \in \GL_n(R)$ such that $\Gal^\del_Y(R/F)=\G$ in the given representation $\G\leq \GL_n$. Then $\del'=a\del$ extends to a derivation on $R$ with $\del'(Y)=aAY$. By Proposition~\ref{existencePVR}, $R$ is a Picard-Vessiot ring over $(F,\del')$ for the matrix $aA$ with fundamental solution matrix $Y$. Note that $\underline{\Aut}^{\del'}(R/F)=\underline{\Aut}^{\del}(R/F)$ and thus $\Gal^{\del'}_Y(R/F)=\Gal^\del_Y(R/F)=\G$.
\end{proof}

\begin{subsection}{The inverse problem over $k((t))(x)$}

As before, $k$ denotes a field of characteristic zero. In this section, we show that every linear algebraic group over $k((t))$ is a differential Galois group over the rational function field $k((t))(x)$, equipped with any nontrivial derivation with field of constants $k((t))$.

\begin{lem}\label{distinctconjugates}
Let $k_0\leq k$ such that $k/k_0$ is a finite Galois extension of degree $d$. Assume further that $k$ contains a primitive $e$-th root of unity $\zeta$ for some $e \in  \N$. Then there exist infinitely many elements $a \in k$ such that there are $d\cdot e$ distinct elements among the $\Galf(k/k_0)$-conjugates of $a,\zeta a, \dots, \zeta^{e-1}a$. 
\end{lem}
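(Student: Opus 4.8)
The plan is to translate the requirement that the $\Galf(k/k_0)$-conjugates of $a,\zeta a,\dots,\zeta^{e-1}a$ be all distinct into a condition that excludes $a$ from finitely many proper $k_0$-subspaces of $k$, and then to invoke the fact that a finite-dimensional vector space over an infinite field is not a finite union of proper subspaces.

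Write $G=\Galf(k/k_0)$, a group of order $d$, and note that each $\sigma\in G$ sends $\zeta$ to a primitive $e$-th root of unity, so $\sigma(\zeta^j a)=\sigma(\zeta)^j\sigma(a)$. The collection in question is the set of the $de$ values $\sigma(\zeta^j a)$ for $(\sigma,j)\in G\times\{0,\dots,e-1\}$, and I want these to be pairwise distinct, which is the maximal possible number. For a fixed $\sigma$ and $a\neq 0$, the $e$ values $\sigma(\zeta)^j\sigma(a)$ are already distinct because $\sigma(\zeta)$ is a primitive $e$-th root of unity. A coincidence between values attached to distinct $\sigma,\sigma'$ happens exactly when $\sigma(a)/\sigma'(a)$ is an $e$-th root of unity; applying $\sigma'^{-1}$, this is equivalent to $\tau(a)=\zeta^l a$ for $\tau=\sigma'^{-1}\sigma\neq\mathrm{id}$ and some $0\le l\le e-1$. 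Thus the conjugates are all distinct precisely when $a\neq 0$ and
\[ \tau(a)\neq \zeta^l a \quad\text{for all } \tau\in G\smallsetminus\{\mathrm{id}\} \text{ and all } 0\le l\le e-1. \]

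Next I view $k$ as a $k_0$-vector space of dimension $d$. For fixed $\tau$ and $l$, the map $a\mapsto\tau(a)-\zeta^l a$ is $k_0$-linear, since both $\tau$ and multiplication by $\zeta^l$ are $k_0$-linear, so its kernel $V_{\tau,l}$ is a $k_0$-subspace of $k$. I claim each $V_{\tau,l}$ with $\tau\neq\mathrm{id}$ is proper: if $l\neq 0$ then $1\notin V_{\tau,l}$ because $\tau(1)=1\neq\zeta^l$, while if $l=0$ then $V_{\tau,0}$ is the fixed field of $\tau$, a proper subfield because $\tau\neq\mathrm{id}$. Hence the locus of $a$ violating the displayed condition is contained in the union of the finitely many proper $k_0$-subspaces $V_{\tau,l}$.

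Finally, since $k$ has characteristic zero, $k_0$ is infinite, and a finite-dimensional vector space over an infinite field is never a finite union of proper subspaces; so there is some $a$ avoiding all the $V_{\tau,l}$, and in particular $a\neq 0$. As each $V_{\tau,l}$ is a $k_0$-subspace, for every $c\in k_0^\times$ the element $ca$ again avoids all of them, producing infinitely many valid $a$. The main point to get right is the bookkeeping in the reduction step, namely correctly accounting for the twist $\sigma(\zeta)=\zeta^{m_\sigma}$ introduced by the Galois action on $\zeta$; once the condition is put in the form above, the linear-algebra conclusion is routine.
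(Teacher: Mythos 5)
Your proof is correct, but it takes a genuinely different route from the paper's. Both arguments rest on the same coincidence analysis: reducing distinctness of the $d\cdot e$ conjugates to the conditions $a\neq 0$ and $\tau(a)\neq\zeta^l a$ for all $\tau\neq\mathrm{id}$ and $0\le l<e$ (the paper does this implicitly, in the form $c(\zeta^l-1)=\sigma(b)-\zeta^l b$). Where you diverge is in how an admissible $a$ is produced. The paper picks a primitive element $b$ of $k/k_0$ and restricts attention to the one-parameter family $a=c+b$ with $c\in k_0$, showing that each bad equation pins $c$ down to a single value, so that $c$ must lie in an explicit finite subset of $k$; since $k_0$ is infinite, almost every $c$ works. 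You instead observe that each bad condition cuts out a proper $k_0$-subspace $V_{\tau,l}\subseteq k$ (the fixed field of $\tau$ when $l=0$; a subspace missing $1$ when $l\neq 0$) and invoke the standard fact that a vector space over an infinite field is not a finite union of proper subspaces. Your version is more conceptual and avoids the primitive element theorem; the paper's is more explicit, exhibiting the bad locus inside the family $\{c+b : c\in k_0\}$ as a concrete finite set. One tiny quibble: your step ``in particular $a\neq 0$'' uses the existence of some $\tau\neq\mathrm{id}$ (so that $0\in V_{\tau,l}$), which fails when $d=1$; either treat $d=1$ separately (any $a\neq 0$ works trivially -- the paper sidesteps this by assuming $d>1$ at the outset) or simply add the proper subspace $\{0\}$ to your finite union.
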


\begin{proof}
We may assume that $d > 1$.  Since $k/k_0$ is separable, there exists some primitive element $b\in k$. Thus there are exactly $d$ conjugates of $b$. For any $c \in k_0$, consider the $\Galf(k/k_0)$-conjugates of $\zeta^i(c+b)$ for $0 \leq i < e$. If the number of conjugates is less than $d\cdot e$, then there exists a non-trivial $\sigma \in \Galf(k/k_0)$ such that $\zeta^i(c+b)=\sigma(\zeta^j(c+b))$ for some $0\leq i\leq j<e$. Note that $\zeta^i\neq \sigma(\zeta^j)$, since $\sigma(c)=c$ and $\sigma(b)\neq b$. Hence there exists $0<l<e$ with $\frac{\zeta^i}{ \sigma(\zeta^j)}=\zeta^l$. We obtain $c(\zeta^l-1)=\sigma(b)-\zeta^l\cdot b$, hence $c$ is contained in the finite set $S=\{\frac{\sigma(b)-\zeta^l\cdot b}{\zeta^l-1}\mid 0<l<e, 1\neq\sigma\in \Galf(k/k_0) \}\subseteq k$. Therefore, almost all choices of $c$ yield an element $a=c+b$ with the desired property. 
\end{proof}

Suppose that $k/k_0$ is a finite Galois extension of degree $d$ and that $k$ contains a primitive $e$-th root of unity, for a positive integer $e$. Consider the Laurent series field extension $k((t))/k_0((t_0))$ with $t_0:=t^e$. Define $\Gamma=\Galf(k((t))/k_0((t_0)))$. Recall 
from Example~\ref{example action}(a) that $\Gamma$ has order $d\cdot e$, and it is a semi-direct product of the cyclic group $\Galf\bigl(k((t))/k((t_0))\bigr)$ of order $e$ with the group $\Galf(k/k_0)$. The action of $\Gamma$ on $K=k((t))$ over $K_0=k_0((t_0))$ extends to an action of $F=K(x)$ over $K_0(x)$ by taking $x$ to $x$. Consider the variable change $z=x/t$. 

\begin{lem} \label{properaction}
In the above setup, consider the induced action of $\Gamma$ on the $z$-line $\mathbb{P}^1_k$ (as explained in Example \ref{example action}(b)). Then for any positive integer $r$, there exist $r$ $k$-points $P_1,\dots, P_r \in \mathbb A^1_{k} \subset \mathbb P^1_{k}$ whose orbits $P_1^\Gamma, \dots P_r^\Gamma$ are disjoint and are each of order $|\Gamma|$.
\end{lem}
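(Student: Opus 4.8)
The plan is to translate the condition ``the $\Gamma$-orbit of $z=a$ has full length $|\Gamma|=de$'' into the purely field-theoretic condition already handled by Lemma~\ref{distinctconjugates}, and then to use the infinitude of good points provided there to extract $r$ points with pairwise disjoint orbits.

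First I would make the orbit explicit. By Example~\ref{example action}(b), the $\Gamma$-orbit of a point $z=a$ with $a\in k$ is the set $\{\,z=\zeta^{n_\sigma}\sigma(a)\mid \sigma\in\Gamma\,\}$, and in particular every point of the orbit again lies in $\mathbb A^1_k$. Writing $\bar\sigma\in\Galf(k/k_0)$ for the restriction of $\sigma$ to $k$, each orbit point has the form $\zeta^{n_\sigma}\bar\sigma(a)$, so the orbit is contained in $T:=\{\,\zeta^m\bar\sigma(a)\mid 0\le m<e,\ \bar\sigma\in\Galf(k/k_0)\,\}$, a set of at most $de$ elements.

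The key point is that the orbit is in fact all of $T$, and that $T$ is exactly the set of $\Galf(k/k_0)$-conjugates of $a,\zeta a,\dots,\zeta^{e-1}a$. For the first assertion I would use the semidirect product structure recalled before the lemma: the kernel of $\Gamma\twoheadrightarrow\Galf(k/k_0)$ is the cyclic group $\Galf(k((t))/k((t_0)))$ of order $e$, generated by $\tau\colon t\mapsto\zeta t$ (fixing $k$). Fixing $\sigma_0$ over a given $\bar\sigma$, the whole fiber is $\{\sigma_0\tau^j : 0\le j<e\}$, and a short computation gives $(\sigma_0\tau^j)(t)=\zeta^{cj+n_{\sigma_0}}t$, where $\zeta^c=\bar\sigma(\zeta)$ with $\gcd(c,e)=1$; hence $n_\sigma$ runs over all of $\mathbb Z/e$ as $\sigma$ runs over the fiber, while $\sigma(a)=\bar\sigma(a)$ stays fixed. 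Therefore the orbit meets every element of $T$. For the second assertion, reindexing $\bar\sigma(\zeta^m a)=\zeta^{cm}\bar\sigma(a)$ and using that $m\mapsto cm$ is a bijection of $\mathbb Z/e$ shows that the union over $m$ of the $\Galf(k/k_0)$-conjugate sets of $\zeta^m a$ is precisely $T$. Consequently the orbit of $z=a$ has exactly $|\Gamma|=de$ points if and only if the conjugates of $a,\zeta a,\dots,\zeta^{e-1}a$ are pairwise distinct.

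With this dictionary in hand, the proof concludes quickly. Lemma~\ref{distinctconjugates} supplies infinitely many $a\in k$ for which those $de$ conjugates are distinct, i.e.\ infinitely many $k$-points $z=a$ with orbit of full length $|\Gamma|$. Since distinct $\Gamma$-orbits are disjoint and each such orbit is finite (of size $de$), infinitely many good points must fall into infinitely many distinct orbits; choosing representatives from any $r$ of these yields points $P_1,\dots,P_r\in\mathbb A^1_k$ with disjoint orbits each of order $|\Gamma|$. I expect the only real work to be the structural identification in the previous paragraph --- verifying that $\sigma\mapsto n_\sigma$ is surjective on each fiber over $\Galf(k/k_0)$, which is what forces the orbit to be as large as possible --- after which everything reduces to Lemma~\ref{distinctconjugates} together with a one-line pigeonhole argument.
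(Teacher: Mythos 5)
Your proof is correct and follows essentially the same route as the paper: both identify the $\Gamma$-orbit of $z=a$ with the set of $\Galf(k/k_0)$-conjugates of $a,\zeta a,\dots,\zeta^{e-1}a$ (using the coset/fiber structure of $\Gamma$ over $\Galf(k/k_0)$ to show $n_\sigma$ realizes every residue mod $e$ on each fiber), and then invoke Lemma~\ref{distinctconjugates}. The only cosmetic difference is at the end, where the paper picks the points $P_2,\dots,P_r$ inductively by avoiding the finitely many conjugates of the earlier points, while you use the infinitude of good $a$ plus a pigeonhole argument --- these are interchangeable.
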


\begin{proof}
 For each $\sigma \in \Gamma$, there exists an integer $0\leq n_\sigma \leq e-1$ with $\sigma(t)=\zeta^{n_\sigma}t$ and thus $\sigma(z)=\zeta^{-n_\sigma}z$.  Recall from Example \ref{example action}(b) that if $\sigma \in \Gamma$ and $P \in \Pcal$ is the $k$-point $z=b$, then $P^{\sigma^{-1}}$ is the point $z=\zeta^{n_\sigma}\sigma(b)$.
 
  We apply Lemma \ref{distinctconjugates} and obtain an $a_1 \in k$ such that there are $d\cdot e$ distinct elements among the $\Galf(k/k_0)$-conjugates of $\zeta^ia_1$ ($0 \leq i \leq e-1$). Let $P_1$ be the point $z=a_1$. Then $P_1^\Gamma$ consists of the points  $z=\zeta^{n_\sigma}\sigma(a_1)$. We claim that these $|\Gamma|=de$ points are distinct.  It suffices to check that 
\[\{\zeta^{n_\sigma}\sigma(a_1) \ | \ \sigma \in \Gamma \}=\{\tau(\zeta^ia_1) \ | \  0 \leq i < e, \tau \in \Galf(k/k_0) \}.\] 
Given an element $\tau(\zeta^ia_1)$ in the right hand side,
let $\psi$ be the element of $\Galf\bigl(k((t))/k_0((t))\bigr) \le \Gamma$ that lifts $\tau \in \Galf(k/k_0)$.
Thus $n_\psi=0$.  
Since $\tau(\zeta^i)$ is an $e$-th root of unity, there is an element $\pi$ in the cyclic group $\Galf\bigl(k((t))/k((t_0))\bigr)$ with $\zeta^{n_\pi}=\tau(\zeta^i)$. 
Define $\sigma=\pi\circ \psi \in \Gamma$. As $n_\psi=0$, $\zeta^{n_\sigma}=\zeta^{n_\pi}
=\tau(\zeta^i)$. Therefore $\zeta^{n_\sigma}\sigma(a_1)=\tau(\zeta^ia_1)$, and the claim follows.
  
To find a second point $P_2$, we apply Lemma \ref{distinctconjugates} to obtain an element $a_2 \in k$ with the same property and such that $a_2$ is not contained in the finite set of $\Galf(k/k_0)$-conjugates of $\zeta^ia_1$. Therefore, $P_2 \notin P_1^\Gamma$. Thus $P_1^\Gamma$ and $P_2^\Gamma$ are disjoint orbits of order $|\Gamma|$. By induction, we get points $P_1,\dots,P_r$ as asserted.
\end{proof}

\begin{thm}\label{result}
Let $k$ be a field of characteristic zero, let $K=k((t))$, and let $F$ be a rational function field of transcendence degree one over $K$. Let $\del$ be a non-trivial derivation on $F$ with field of constants $K$. Then every linear algebraic group defined over $K$  is a differential Galois group over $F$. 
\end{thm}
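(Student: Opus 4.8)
The plan is to realize $\G$ over a rational function field whose field of constants is a suitable finite Galois extension $K_1/K$, arranged so that $\G_{K_1}$ is generated by ``simple'' subgroups and so that a $\Gamma$-equivariant patching construction descends back to $F$. First I would normalize the derivation: since $\del$ is nontrivial with $C_F=K$ and $F=K(x)$, we have $\del(x)\in F^\times$ and $\del=\del(x)\cdot\del/\del x$, so by Lemma~\ref{change of derivation} it suffices to treat the case $\del=\del/\del x$. Fix a faithful embedding $\G\le\GL_{n,K}$. By Proposition~\ref{groupsplits} there is a finite extension $\tilde K/K$ and closed subgroups $\G_1,\dots,\G_r\le\G_{\tilde K}$ generating $\G_{\tilde K}$, each $\tilde K$-isomorphic to $\Ga$, $\Gm$, or a constant cyclic group. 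Applying Lemma~\ref{structurefiniteext} with $\kappa=k$ and $\pi=t$, I may enlarge $\tilde K$ to $K_1:=k_1((t_1))$ with $t_1=t^{1/e}$, where $k_1/k$ is finite Galois, $k_1$ contains a primitive $e$-th root of unity $\zeta$, and $k_1$ contains the roots of unity needed for the cyclic building blocks. Then $K_1/K$ is finite Galois with group $\Gamma$ as in Example~\ref{example action}(a), and base change gives subgroups $\G_1,\dots,\G_r\le\G_{K_1}\le\GL_{n,K_1}$ generating $\G_{K_1}$.

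Next I would set up patching on the $z$-line over $K_1$, where $z=x/t_1$; note $F_1:=K_1(x)=K_1(z)$ and $\del=\del/\del x=t_1^{-1}\,\del/\del z$, so the derivation has the form $c\,\del/\del z$ with $c=t_1^{-1}\in K_1^\times$ required by the building-block lemmas. Using Lemma~\ref{properaction} I choose base points $P_1,\dots,P_r\in\mathbb A^1_{k_1}$ whose $\Gamma$-orbits are disjoint and each of full length $|\Gamma|$, and take $\Pcal=\bigcup_i P_i^\Gamma$. At each base point $P_i$, Proposition~\ref{bldg block extens} supplies a matrix $A_{P_i}\in F_{P_i}^{n\times n}$ and a fundamental solution matrix $Y_{P_i}\in\GL_n(F_{\wp(P_i)})$ whose Picard-Vessiot ring has $\Gal^\del_{Y_{P_i}}(R_{P_i}/F_{P_i})=\G_i$. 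I then propagate the data equivariantly by setting $Y_{P_i^\sigma}:=\sigma^{-1}(Y_{P_i})$ for each $\sigma\in\Gamma$; the full length of the orbits guarantees this is well defined (trivial stabilizers), and the cocycle condition in Definition~\ref{actonpatchingdata}(5) yields $\sigma(Y_{P^\sigma})=Y_P$ for every $P\in\Pcal$ and $\sigma\in\Gamma$, which is precisely the hypothesis of Theorem~\ref{diffpatching}\ref{diffpatching2}.

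To conclude, Theorem~\ref{diffpatching}\ref{diffpatching1} produces $A\in F_1^{n\times n}$, $Y\in\GL_n(F_U)$, and a Picard-Vessiot ring $R=F_1[Y,Y^{-1}]$ with $\Gal^\del_Y(R/F_1)=\overline{\langle\,\G_P\mid P\in\Pcal\,\rangle}$. The base-point groups $\G_{P_i}=\G_i$ already generate $\G_{K_1}$, giving the lower bound; and the transported group $\G_{P_i^\sigma}$ is the image of $\G_i$ under the $\sigma$-twist on the constants $K_1$, hence contained in $\sigma(\G_{K_1})=\G_{K_1}$ because $\G_{K_1}$ is defined over $K$ and therefore $\Gamma$-stable. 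Thus the generated group is exactly $\G_{K_1}$. By Theorem~\ref{diffpatching}\ref{diffpatching2} I may take $Y$ to be $\Gamma$-invariant, so Lemma~\ref{lemmainvariantPVR} (with $L=F_U$, $F_0=F_1^\Gamma=K(x)=F$, and $\G\le\GL_{n,K}$) shows that $R_0:=F[Y,Y^{-1}]$ is a Picard-Vessiot ring over $(F,\del/\del x)$ with $\Gal^\del_Y(R_0/F)=\G$. Finally, Lemma~\ref{change of derivation} transfers this realization from $(F,\del/\del x)$ back to the original $(F,\del)$, as desired.

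The routine inputs here are all cited (splitting the group, the structure of finite extensions of $K$, the explicit building blocks, the patching theorem, and the descent lemma); the real work, and the step I expect to be the main obstacle, is the equivariant assembly in the middle paragraph. One must place and transport the local solutions so that the family $(Y_P)$ is genuinely $\Gamma$-compatible \emph{and} the subgroups $\G_P$ generate precisely $\G_{K_1}$ rather than something larger or smaller. This is exactly what forces the passage to the $z$-line with $z=x/t_1$ and the use of Lemma~\ref{properaction}: only full-length orbits make the transport $Y_{P_i^\sigma}=\sigma^{-1}(Y_{P_i})$ unambiguous, and only the $\Gamma$-stability of $\G_{K_1}$ keeps the transported groups inside $\G_{K_1}$ while the base points supply enough generators.
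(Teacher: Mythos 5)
Your proposal is correct and follows essentially the same route as the paper's own proof: normalize the derivation via Lemma~\ref{change of derivation}, split $\G$ over a finite extension $K_1=k_1((t_1))$ using Proposition~\ref{groupsplits} and Lemma~\ref{structurefiniteext}, pass to the $z$-line with $z=x/t_1$ and full-length $\Gamma$-orbits from Lemma~\ref{properaction}, install the building blocks of Proposition~\ref{bldg block extens} at base points and transport them equivariantly (your $Y_{P_i^\sigma}=\sigma^{-1}(Y_{P_i})$ is exactly the paper's assignment, and the cocycle condition does make it consistent), then apply both parts of Theorem~\ref{diffpatching} and descend via Lemma~\ref{lemmainvariantPVR} with $L=F_U$. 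Your identification of the generated group as $\G_{K_1}$ via $\Gamma$-stability of $\G_{K_1}$ likewise matches the paper's computation $\overline{\langle\G_j^\sigma\rangle}=\G_{K_1}$, so there is nothing substantive to add.
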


\begin{proof}
In order to make the notation in the proof consistent with the notation used in Section~\ref{sectionpatching}, we rename our base field as follows: We consider the rational function field $F_0=K_0(x)$ over $K_0=k_0((t_0))$, the field of formal Laurent series over a field $k_0$ of characteristic zero. By Lemma~\ref{change of derivation} we may assume without loss of generality that the derivation $\del$ on $F_0$ satisfies $\del(x)=1$, with $C_{F_0}=K_0$ (hence $\del=\del/\del x$).

Let $\G\leq \GL_n$ be a linear algebraic group defined over $K_0$. We will show that there exists a differential equation $A \in F_0^{n\times n}$ with a Picard-Vessiot ring $R$ and fundamental solution matrix $Y$ such that $\Gal^\del_Y(R/F_0)=\G$.

 By Proposition~\ref{groupsplits}, there exists a finite extension $K/K_0$ and finitely many $K$-subgroups $\G_1,\dots,\G_r \le \G_K$ that generate $\G_K$ and such that each $\G_i$ is either a finite cyclic group or is $K$-isomorphic to either $\mathbb{G}_m$ or $\mathbb{G}_a$.  After enlarging $K$, we may assume it contains a primitive $m$-th root of unity for each $m$ that is the order of one of the finite cyclic groups among $\G_1,\dots,\G_r$. We may further assume that $K=k((t))$ for a finite Galois extension $k/k_0$ and an $e$-th root $t$ of $t_0$ for some $e \in \N$, by Lemma~\ref{structurefiniteext}. Without loss of generality, we may assume that $k$ contains a primitive $e$-th root of unity $\zeta$. Then $K/K_0$ is a finite Galois extension (see Example \ref{example action}) and we set $\Gamma=\operatorname{Gal}(K/K_0)$.

 We first construct a Picard-Vessiot ring over $F=K(x)$ with Galois group~$\G_K$, using patching. 
 Let $z=x/t$. By Lemma~\ref{properaction}, we can fix $k$-points $P_1,\dots,P_r$ on the affine $z$-line $\mathbb{A}^1_k \subset \mathbb{P}^1_k$ such that $|P_i^\Gamma|=|\Gamma|$ for all $i$ and such that these orbits are disjoint. We set $\Pcal=P_1^\Gamma \cup \dots \cup P_r^\Gamma$. Let $(\Pcal, \Bcal, U)$ be the corresponding differential patching data as explained in the beginning of Section \ref{sectionpatching}. As explained in Example~\ref{example action}(b), $\Gamma$ acts on the differential patching data $(\Pcal, \Bcal, U)$ (see also Definition \ref{actonpatchingdata}).
   
 By Proposition~\ref{bldg block extens} (with $c=\del(z)=t^{-1}$), we can now fix Picard-Vessiot rings $R_{P_j}/F_{P_j}$ for each $j=1,\dots,r$ with fundamental solution matrices $Y_{P_j} \in \GL_n(F_{\wp({P_j})})$ such that $\Gal^\del_{Y_{P_j}}(R_{P_j}/F_{P_j})=\G_j \le \G_K \le \GL_{n,K}$. Now let $P \in \Pcal$ be arbitrary. Then there exists a unique $\sigma \in \Gamma$ and a unique integer $1\leq j \leq r$ such that $P^\sigma=P_j$. Recall that there is a differential isomorphism $\sigma \colon F_{\wp(P_j)} \to F_{\wp(P)}$ restricting to $\sigma \colon F_{P_j} \to F_P$ (see Definition~\ref{actonpatchingdata} and Example \ref{example action}). Set $Y_P=\sigma(Y_{P_j}) \in \GL_n(F_{\wp(P)})$ and $R_P=F_P[Y_P,Y_P^{-1}]=\sigma(R_{P_j})\subseteq F_{\wp(P)}$. We define $A_P=\del(Y_P)Y_P^{-1}$. Note that $A_P=\sigma(A_{P_j}) \in \GL_n(F_P)$. Hence $R_P$ is a Picard-Vessiot ring over $F_P$ for $A_P$ with fundamental solution matrix $Y_P$. Fix an extension of $\sigma$ from $K$ to $\bar{K}$. Then $\sigma$ induces a differential isomorphism $\sigma \colon R_{P_j}\otimes_K \bar{K} \to R_P\otimes_K \bar{K}$. Hence \[\underline{\Aut}^\del(R_P/F_P)(\bar{K})= \sigma\underline{\Aut}^\del(R_{P_j}/F_{P_j})(\bar{K})\sigma^{-1}\] and thus 
 \begin{eqnarray*}
   \Gal^\del_{Y_P}(R_P/F_P)(\bar{K})&=&\Gal^\del_{\sigma(Y_{P_j})}(\sigma(R_{P_j})/\sigma(F_{P_j}))(\bar{K})\\
   &=&\sigma(\Gal^\del_{Y_{P_j}}(R_{P_j}/F_{P_j})(\bar{K}))\\
   &=&\sigma(\G_j(\bar{K})).
 \end{eqnarray*}
 We conclude by Remark~\ref{remark equal groups} that $\Gal^\del_{Y_P}(R_P/F_P)=\G_j^\sigma$, the linear algebraic group obtained from $\G_j$ by applying $\sigma$ to the defining equations. \\
 
Theorem \ref{diffpatching} then yields a matrix $A \in F^{n\times n}$ and a Picard-Vessiot ring $R=F[Y,Y^{-1}]$ for $A$ over $F$ ($Y \in \GL_n(F_U)$ a fundamental solution matrix) such that \[\Gal^\del_Y(R/F)=\bar{<\G_j^\sigma \ | \  1\leq j \leq r, \sigma \in \Gamma > }=\bar{<\G_K^\sigma \ | \ \sigma \in \Gamma>}=\G_K.\] 
 Moreover, we constructed the fundamental solution matrices $(Y_P)_{P \in \Pcal}$ such that $\sigma(Y_{P^\sigma})=Y_P$ for all $P \in \Pcal$, $\sigma \in \Gamma$. Therefore, Theorem~\ref{diffpatching}\ref{diffpatching2} asserts that we can assume that $Y$ is $\Gamma$-stable, which in turn implies that all entries of $A$ are contained in $F^\Gamma=F_0$. Therefore, $R_0 := F_0[Y,Y^{-1}]$ is a Picard-Vessiot ring for $A$ over $F_0$ with differential Galois group $\Gal^\del_Y(R_0/F_0)
= \G \le \GL_{n,K_0}$ (via Lemma~\ref{lemmainvariantPVR} with $L=F_U$). 
\end{proof}

\begin{prop}
The Picard-Vessiot ring $R$ in Theorem \ref{result} can be constructed such that $R \subseteq K((x))$.
\end{prop}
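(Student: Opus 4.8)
The plan is to avoid re-solving the equation analytically and instead to write down an explicit differential $F$-embedding $\Phi\colon F_U\hookrightarrow K((x))$ and apply it to the Picard-Vessiot ring produced in the proof of Theorem~\ref{result}; under such a $\Phi$ the matrix $A$, the fundamental solution matrix, and the differential Galois group are all carried over unchanged, so the image lands in $K((x))$ with the required group. I keep the notation of that proof, so the realizing ring is $R_0=F_0[Y,Y^{-1}]$ over $F_0=K_0(x)$ and the assertion to prove is $R_0\subseteq K_0((x))$. Recall $z=x/t$, the $\Gamma$-stable finite set $\Pcal\subset\mathbb{A}^1_k$ of points $z=b_i$, and $F_U=\Frac(\hat R_U)$ with $\hat R_U=k[(z-b_1)^{-1},\dots,(z-b_m)^{-1}][[t]]$. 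Inspecting Lemma~\ref{distinctconjugates} and Lemma~\ref{properaction}, the base points $P_1,\dots,P_r$ may be chosen with nonzero $z$-coordinate (there are infinitely many admissible choices), and then every point of the orbit $\Pcal$ has nonzero $z$-coordinate as well; hence $z=0$ lies in $U$ and \emph{every element of $\hat R_U$ is regular at $z=0$}.

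The heart of the argument is the construction of $\Phi$, which is simply ``expand at $z=0$ and substitute $z=x/t$''. Given $g\in\hat R_U$, write its Taylor expansion at $z=0$ as $g=\sum_{m,i\ge 0}c_{m,i}\,z^m t^i$ with $c_{m,i}\in k$ (legitimate since $g$ is regular at $z=0$), and set
\[ \Phi(g)=\sum_{m,i\ge 0}c_{m,i}\,t^{\,i-m}x^m=\sum_{m\ge 0}\Bigl(t^{-m}\sum_{i\ge 0}c_{m,i}\,t^i\Bigr)x^m. \]
For each fixed $m$ the inner sum $\sum_i c_{m,i}t^i$ lies in $k[[t]]$, so the coefficient of $x^m$ lies in $k((t))=K$; as only nonnegative powers of $x$ occur, $\Phi(g)\in K[[x]]\subseteq K((x))$. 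I would then check the routine points: $\Phi$ is a ring homomorphism (it is substitution of $z=x/t$ into a double power series), it is injective, and it restricts to the tautological inclusion $F=K(z)=K(x)\hookrightarrow K((x))$ since $\Phi(z)=x/t$, $\Phi(t)=t$, and $\Phi|_k=\mathrm{id}$. As $\hat R_U$ is a domain, $\Phi$ extends to a field embedding $F_U\hookrightarrow K((x))$ (so even entries of $Y$ with poles at $z=0$ cause no trouble, their images simply acquiring finitely many negative powers of $x$). Finally $\Phi$ is differential: both sides carry $\del=\del/\del x$ with $\del(z)=1/t$, so $\Phi\circ\del$ and $\del\circ\Phi$ agree on the generator $z$ and on $K$, hence everywhere.

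With $\Phi$ in hand the conclusion is formal. Since $\del\,\Phi(Y)=\Phi(AY)=A\,\Phi(Y)$, the matrix $\Phi(Y)\in\GL_n(K((x)))$ is a fundamental solution matrix for $A$. I would verify that $\Phi$ is $\Gamma$-equivariant (it suffices to check this on the generators $z$ and $(z-b_j)^{-1}$, where it follows from $\sigma(z)=\zeta^{-n_\sigma}z$ and $\sigma(t)=\zeta^{n_\sigma}t$); then the $\Gamma$-invariance of $Y$ forces $\Phi(Y)\in\GL_n(K((x))^\Gamma)=\GL_n(K_0((x)))$. Hence $\Phi(R_0)=F_0[\Phi(Y),\Phi(Y)^{-1}]\subseteq K_0((x))$, and by Proposition~\ref{existencePVR} it is a Picard-Vessiot ring for $A$ over $F_0$ (note $C_{K_0((x))}=K_0=C_{F_0}$). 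As $\Phi\colon R_0\to\Phi(R_0)$ is a differential $F_0$-isomorphism carrying $Y$ to $\Phi(Y)$, it identifies the differential Galois group functors together with their linearizations, so $\Gal^\del_{\Phi(Y)}(\Phi(R_0)/F_0)=\Gal^\del_Y(R_0/F_0)=\G$. Renaming $\Phi(R_0)$ as $R_0$ gives the assertion.

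The only genuine obstacle is the well-definedness of $\Phi$: one must confirm that substituting $z=x/t$ into a power series in $z$ and $t$ really lands in the Laurent series field $k((t))((x))$—rather than producing unbounded negative powers of $x$ or divergent coefficients—and that the resulting map is differential. This is exactly the computation displayed above: the clean identities $z^m=t^{-m}x^m$ keep the $x$-expansion one-sided while each coefficient of $x^m$ remains a genuine element of $K$, and the relation $\del(z)=1/t$ holds identically on both sides. Once this single point is settled, everything downstream—injectivity, compatibility with $F_0$, $\Gamma$-equivariance, and preservation of the Galois group—is bookkeeping.
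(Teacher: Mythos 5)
Your proposal is correct and follows essentially the same route as the paper: choose $\Pcal$ to avoid $z=0$, so that $F_U\subseteq\Frac(k[[z,t]])$, and embed into $k((t))((x))$ by the substitution $z=x/t$, then use the differential and $\Gamma$-equivariant nature of this embedding to conclude that the $\Gamma$-invariant entries of $Y$ land in $K_0((x))$. Your explicit construction of $\Phi$ via double power series is just a spelled-out version of the paper's observation that $k[[z,t]]\subseteq k((t))[[x]]$ under $z\mapsto x/t$.
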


\begin{proof}
We switch to the notation as in the proof of Theorem \ref{result}, so we need to show that $R_0\subseteq K_0((x))=k_0((t_0))((x))$. We refine the choice of $\Pcal$ such that the point $z=0$ is not contained in $\Pcal$. Set $m=|\Gamma|\cdot r$, where $r$ is as in the proof of Theorem \ref{result}, and let $\alpha_1,\dots,\alpha_m \in k^{\times}$ such that $\Pcal$ is the set of all points $z=\alpha_i$. Now $R_0=F_0[Y,Y^{-1}]$ and $Y \in \GL_n(F_U)$ is $\Gamma$-invariant. As all $\alpha_i$ are non-zero, 
\[ F_U=\Frac([(z-\alpha_1)^{-1},\dots,(z-\alpha_m)^{-1}][[t]])\subseteq \Frac(k[[z,t]]).\]
Moreover $k[[z,t]]$ is contained in $k((t))[[x]]$, as can be seen by replacing $z$ by $x/t$ in any power series in $k[[z,t]]$.  Thus $\Frac(k[[z,t]])\subseteq \Frac(k((t))[[x]]) = k((t))((x))$. Hence $F_U$ can be regarded as a differential subfield of $k((t))((x))$;  and it is easy to check that the embedding $F_U\subseteq k((t))((x))$ is $\Gamma$-equivariant. Therefore, all entries of $Y$ lie in $F_U^\Gamma\subseteq k((t))((x))^\Gamma=k_0((t_0))((x))$ and the assertion follows.
\end{proof}

\begin{rem}
Theorem~\ref{result}, in conjunction with \cite{Hrush}, can be used to obtain a new proof of the fact that every linear algebraic group $G$ over $\bar{\Q}$ is a differential Galois group over $\bar\Q(x)$.  (In \cite{HartCrelle}, the second author gave an earlier proof of this, which also handled the case of $C(x)$ for $C$ an arbitrary algebraically closed field of characteristic zero). Namely, we may regard $G$ as defined over $\bar{\Q}((t))$; and then by Theorem~\ref{result}, there is a Picard-Vessiot extension $R$ of $\bar{\Q}((t))(x)$ with differential Galois group $G_{\bar{\Q}((t))}$, for some matrix $A \in \bar{\Q}((t))(x)^{n \times n}$.  
This data descends to some finitely generated $\bar{\Q}$-subalgebra $D$ of $\bar{\Q}((t))$; and by 
\cite[Section~V.1]{Hrush}, infinitely many specializations to closed points of $\Spec(D)$ yield the same differential Galois group $G$.  Since $\bar \Q$ is algebraically closed, these closed points are all $\bar \Q$-rational; and so $G$ is a differential Galois group over $\bar \Q(x)$.
\end{rem}

\begin{rem}
Theorem~\ref{result} in particular asserts that every finite \'etale group scheme $G$ over a characteristic zero Laurent series field $K=k((t))$ is a differential Galois group over $K(x)$.  Equivalently, for each such $G$, there is a finite morphism of smooth connected projective $K$-curves $C \to \mathbb{P}^1_K$, with a faithful action of $G$ on $C$, such that the generic point of $C$ is a $G$-torsor over $K(x)$.  In the case that $G$ is a finite {\em constant} group, this says that $G$ is a Galois group over $K(x)$; and that was proven (in fact in arbitrary characteristic) in 
\cite[Theorem~2.3]{HarGCAL}, using formal patching methods.  In the case of finite \'etale group schemes $G$ that need not be constant, this was shown (again in arbitrary characteristic) in \cite{MB01}.  That paper in fact showed more, viz.\ that this holds if $K$ is more generally a large field in the sense of Pop (\cite{Pop}); and also that one fiber of the morphism $C \to \mathbb{P}^1_K$ can be given in advance; this extended results of Pop (\cite[Main Theorem~A]{Pop}) and Colliot-Th\'el\`ene (\cite[Theorem~1]{CT00}) for finite constant groups.
\end{rem}

\end{subsection}

\begin{subsection}{Passage to finitely generated extensions}
In this subsection, we prove our main result (Theorem~\ref{mainresult}). This is a consequence of Theorem~\ref{result}, together with the following result (see Theorem~\ref{generalresult}): If $F$ is any differential field of characteristic zero with the property that every linear algebraic group defined over $C_F$ is a differential Galois group over $F$, then every finitely generated extension $L/F$ of differential fields with $C_L=C_F$ also has this property. To prove the latter, we first require some preparation.

\begin{lem}\label{constants indep} 
Let $L/F$ be an extension of differential fields. If $x_1,\dots,x_n \in C_L$ are algebraically independent over $C_F$, then they are algebraically independent over $F$.
\end{lem}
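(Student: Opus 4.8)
The plan is to argue by contradiction, using a minimal-length algebraic relation and exploiting the fact that differentiation annihilates the constants $x_i$. Suppose $x_1,\dots,x_n$ are algebraically dependent over $F$. Then there is a nonzero polynomial relation $\sum_\alpha c_\alpha x^\alpha = 0$ in $L$, where $\alpha$ ranges over finitely many multi-indices, $c_\alpha \in F$, and $x^\alpha := x_1^{\alpha_1}\cdots x_n^{\alpha_n}$. Among all such nonzero relations I would choose one with the minimal number of nonzero coefficients $c_\alpha$. Since the $x_i$ are algebraically independent over $C_F$ they are in particular nonzero, so no monomial $x^\alpha$ vanishes; hence a relation cannot consist of a single term, and the minimal one has at least two nonzero coefficients. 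After dividing through by one of them, I may assume some coefficient equals $1$, say $c_{\alpha_0} = 1$.

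Now I would apply the derivation $\partial$ to the identity $\sum_\alpha c_\alpha x^\alpha = 0$. Because each $x_i \in C_L$ satisfies $\partial(x_i)=0$, every monomial is a constant, $\partial(x^\alpha)=0$, and the product rule yields $\sum_\alpha \partial(c_\alpha)\,x^\alpha = 0$. The term indexed by $\alpha_0$ drops out, since $\partial(c_{\alpha_0})=\partial(1)=0$. Thus $\sum_{\alpha \neq \alpha_0}\partial(c_\alpha)\,x^\alpha = 0$ is an algebraic relation over $F$ with strictly fewer nonzero terms than the original. By minimality this relation must be trivial, that is, $\partial(c_\alpha)=0$ for every $\alpha$.

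Therefore all coefficients $c_\alpha$ lie in $C_F$, so the original nonzero polynomial has coefficients in $C_F$ and vanishes at $(x_1,\dots,x_n)$ --- contradicting the assumed algebraic independence of the $x_i$ over $C_F$. Hence $x_1,\dots,x_n$ are algebraically independent over $F$. The only point requiring real care is the normalization step: dividing by a nonzero coefficient to force some $c_{\alpha_0}=1$ is precisely what guarantees that differentiation strictly decreases the number of terms, and this is the engine driving the minimality argument. Beyond that, the proof is routine, so I do not anticipate a genuine obstacle.
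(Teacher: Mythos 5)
Your proof is correct and is essentially the same argument as the paper's: both take a relation over $F$ with the minimal number of monomial terms, normalize one coefficient to $1$, differentiate (which only affects the coefficients since the $x_i$ are constants), and use minimality to conclude all coefficients lie in $C_F$. The extra remark about single-term relations is harmless but unnecessary, since a single monomial relation already gives dependence over $C_F$.
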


\begin{proof}
We prove the contrapositive.
Assume that $x_1,\dots,x_n\in C_L$ are algebraically dependent over $F$. Let $r$ be the smallest positive integer such that there exists a polynomial expression over $F$, vanishing on $(x_1,\dots,x_n)$, with exactly $r$ monomial terms. Let $p$ be such a polynomial. After dividing by an element in $F^{\times}$, we can assume that some coefficient equals one. Differentiating the coefficients of $p$ yields a polynomial with less than $r$ monomial terms that also vanishes on $(x_1,\dots,x_n)$ and must be the zero polynomial by minimality of~$p$. Hence all coefficients of $p$ are contained in $C_F$, and thus $x_1,\dots,x_n$ are algebraically dependent over $C_F$.  
\end{proof}

\begin{lem}\label{compositum}
Let $F$ be a differential field, let $R/F$ be a Picard-Vessiot ring and set $G=\underline{\Aut}^\del(R/F)$ and $E=\Frac(R)$. Let $H_1,\dots,H_r$ be closed subgroups of $G$ (defined over $C_F$) and set $H=\bigcap\limits_{i=1}^r H_i$. Then 
$$E^{H_1}\cdots E^{H_r}=E^H\subseteq E.$$  
\end{lem}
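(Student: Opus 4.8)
The plan is to deduce the statement entirely from the Galois correspondence, which furnishes an inclusion-reversing bijection between closed subgroups $H \leq G$ defined over $K:=C_F$ and intermediate differential fields $F \subseteq L \subseteq E$, via the mutually inverse maps $H \mapsto E^H$ and $L \mapsto \underline{\Aut}^\del(R/L)$. The whole argument is then a lattice manipulation: under this anti-isomorphism, the join (compositum) of a family of fields corresponds to the meet (intersection) of the associated groups.

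First I would verify that both sides of the asserted equality are legitimate objects for the correspondence. The intersection $H=\bigcap_{i=1}^r H_i$ is a closed subgroup of $G$ defined over $K$ (being the scheme-theoretic intersection of the $H_i$, which is reduced since the characteristic is zero), so $E^H$ is defined. Likewise $C:=E^{H_1}\cdots E^{H_r}$ is a differential subfield of $E$: each $E^{H_i}$ is $\del$-stable, hence so is the subring they generate and, by the quotient rule, its fraction field $C$. By construction $C$ is the smallest intermediate differential field containing every $E^{H_i}$.

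For the inclusion $C \subseteq E^H$: since $H \subseteq H_i$ for each $i$ and the correspondence reverses inclusions, $E^H \supseteq E^{H_i}$ for all $i$; thus $E^H$ is an intermediate differential field containing all the $E^{H_i}$, and minimality of the compositum gives $C \subseteq E^H$. The reverse inclusion is where I expect to use bijectivity rather than mere order-reversal. Applying $L \mapsto \underline{\Aut}^\del(R/L)$ to the inclusions $C \supseteq E^{H_i}$, and using that this map is inverse to $H \mapsto E^H$ (so that $\underline{\Aut}^\del(R/E^{H_i})=H_i$), I obtain $\underline{\Aut}^\del(R/C)\subseteq H_i$ for every $i$, whence $\underline{\Aut}^\del(R/C)\subseteq H$. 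Applying the inclusion-reversing map $H \mapsto E^H$ to this last inclusion, and using that doing so returns $C$ from $\underline{\Aut}^\del(R/C)$ (the two maps being mutually inverse), yields $C \supseteq E^H$. Combining the two inclusions gives $C=E^H$.

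I do not anticipate a genuine obstacle: the content is purely formal once the Galois correspondence is available. The only points meriting care are the two sanity checks that $C$ is a differential field and that $H$ is an admissible closed $K$-subgroup, together with the observation that the reverse inclusion truly relies on the surjectivity of the correspondence, i.e.\ on the fact that every intermediate differential field---in particular $C$---is of the form $E^{H'}$ for a closed subgroup $H'$, and not merely on its order-reversing property.
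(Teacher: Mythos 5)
Your proof is correct and takes essentially the same route as the paper's: both arguments are pure lattice manipulations with the Galois correspondence, your $\underline{\Aut}^\del(R/C)$ playing exactly the role of the paper's auxiliary group $\tilde H$ (the paper invokes surjectivity to write $C=E^{\tilde H}$, then shows $\tilde H\subseteq H$ from $E^{H_i}\subseteq C$ and $\tilde H\supseteq H$ from $H$-invariance of the compositum). The only cosmetic difference is that you obtain $C\subseteq E^H$ via minimality of the compositum rather than by observing directly that every element of $C$ is $H$-invariant.
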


\begin{proof}
The compositum of $E^{H_1}, \dots, E^{H_r}$ is a differential subfield of $E$, so it equals $E^{\tilde H}$ for some closed subgroup $\tilde H$ of $G$, by the Galois correspondence. Then $E^{H_i}\subseteq E^{\tilde H}$, so  $\tilde H\subseteq H_i$ for all $i$, and thus $\tilde H \subseteq H$. On the other hand, every element in the compositum is invariant under $H$, hence $\tilde H \supseteq H$.
\end{proof}

\begin{lem}\label{intersection tensor}
Let $F_1,\dots,F_r$ be finite field extensions of a field $F$. Embed $F$ into $F_1\otimes_F \cdots \otimes_F F_r$ via $1\mapsto 1\otimes \cdots\otimes 1$.  Let $V \subseteq F_1\otimes_F \cdots \otimes_F F_r$ be an $F$-subspace of dimension $d<r$ with $F \subseteq V$.  If $1 \le s \le r-d+1$, then after relabeling the indices, \[(F_1\otimes_F  \cdots \otimes_F F_s)\cap V=F,\]  where the intersection is taken inside $F_1\otimes_F \cdots \otimes_F F_r$.
\end{lem}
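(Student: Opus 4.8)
The plan is to reduce to the largest admissible value $s=r-d+1$ and then build the relabeling greedily, removing one tensor factor at a time while forcing $\dim_F(V\cap(\text{remaining sub-tensor-algebra}))$ to drop by at least one at each step. For a subset $H\subseteq\{1,\dots,r\}$ write $A_H=\bigotimes_{i\in H}F_i$, embedded in $A:=F_1\otimes_F\cdots\otimes_F F_r$ by inserting $1$ in the factors indexed outside $H$; thus $F=A_{\emptyset}$ and $A=A_{\{1,\dots,r\}}$, and $H'\subseteq H$ gives $A_{H'}\subseteq A_H$. First I would observe that it suffices to treat $s=r-d+1$: if the relabeling works for this $s$, then for any smaller $s'$ one has $A_{\{1,\dots,s'\}}\subseteq A_{\{1,\dots,s\}}$, so $F\subseteq A_{\{1,\dots,s'\}}\cap V\subseteq A_{\{1,\dots,s\}}\cap V=F$ and the same relabeling works.

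The combinatorial heart is the following elementary claim: if $W\subseteq A_H$ is an $F$-subspace with $F\subseteq W$ and $\dim_F W\ge 2$, then there is an index $i\in H$ with $W\not\subseteq A_{H\setminus\{i\}}$. To prove it, fix for each $i$ an $F$-basis of $F_i$ whose first vector is $1$; this endows $A_H$ with a basis of pure tensors, and for $i\in H$ the subspace $A_{H\setminus\{i\}}\subseteq A_H$ is spanned by exactly those pure tensors whose $i$-th coordinate is $1$. Hence $\bigcap_{i\in H}A_{H\setminus\{i\}}$ is spanned by the single tensor $1\otimes\cdots\otimes 1$ and equals $F$. Therefore, if $W$ were contained in $A_{H\setminus\{i\}}$ for every $i\in H$, we would get $W\subseteq F$, contradicting $\dim_F W\ge 2$; this proves the claim.

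Now I would run the induction. Set $H_0=\{1,\dots,r\}$, so $V\cap A_{H_0}=V$ has dimension $d$. Suppose after $m$ steps (with $0\le m\le d-2$) I have a subset $H_m$ with $|H_m|=r-m$ and $\dim_F(V\cap A_{H_m})\le d-m$. The space $W:=V\cap A_{H_m}$ contains $F$, since $F\subseteq V$ and $1\otimes\cdots\otimes 1\in A_{H_m}$. If $\dim_F W\ge 2$, the claim furnishes an index $i\in H_m$ with $W\cap A_{H_m\setminus\{i\}}\subsetneq W$; putting $H_{m+1}=H_m\setminus\{i\}$, the space $V\cap A_{H_{m+1}}=W\cap A_{H_{m+1}}$ then has dimension at most $d-m-1$. (At each such step $|H_m|=r-m\ge r-d+2\ge 2$, so an index is always available.) After $d-1$ steps I obtain $H_{d-1}$ with $|H_{d-1}|=r-d+1=s$ and $\dim_F(V\cap A_{H_{d-1}})\le 1$; since this intersection always contains $F$, it equals $F$. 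Relabeling so that $H_{d-1}=\{1,\dots,s\}$ finishes the proof.

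I expect the main obstacle to be isolating and verifying the claim in the middle paragraph — namely that a subspace strictly larger than $F$ cannot sit inside every one of the subalgebras $A_{H\setminus\{i\}}$ simultaneously, which rests on the clean intersection identity $\bigcap_{i\in H}A_{H\setminus\{i\}}=F$ obtained from bases containing $1$. Once that is in hand, both the reduction to maximal $s$ and the greedy dimension-dropping induction are routine bookkeeping with dimensions.
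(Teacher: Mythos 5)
Your proposal is correct and takes essentially the same approach as the paper: the paper likewise fixes $F$-bases of the $F_i$ whose first vector is $1$, expands an element of $V\setminus F$ in pure tensors to locate a factor it genuinely involves, removes that factor so that $\dim_F$ of the intersection drops by at least one, and recurses (induction on $r$) — your identity $\bigcap_{i\in H}A_{H\setminus\{i\}}=F$ is an abstract repackaging of that element-wise step, and your greedy loop is the paper's recursion in iterative form. The only cosmetic omission is the case where $V\cap A_{H_m}$ already equals $F$ before $m=d-1$; there one removes an arbitrary index, which preserves the invariant and lets the loop reach $|H_{d-1}|=s$.
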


\begin{proof}
Since $F$ is contained in each $F_i$, the right hand side is contained in the left hand side.  For the other containment, it suffices to prove the assertion for $s=r-d+1$.

For each $i \leq r$, fix an $F$-basis $\{v_{i,j}\mid 1 \leq j \leq [F_i:F] \}$ of $F_i$ with $v_{i,1}=1$. We use induction on $r$. If $r=2$, then $d=1$ and $V=F$, and the claim follows. Let $r>2$. If $(F_1\otimes_F \cdots \otimes_F F_{r})\cap V=F$, there is nothing to prove. Otherwise, fix an $x \in (F_1\otimes_F \cdots \otimes_F F_{r})\cap V$ that is not contained in $F$. Then we can write $x$ as an $F$-linear combination of the elements $v_{1,j_1}\otimes\dots \otimes v_{r,j_{r}}$ in a unique way. Since $x \notin F$, this sum contains a term $v_{1,j_1}\otimes\dots \otimes v_{r,j_{r}}$ with $j_i\neq 1$ for some $i$. After relabeling the indices we may assume $i=r$, which implies $x \notin F_1\otimes_F \cdots \otimes_F F_{r-1}$. Hence  $\dim_F((F_1\otimes_F \cdots \otimes_F F_{r-1})\cap V)\leq d-1$ and the claim follows by induction.
\end{proof}

As before, let $F$ be a differential field, and let $G$ be a linear algebraic group defined over $K=C_F$. We now prove the above mentioned general result, using a strategy sometimes called the {\em Kovacic trick}: Assume there is a Picard-Vessiot extension $R$ over $F$ with differential Galois group $G^r$. The idea is to show that $R\otimes_{F} L$ must contain a Picard-Vessiot ring over $L$ with differential Galois group $G$ if $r$ is sufficiently large.

\begin{thm}\label{generalresult}
Let $F$ be a differential field of characteristic zero and write $K=C_F$. Let $L/F$ be a differential field extension that is finitely generated over $F$ with $C_L=K$. Let $G$ be a linear algebraic group defined over $K$ with the property that for every $r \in \N$, $G^r$ is a differential Galois group over $F$. Then $G$ is a differential Galois group over $L$. 
\end{thm}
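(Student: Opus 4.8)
The plan is to realize $G^r$ as a differential Galois group over $F$ for a suitably large $r$, and then show that the base-changed Picard-Vessiot ring $R \otimes_F L$ contains a Picard-Vessiot ring over $L$ whose differential Galois group is exactly $G$. First I would choose $r$ large enough (depending on the transcendence degree and on the degree of the algebraic part of the extension $L/F$); since $L/F$ is finitely generated, I expect $r$ to be controlled by the number of algebraically independent generators together with the degree of the finite algebraic part. By hypothesis there is a Picard-Vessiot ring $R/F$ with $\underline{\Aut}^\del(R/F) = G^r$ (or a fixed faithful representation thereof, by Proposition~\ref{tannaka}). Set $E = \Frac(R)$. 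Inside $G^r$ there are $r$ natural copies of $G$, namely the coordinate projections, and more usefully the $r$ ``diagonal-complement'' or coordinate subgroups $H_i \leq G^r$ each isomorphic to $G^{r-1}$ (the kernel of the $i$-th projection). By Lemma~\ref{compositum}, the intersection $\bigcap_i H_i$ corresponds to the compositum of the fixed fields $E^{H_i}$, and by choosing the $H_i$ so that their intersection is trivial, the compositum of the $E^{H_i}$ is all of $E$.

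The key point is to find a single copy of $G$ sitting as a quotient $G^r \to G$ whose associated Picard-Vessiot ring becomes a Picard-Vessiot ring over $L$ after base change, with the constants not growing. The mechanism is Proposition~\ref{extending PVR}: if I can exhibit a subfield $E^H \subseteq E$, fixed field of a normal subgroup $H \trianglelefteq G^r$ with $G^r/H \cong G$, such that $L \otimes_F R_H$ is a domain with no new constants, then $L \otimes_F R_H$ is a Picard-Vessiot ring over $L$ with differential Galois group $G$, as desired. The subtlety is that $L$ may share some algebraic or transcendental structure with $E$, so the naive base change could either fail to be a domain or could acquire new constants or could cut the Galois group down to a proper subgroup. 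This is where largeness of $r$ enters: using the $r$ independent copies, at least one projection $G^r \to G$ must be ``disjoint'' from whatever $L$ contributes.

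To make the disjointness precise I would argue as follows. Write $L$ as a finite extension of a purely transcendental extension $F(x_1,\dots,x_m)$ of $F$; by Lemma~\ref{constants indep} the transcendence generators remain algebraically independent over $F$ and contribute no constants. The transcendental part causes no interaction with $E$ (since $E/F$ is the fraction field of a Picard-Vessiot ring and the $x_j$ are constants of $L$ hence differentially inert), so the real obstruction is the finite algebraic part. Here Lemma~\ref{intersection tensor} is the decisive tool: viewing the $r$ copies of $E^{G_i}$ (for the $r$ coordinate quotients) as finite extensions when restricted to the algebraic layer $E^{(G^r)^0}$, and taking $V$ to be the $F$-span of the algebraic part of $L$, the combinatorial dimension count forces at least one of the copies of $G$ to meet $L$ trivially once $r$ exceeds $\dim_F V$. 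For that copy, $L \otimes_F R_H$ remains a domain and regular over $K$, so Proposition~\ref{extending PVR} applies and yields $\Gal^\del(L \otimes_F R_H / L) = G$.

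The main obstacle I anticipate is bookkeeping the interaction between the transcendental and algebraic layers of $L$ simultaneously, and in particular verifying the ``no new constants'' condition after base change; separating these two layers (transcendence generators handled by Lemmas~\ref{constants indep} and the connected component via Lemma~\ref{closed}, finite algebraic part handled by the tensor-intersection Lemma~\ref{intersection tensor}) is what makes the argument go through. Once a good copy of $G$ is isolated, the verification that the hypotheses of Proposition~\ref{extending PVR} hold is essentially the regularity/linear-disjointness statement recorded after that proposition, so the conclusion follows.
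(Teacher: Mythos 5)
Your overall architecture matches the paper's: realize $G^r$ for $r$ controlled by $\trd(L/F)$ and the algebraic degree, pass to the coordinate quotients $G^r \to G$ with fixed fields $E_i$ whose compositum is $E$ (Lemma~\ref{compositum}), use Lemma~\ref{intersection tensor} to peel off copies of $G$ that interact with the finite algebraic part of $L$, and finish with Proposition~\ref{extending PVR}. Your treatment of the algebraic layer is essentially the paper's first step. But there is a genuine gap at the heart of your argument: you claim that ``the transcendental part causes no interaction with $E$ (since \ldots the $x_j$ are constants of $L$ hence differentially inert).'' This is false. Since $C_L = K = C_F$, every constant of $L$ already lies in $F$; hence no transcendence generator of $L$ over $F$ can be a constant, and Lemma~\ref{constants indep} says nothing about such generators (in the paper it is applied to \emph{new constants} of $\Frac(R_i\otimes_F L)$, not to generators of $L$). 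The transcendental overlap between $L$ and $E$ is in fact the hardest obstruction: for example, $L$ could itself be the fraction field of a Picard-Vessiot ring $R_1/F$ with group $G$, which is finitely generated over $F$ with $C_L=K$; then $\Frac(R_1 \otimes_F L)$ acquires a full coordinate ring's worth of new constants (by the torsor theorem), and the resulting group over $L$ is trivial, not $G$. Your proposal, as written, would attempt to base change such a copy and fail.

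What is missing is the paper's third step: a transcendence-degree counting argument showing that the new-constants phenomenon cannot occur for \emph{all} $2m$ indices simultaneously, where $m = \trd(L/F)+1$. Concretely, if every $\tilde E_i = \Frac(R_i \otimes_F L)$ had a constant transcendental over $K$, then each would satisfy $\trd(\tilde E_i/C_{\tilde E_i}L) \le \dim(G)-1$, so the compositum $\tilde E$ would satisfy $\trd(\tilde E/C_{\tilde E}L)\le 2m(\dim(G)-1)$; combining this with $\trd(C_{\tilde E}L/L) \le \trd(\tilde E/E) \le \trd(L/F) = m-1$ (this is where Lemma~\ref{constants indep} is actually used, applied to constants of $\tilde E$ over $E$) gives $\trd(E/F) \le 2m\dim(G)-2$, contradicting $\trd(E/F)=2m\dim(G)$. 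Without this step, or some substitute for it, you cannot verify the ``no new constants'' hypothesis of Proposition~\ref{extending PVR}, and the proof does not go through. Note also that the paper needs a second application of Lemma~\ref{intersection tensor} (to the algebraic closure $K'$ of $K$ in $F'\otimes_F L'$) before the counting argument can even start; your single application to the algebraic part of $L$ does not ensure that $K$ is algebraically closed in $F'\otimes_F L'$.
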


\begin{proof}
Let $L'$ be the algebraic closure of $F$ in $L$ and let $L''$ be the normal closure of $L'$ in $\bar{F}$. Set $d=[L'':F]$ and $m=\trd(L/F)+1$. Note that $1\leq d,m <\infty$. \medskip 

\noindent\textit{First step:} We show that there is a Picard-Vessiot ring $R/F$ with differential Galois group $\Autd(R/F)$ isomorphic to $G^{2m}$, and with the following property: If $F'$ denotes the algebraic closure of $F$ in $\Frac(R)$, then (1) $F'\otimes_F L'$ is a field in which (2) $K$ is algebraically closed.

By assumption, there is a Picard-Vessiot ring $R/F$ with differential Galois group $G^{2m+2d}$. We show that we can achieve (1) and (2) by replacing $R$ with a suitable subring.

Set $E=\Frac(R)$ and let $E_i$ be the fixed field $E^{G\times \dots \times G \times 1 \times G\times \dots \times G}$ (omitting the $i$-th factor) for $1 \leq i \leq 2m+2d$. 
Thus $C_E=C_{E_i}=K$.
 Since $G \times \dots \times G \times 1 \times G\times \dots \times G$ is a normal subgroup of $G^{2m+2d}$, the Galois correspondence implies that $E_i=\Frac(R_i)$ for a Picard-Vessiot ring $R_i/F$ with differential Galois group $\Autd(R_i/F)\cong G$ for each $i$. 

We let $F'\subseteq E$ be the algebraic closure of $F$ in $E$ and similarly let $F_i'\subseteq E_i$ the algebraic closure of $F$ in $E_i$. Then Lemma \ref{closed} implies $F'=E^{G^0\times \dots \times G^0}$ and $F_i'=E^{G\times \dots \times G \times G^0 \times G\times \dots \times G}$. By Lemma~\ref{compositum}, the compositum $F_1'\cdots F_{2m+2d}'$ equals $E^{G^0\times \dots \times G^0}=F'$. Again by Lemma~\ref{closed}, $[F':F]=|G(\bar{K})/G^0(\bar{K})|^{2m+2d}$ and  $[F_i':F]=|G(\bar{K})/G^0(\bar{K})|$ for each $1 \leq i \leq 2m+2d$. A dimension count yields 
\[F'\cong F_1'\otimes_F\cdots \otimes_F F_{2m+2d}'.\] 
Let $V=F'\cap L''$.  Thus $\dim(V) \le \dim(L'') = d$.
An application of Lemma~\ref{intersection tensor} yields $(F_1'\otimes_F\cdots \otimes_F F_{2m+d}')\cap L''=F$, possibly after renumbering the indices. 

We now change notation: we replace $E$ by $E^{1\times \cdots \times 1\times G \times \cdots \times G}$ (with $d$ copies of $G$ on the right); we replace $R$ with the unique Picard-Vessiot ring in $E^{1\times \cdots \times1 \times G \times \cdots \times G}$ (hence $\Autd(R/F)\cong G^{2m+d}$); and we replace $F'$ by the algebraic closure $F_1'\otimes_F\cdots \otimes_F F_{2m+d}'$ of $F$ in $E^{1\times \cdots 1\times G \times \cdots \times G}$. (Note that this tensor product is indeed equal to the relative algebraic closure, since both have the same degree over $F$ by Lemma~\ref{closed} and since the former field extension is contained in the latter.)
We similarly replace $E_i$ and $R_i$.  In this new situation,
$F'\cap L''=F$. As $L''/F$ is a finite Galois extension, $F' \cap L''=F$ implies that $F'\otimes_F L''$ is a field. In particular, $F'\otimes_F L'$ is a field which proves (1) for this choice of $R$.

To establish (2), let $K'$ be the algebraic closure of $K$ in the field $F'\otimes_F L'$. Thus $K'=C_{F'\otimes_F L'}$.  Moreover, $K'$ and $F'$ are
linearly disjoint over $K$, since $K'/K$ is algebraic whereas 
$K$ is algebraically closed in $F'$ (using $C_{F'}\subseteq C_E=K$).
Hence a $K$-basis of $K'$ is linearly independent over $F'$.  
Therefore, $[K':K]=[F'K':F']\leq [F'\otimes_FL':F']=[L':F]\leq d$ and thus $[K'L':L']\leq d$, where all composita are taken inside $F'\otimes_F L'$. Recall that $F'\cong F_1'\otimes_F\cdots \otimes_F F_{2m+d}'$, hence  \[F'\otimes_F L'\cong (F_1'\otimes_F L')\otimes_{L'}\dots\otimes_{L'} (F_{2m+d}'\otimes_F L'). \] 
Since the $L'$-vector space $V=L'K'$ has dimension at most $d$, 
by Lemma \ref{intersection tensor} we can relabel the indices such that \[(F_1'\otimes_F L')\otimes_{L'}\dots\otimes_{L'} (F_{2m}'\otimes_F L')\cap L'K'=L'.\] In particular $(F_1'\otimes_F L')\otimes_{L'}\dots\otimes_{L'} (F_{2m}'\otimes_F L')\cap K'=L'\cap K'=K,$ where the last equality uses that $C_L=K$. 

We again change notation, 
replacing $E$ by $E^{1\times \cdots \times 1 \times G \times \cdots \times G}$ (with $d$ copies of $G$ on the right) and replacing $R$, $F'$, $E_i$, and $R_i$ correspondingly as above.  In this new notation, we obtain that $F'\otimes_F L'$ is a field in which $K$ is algebraically closed. Note that $\Autd(R/F)\cong G^{2m}$ for this new $R$.  \medskip 

\noindent\textit{Second step:} We claim that if $R$ is as constructed in the first step and $E=\Frac(R)$, then $E\otimes_F L$ is an integral domain and $K$ is algebraically closed in $\tilde E=\Frac(E\otimes_F L)$. 

As $E/F'$ and $L/L'$ are regular, both $E\otimes_{F'}(F'\otimes_{F}L')$ and $(F'\otimes_{F}L')\otimes_{L'}L$ are regular over $F'\otimes_{F}L'$. Therefore, their tensor product over $F'\otimes_{F}L'$ is regular over $F'\otimes_{F}L'$ (\cite[Proposition V.17.3b]{Bourbaki}). But this tensor product is isomorphic to $E\otimes_F L$, and we conclude that $E\otimes_F L$ is a regular $(F'\otimes_{F}L')$-algebra. In particular, $E\otimes_F L$ is an integral domain and its fraction field $\tilde E$ is a regular extension of the field $F'\otimes_{F}L'$ (\cite[Proposition V.17.4]{Bourbaki}).
   
Recall that $K$ is algebraically closed in the field $F'\otimes_{F}L'$ (by the first step). 
But $F'\otimes_{F}L'$ is algebraically closed in $\tilde E$, as $\tilde E$ is regular over $F'\otimes_{F}L'$.  We conclude that $K$ is algebraically closed in ${\tilde E}$ which completes the second step. \medskip 

\noindent\textit{Third step:}  With notation as above,  for $1\leq i \leq 2m$
we define $\tilde R_i=R_i \otimes_F L$; this is an integral domain, being contained in $\tilde E$.  We also let $\tilde E_i=\Frac(\tilde R_i)$.  Thus $\tilde E_i$ is the compositum of $E_i$ and $L$ inside $\tilde E$. We claim that $C_{\tilde E_i}=K$ for some $i$.

Suppose to the contrary that $C_{\tilde E_i}$ strictly contains $K$ for all $1\leq i \leq 2m$. Then for each $i$, $\tilde E_i$ contains a constant that is transcendental over $K$ (since $K$ is algebraically closed in $\tilde E_i\subseteq \tilde E$ by the second step) and thus transcendental over $L$ (by Lemma \ref{constants indep}, using $C_L=K$). Therefore, $\trd(C_{\tilde E_i}L/L)\geq 1$ for all $i$ and thus \[\trd(\tilde E_i/C_{\tilde E_i}L)\le \trd(\tilde E_i/L)-1\le \trd(E_i/F)-1=\dim(G)-1,\] where the last equality follows from the fact that $R_i/F$ is a Picard-Vessiot ring with differential Galois group $G$. Base change from $C_{\tilde E_i}L$ to $C_{\tilde E}L$ then yields $\trd(\tilde E_iC_{\tilde E}/C_{\tilde E}L)\le \dim(G)-1$ for all $1 \leq i \leq 2m$. By Lemma \ref{compositum}, the compositum $E_1\cdots E_{2m}$ equals $E$. Therefore, $\tilde E$ equals the compositum of $\tilde E_1,\dots, \tilde E_{2m}$ inside $\tilde E$ and we conclude that 
\begin{equation}\label{eqtr1}
\trd(\tilde E/C_{\tilde E}L)\leq 2m(\dim(G)-1).
\end{equation}
 
On the other hand, any subset of $C_{\tilde E}$ that is algebraically independent over $K$ remains algebraically independent over $E$ by Lemma \ref{constants indep}, since $C_E=K$. Therefore, $\trd(C_{\tilde E}/K)=\trd(EC_{\tilde E}/E) \le \trd(\tilde E/E)$ and we obtain
 \begin{equation}\label{eqtr2}
\trd(C_{\tilde E}L/L)\le \trd(C_{\tilde E}/K) \le \trd(\tilde E/E)\le \trd(L/F)=m-1,
 \end{equation} where we used that $\tilde E$ is a compositum of $E$ and $L$ to obtain the last inequality.
Equations (\ref{eqtr1}) and (\ref{eqtr2}) together yield $\trd(\tilde E/L)\leq 2m\dim(G)-m-1$, and therefore 
\[\trd(E/F)\le\trd(\tilde E/F)\leq 2m\dim(G)-2. \] But $R/F$ is a Picard-Vessiot ring with differential Galois group $G^{2m}$, so $\trd(E/F)=2m\dim(G)$, a contradiction, which completes the third step. \smallskip 

{\em Conclusion of the proof:}
We conclude that for some $1\le i \le 2m$, $\tilde R_i=R_i \otimes_F L$ is an integral domain and $\tilde E_i=\Frac(\tilde R_i)$ satisfies $C_{\tilde E_i}=K$. By Proposition \ref{extending PVR}, $\tilde R_i$ is a Picard-Vessiot ring over $L$ with $\Autd(\tilde R_i/L)\cong\Autd(R_i/F)\cong G$. 
\end{proof}

A field of characteristic zero with a nontrivial derivation is always a transcendental extension of its field of constants (since derivations extend uniquely to separable algebraic extensions); hence we may find a copy of a rational function field within the given field. This gives the following

\begin{cor}\label{cor generalresult}
Let $L$ be a differential field that is finitely generated
over its field of constants $K\neq L$. Let $G$ be a linear algebraic group
defined over $K$ with the property that for any $r \in \N$, $G^r$ is
a differential Galois group over $K(x)$; here $K(x)$ denotes a rational function field with
derivation $d/dx$. Then $G$ is  a differential Galois
group over $L$.
\end{cor}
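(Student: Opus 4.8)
The plan is to deduce Corollary~\ref{cor generalresult} from Theorem~\ref{generalresult} by exhibiting a copy of a rational function field $K(x)$ with derivation $d/dx$ sitting inside the given differential field $L$, and then composing the two realization results. First I would observe that since $L$ is finitely generated over $K$ and $K \ne L$, the field $L$ carries a nontrivial derivation $\del$. Because derivations extend uniquely to separable (in particular, algebraic in characteristic zero) extensions, no element of the algebraic closure $\bar K$ of $K$ inside $L$ can acquire a nonzero derivative; thus $L$ must be a \emph{transcendental} extension of $K$. Pick any $x \in L$ with $\del(x) \ne 0$; such an $x$ is transcendental over $K$ (an algebraic element would have $\del(x)$ determined by $\del|_K = 0$, forcing $\del(x)=0$). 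This gives a differential subfield $K(x) \subseteq L$ on which $\del$ restricts.

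Next I would arrange the derivation to be the standard one. On $K(x)$ we have $\del = \del(x)\cdot d/dx$ with $\del(x) = a \in K(x)^\times$. By Lemma~\ref{change of derivation}, for any linear algebraic group $\Hcal \le \GL_n$ over $K$, being a differential Galois group over $(K(x), d/dx)$ is equivalent to being one over $(K(x), a\cdot d/dx) = (K(x), \del)$. Applying this to each power $G^r$: the hypothesis that $G^r$ is a differential Galois group over $(K(x), d/dx)$ for all $r$ transfers to the statement that $G^r$ is a differential Galois group over $(K(x), \del)$ for all $r$. Here I should note that Lemma~\ref{change of derivation} is phrased for a fixed faithful representation $\G \le \GL_n$, but combined with the remark following Definition~\ref{occurs} (that realizing $G$ realizes every faithful representation via Proposition~\ref{tannaka}), this yields the statement at the level of abstract groups, which is what Theorem~\ref{generalresult} requires.

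Finally I would apply Theorem~\ref{generalresult} with the base differential field taken to be $(K(x), \del)$ rather than $F$. I must verify its hypotheses: the constant field of $(K(x), \del)$ is $K$ (the derivation $a\cdot d/dx$ has constant field $K$, since $d/dx$ does and $a \ne 0$); the extension $L/K(x)$ is finitely generated (as $L$ is finitely generated over $K \subseteq K(x)$) with $C_L = K = C_{K(x)}$; and by the previous paragraph, $G^r$ is a differential Galois group over $(K(x),\del)$ for every $r \in \N$. Theorem~\ref{generalresult} then concludes that $G$ is a differential Galois group over $L$, as desired.

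The only genuinely substantive point, and the one I expect to require the most care, is the reduction of the derivation: ensuring that the hypothesis stated for $d/dx$ really yields the hypothesis $\del$ needs in Theorem~\ref{generalresult}, and confirming that $C_{(K(x),\del)} = K$ so that the constant fields match throughout. Everything else is a routine application of the uniqueness of extensions of derivations together with the two cited results.
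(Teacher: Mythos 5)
Your overall plan (find a copy of $K(x)$ inside $L$, adjust the derivation via Lemma~\ref{change of derivation}, then apply Theorem~\ref{generalresult}) is the same as the paper's, but there is a genuine gap at the first step. After picking $x \in L$ with $\del(x)\neq 0$, you assert that ``this gives a differential subfield $K(x)\subseteq L$ on which $\del$ restricts,'' with $\del(x)=a\in K(x)^\times$. Nothing justifies this: $\del(x)$ is merely an element of $L$, and in general it does not lie in $K(x)$. If $\del(x)\notin K(x)$, then $K(x)$ is not stable under $\del$, so it is not a differential subfield of $L$ at all, and your application of Lemma~\ref{change of derivation} on $K(x)$ (which requires the rescaling factor $a$ to lie in $K(x)^\times$) does not even make sense. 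For a concrete failure, take $L=K(u,v)$ with $\del(u)=1$ and $\del(v)=uv$; a short computation with Darboux polynomials shows $C_L=K$, so this $L$ satisfies all hypotheses of the corollary, yet the choice $x:=v$ gives $\del(v)=uv\notin K(v)$, so $K(v)$ is not a differential subfield. Since you allow ``any'' $x$ with $\del(x)\neq 0$, the argument breaks; and you offer no argument that some good choice of $x$ (one with $\del(x)\in K(x)$) exists, which in general is far from clear.

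The repair is to reverse the order of operations, and this is exactly what the paper does: rescale the derivation on \emph{all of} $L$ first, replacing $\del$ by $\del':=\del(x)^{-1}\del$. By Lemma~\ref{change of derivation} applied to $L$ (combined, as you note, with the remark after Definition~\ref{occurs} to pass between abstract groups and faithful representations), it suffices to realize $G$ over $(L,\del')$; the rescaling changes neither the constants ($C_{L,\del'}=C_{L,\del}=K$) nor finite generation. Now $\del'(x)=1\in K(x)$, so $K(x)$ genuinely is a differential subfield of $(L,\del')$, differentially isomorphic to $(K(x),d/dx)$, and the hypothesis on the groups $G^r$ applies to it verbatim --- your separate hypothesis-transfer step on $K(x)$ becomes unnecessary. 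Theorem~\ref{generalresult} then yields $G$ over $(L,\del')$, hence over $(L,\del)$. In short: the change of derivation must happen globally on $L$ before the subfield $K(x)$ is formed, not inside $K(x)$ afterwards.
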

\begin{proof}
Let $z  \in L$ be transcendental over $K$. By Lemma \ref{change of
derivation}, we may assume that $\del(z)=1$ by replacing $\del$ with
$\del(z)^ {-1}\cdot  \del$. Then $F=K(z)$ is a differential subfield of
$L$ differentially isomorphic to $K(x)$ and the claim follows from Theorem
\ref{generalresult}.
\end{proof}

Combining Theorem \ref{result} and Corollary \ref{cor generalresult}, we
obtain our main result:

\begin{thm}\label{mainresult}
Let $K=k((t))$ be a field of Laurent series over a field $k$ of characteristic zero. Let $L/K$ be a finitely generated field extension with a non-trivial derivation $\del$ on $L$ such that $C_L=K$. Then every linear algebraic group defined over $K$ is a differential Galois group over $L$. 
\end{thm}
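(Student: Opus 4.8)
The plan is to deduce the statement directly by combining Theorem~\ref{result} with Corollary~\ref{cor generalresult}; all of the substantive work has already been carried out in those two results, so what remains is to verify that their hypotheses align.

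First I would fix a linear algebraic group $G$ defined over $K = k((t))$ and aim to invoke Corollary~\ref{cor generalresult}. That corollary requires two inputs: that $L$ be finitely generated over its field of constants $K$ with $K \neq L$, and that $G^r$ be a differential Galois group over the rational function field $K(x)$ (with derivation $d/dx$) for every $r \in \N$. The first input is immediate: $L/K$ is finitely generated by hypothesis and $C_L = K$, and since $L$ carries a nontrivial derivation while derivations extend uniquely to separable algebraic extensions, $L$ must be transcendental over its field of constants, whence $K \neq L$.

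For the second input, I would observe that for each $r \in \N$ the direct power $G^r$ is again a linear algebraic group defined over $K$. Since $K$ is a Laurent series field over the characteristic zero field $k$, Theorem~\ref{result} applies verbatim: every linear algebraic group over $K$---in particular $G^r$---is a differential Galois group over $K(x)$ equipped with the derivation $d/dx$ (any nontrivial derivation with field of constants $K$ would serve, but $d/dx$ is the specific normalization demanded by the corollary). This supplies exactly the family of realizations that Corollary~\ref{cor generalresult} needs, and applying the corollary then yields that $G$ is a differential Galois group over $L$, completing the proof.

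I do not anticipate a genuine obstacle here, since the two hard ingredients are exactly the patching-plus-descent construction over $K(x)$ (Theorem~\ref{result}) and the Kovacic-trick descent from $G^r$ over $K(x)$ to $G$ over the finitely generated extension $L$ (Theorem~\ref{generalresult}, accessed through the corollary). The only point that warrants care is noticing that it is the power $G^r$, rather than $G$ itself, which must be realized over $K(x)$---this is precisely why Theorem~\ref{result} is stated for \emph{all} linear algebraic groups over $K$ and not merely for the given $G$, so that the same construction furnishes each $G^r$ at once.
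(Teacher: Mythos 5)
Your proposal is correct and matches the paper's own proof exactly: the paper proves Theorem~\ref{mainresult} by combining Theorem~\ref{result} (which realizes every linear algebraic group over $K$, hence each power $G^r$, over $K(x)$ with $d/dx$) with Corollary~\ref{cor generalresult}, just as you do. Your verification of the hypotheses, including that the nontrivial derivation forces $K \neq L$, is precisely the content the paper leaves implicit.
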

\end{subsection}
\end{section}

\medskip

\noindent {\bf Author Information:}

\medskip

\noindent David Harbater:
Department of Mathematics, University of Pennsylvania, Philadelphia, PA 19104-6395, USA;
email: harbater@math.upenn.edu
\medskip

\noindent Julia Hartmann:
Lehrstuhl f\"ur Mathematik (Algebra), RWTH Aachen University, 52056 Aachen, Germany;\\
current address:
Department of Mathematics, University of Pennsylvania, Philadelphia, PA 19104-6395, USA;
email: hartmann@math.upenn.edu

\medskip

\noindent Annette Maier:
Fakult\"at f\"ur Mathematik, Technische Universit\"at Dortmund, D-44221 Dortmund, Germany;
email: $\!$annette.maier@tu-dortmund.de

\medskip

\noindent The first author was supported in part by NSF grants DMS-0901164 and DMS-1265290, and NSA grant H98230-14-1-0145. 
The second author was supported by the German Excellence Initiative via RWTH Aachen University and by the German National Science Foundation (DFG).


\begin{thebibliography}{CHVdP13}

\bibitem[AM05]{amano-masuoka}
Katsutoshi Amano and Akira Masuoka.
\newblock Picard-{V}essiot extensions of {A}rtinian simple module algebras.
\newblock J. Algebra, \textbf{285} no.~2 (2005), 743--767.

\bibitem[And01]{Andre}
Yves Andr\'e.
\newblock Diff\'erentielles non-commutatives et th\'eorie de Galois diff\'erentielle ou aux diff\'erences.
\newblock Annales Scient. E.N.S. \ \textbf{34} (2001), 685--739.

\bibitem[AB94]{Anosov-Bolibrukh}
Dmitri Anosov and Andrei Bolibruch.
\newblock {\em The Riemann-Hilbert problem}. 
\newblock Aspects of Mathematics~\textbf{E22}. 
\newblock Friedr. Vieweg \& Sohn, Braunschweig, 1994.

\bibitem[Bou90]{Bourbaki}
Nicolas~Bourbaki.
\newblock {\em Algebra. {II}. {C}hapters 4--7}.
\newblock Elements of Mathematics (Berlin). Springer, Berlin, 1990.

\bibitem[BS64]{BorelSerre}
Armand~Borel and Jean-Pierre Serre.
\newblock Th\'eor\`emes de finitude en cohomologie galoisienne.
\newblock Comment. Math. Helv.\ \textbf{39} (1964), 111--164.

\bibitem[Car62]{Cartier}
Pierre Cartier.
\newblock Groupes alg\'ebriques et groupes formels.
\newblock Colloq. Th\'eorie des Groupes Alg\'ebriques (Bruxelles, 1962),  87--111.

\bibitem[CT00]{CT00}
Jean-Louis Colliot-Th\'el\`ene.
\newblock Rational connectedness and Galois covers of the projective line.
\newblock Ann.\ of Math.\ \textbf{151} (2000), 359--373.

\bibitem[CHvdP13]{CHP}
Teresa Crespo, Zbigniew Hajto and Marius van der Put.
\newblock Real and $p$-adic Picard-Vessiot fields.
\newblock 2013 manuscript. Available at arXiv:1307.2388.

\bibitem[Dyc08]{Dyc}
Tobias Dyckerhoff.
\newblock The inverse problem of differential {G}alois theory over the field
  $\mathbb{R}(z)$.
\newblock 2008 manuscript, available at arXiv:0802.2897.

\bibitem[Eps55a]{Epstein1}
Marvin P. Epstein.
\newblock  On the theory of Picard-vessiot extensions. 
\newblock Ann. of Math.~62 (1955), 528--547.

\bibitem[Eps55b]{Epstein2}
Marvin P. Epstein.
\newblock  An existence theorem in the algebraic study of homogeneous linear ordinary differential equations. Proc. Amer. Math. Soc.~\textbf{6} (1955), 33--41.

\bibitem[Gro71]{Grothendieck}
Alexander Grothendieck.
\newblock Rev\^etements \'etales et groupe fondamental. 
\newblock S\'eminaire de G\'eom\'etrie Alg\'ebrique (SGA) 1. 
\newblock Lecture Notes in Math.~\textbf{224}, Springer-Verlag, Berlin, Heidelberg and New York, 1971.

\bibitem[Har87]{HarGCAL}
David Harbater.
\newblock Galois coverings of the arithmetic line.
\newblock In {\em Number Theory:  New York, 1984-85}.  
Springer LNM, vol.\ 1240 (1987), 165--195.

\bibitem[Har03]{HarbaterMSRI}
David Harbater.
\newblock Patching and Galois theory.  
\newblock In: {\em Galois Groups and Fundamental Groups}
\newblock (L. Schneps, ed.), 
\newblock MSRI Publications series \ \textbf{41},
   Cambridge University Press (2003), 313--424.

\bibitem[HH10]{HH}
David Harbater and Julia Hartmann.
\newblock Patching over fields.
\newblock Israel J. Math., \textbf{176} (2010), 61--107.

\bibitem[HHK11]{HHKtorsor}
David Harbater, Julia Hartmann, and Daniel Krashen.
\newblock Local-global principles for torsors over arithmetic curves.
\newblock 2011 manuscript, available at: ArXiv: 1108.3323.  To appear 
in Amer.\ J.\ Math.

\bibitem[Har05]{HartCrelle}
Julia Hartmann.
\newblock On the inverse problem in differential Galois theory.
\newblock J.\ reine angew.\ Math.\ \textbf{586} (2005), 21--44

\bibitem[Har07]{Hartmann-OW}
Julia Hartmann.
\newblock Patching and differential Galois groups (joint work with David Harbater).
\newblock In: {\em Arithmetic and Differential Galois Groups}.
\newblock Oberwolfach reports~\textbf{4}, No.2, European Mathematical Society (2007).

\bibitem[Hru02]{Hrush}
Ehud Hrushovski.
\newblock  Computing the Galois group of a linear differential equation. 
\newblock {\em Differential Galois theory (B\c{e}dlewo, 2001)}, 97--138, Banach Center Publ., vol.~58, Polish Acad.\ Sci., Warsaw, 2002.

\bibitem[Kov69]{kovacic1}
Jerald Kovacic.
\newblock The inverse problem in the Galois theory of differential fields.
\newblock Ann. of Math.~\textbf{89} (1969), 583--608.

\bibitem[Kov71]{kovacic2}
Jerald Kovacic.
\newblock On the inverse problem in the Galois theory of differential fields.
\newblock Ann. of Math.~\textbf{93} (1971), 269--284.

\bibitem[MS96]{mitschisinger1}
Claudine Mitschi and Michael Singer.
\newblock Connected Linear Groups as Differential Galois Groups. 
\newblock J.\ Algebra \ \textbf{184} (1996), 333--361. 

\bibitem[MS02]{mitschisinger2}
Claudine Mitschi and Michael Singer.
\newblock Solvable-by-Finite Groups as Differential Galois Groups. 
\newblock Ann. Fac. Sci. Toulouse Math.~(6)\ \textbf{11/3} (2002), 403--423. 

\bibitem[MB01]{MB01}
Laurent Moret-Bailly.
\newblock Construction de rev\^etements de courbes point\'ees.
\newblock J.\ Algebra \textbf{240} (2001), 505--534.

\bibitem[Oor66]{Oort}
Frans Oort.
\newblock Algebraic group schemes in characteristic zero are reduced. 
\newblock Invent. Math.~\textbf{2} (1966), 79--80. 

\bibitem[Ple08]{Plemelj}
Josip Plemelj.
\newblock Riemannsche Funktionenscharen mit gegebener
Monodromiegruppe. 
\newblock Monatsh. Math. Phys.~\textbf{19} (1908), no.~1,
211--246.

\bibitem[Pop96]{Pop}
Florian Pop.
\newblock Embedding problems over large fields.
\newblock Ann.\ of Math.\ \textbf{144} (1996), 1--34.

\bibitem[Sei56]{seidenberg}
Abraham Seidenberg.
\newblock Contribution to the {P}icard-{V}essiot theory of homogeneous linear
  differential equations.
\newblock Amer. J. Math.\ \textbf{78} (1956), 808--818.

\bibitem[Sin93]{singer}
Michael Singer.
\newblock Moduli of Linear Differential Equations on the Riemann Sphere with Fixed Galois Groups.
\newblock Pac. J. of Math. \ \textbf{106}(2) (1993), 343--395. 

\bibitem[Spr09]{Springer}
Tonny Albert Springer.
\newblock {\em Linear algebraic groups}.
\newblock Birkh\"auser Boston Inc., second edition, 2009.

\bibitem[TT79]{tretkoff}
Carol Tretkoff and Marvin Tretkoff.
\newblock Solution of the Inverse Problem of Differential Galois Theory in the Classical Case.
\newblock Amer. J.~Math.\ \textbf{101}, No. 6 (1979), 1327--1332.

\bibitem[vdPS03]{singervanderput}
Marius van~der Put and Michael~F.\ Singer.
\newblock {\em Galois theory of linear differential equations}.
\newblock Springer, Berlin, 2003.

\bibitem[V{\"o}l96]{Voelklein}
Helmut V{\"o}lklein.
\newblock {\em Groups as {G}alois groups: An introduction}. Volume~53 of {\em Cambridge Studies
  in Advanced Mathematics}.
\newblock Cambridge University Press, Cambridge, 1996.


\end{thebibliography}
\end{document}